\newcommand{\R}{\mathbb R}
\newcommand{\N}{\mathbb N}
\newcommand{\ms}{\mathcal{MS}}
\newcommand{\meno}{\!\setminus\!}
\renewcommand{\div}{{\rm div}}
\newcommand{\dx}{\;\mathrm{d}x}
\newcommand{\h}{\mathcal{H}^1}
\renewcommand{\dh}{\;\mathrm{d}\mathcal{H}^1}
\newcommand{\dc}{\;\mathrm{d}\mathcal{H}^0}
\newcommand{\supp}{{\rm supp\,}}
\newcommand{\po}{\partial\Omega}
\newcommand{\pdo}{\partial_{D}\Omega}
\newcommand{\g}{\Gamma}
\renewcommand{\o}{\Omega}
\renewcommand{\ng}{\nabla_{\Gamma}}
\newcommand{\p}{\varphi}
\newcommand{\diffc}{\mathcal{D}_{\delta}(\o,U)}
\newcommand{\id}{\mathrm{Id}}
\theoremstyle{plain}
\newtheorem{theorem}{Theorem}[section]
\newtheorem{lemma}[theorem]{Lemma}
\newtheorem{proposition}[theorem]{Proposition}
\theoremstyle{definition}
\newtheorem{definition}[theorem]{Definition}
\newtheorem{remark}[theorem]{Remark}
\theoremstyle{remark}
\mathchardef\emptyset="001F
\numberwithin{equation}{section}
\title[Local minimality of triple points]{A second order local minimality criterion for the triple junction singularity of the Mumford-Shah functional.}
\author{R.\ Cristoferi}
\begin{document}

\maketitle

\begin{abstract}
This paper is the first part of an ongoing project aimed at providing a local minimality criterion, based on a second variation approach, for the triple point configurations of the Mumford-Shah functional.
\end{abstract}

\section{Introduction}

The importance of the Mumford-Shah functional (introduced in \cite{MS1, MS2} in the context of image segmentation) lies on the fact that it is a prototype for the class of variational problems that are commonly called \emph{free discontinuity problems}. These problems are characterized by a competition between volume and surface energy, and arise in many physical models (for example, in fracture mechanics).

The (homogeneous) \emph{Mumford-Shah functional} in the plane is defined as follows:
\begin{equation}\label{eq:ms}
\ms(u,\g):=\int_{\o\,\meno\,\g} |\nabla u|^2\dx + \h(\g\cap\o)\;,
\end{equation}
where $\Omega\subset\R^2$ is a $C^1$ domain, $\h$ denotes the $1$-dimensional Hausdorff measure, and $(u,\g)$ is a pair where $\g$ is a closed subset of $\R^2$ and $u\in H^1(\o\meno\g)$.

The existence of global minimizers in arbitrary dimension has been provided by De Giorgi, Carrieo and Leaci in \cite{DegCarLea}
(for other proof see, for instance, \cite{MadSol} and, for dimension $2$, \cite{DMMorSol, MorSol})
In the seminal paper \cite{MS2} it has been conjectured that, if $(u,\g)$ is a minimizing pair, then the set $\g$ is made by a finite union of $C^1$ arcs. Given this structure for granted, it is not difficult to prove (see \cite{MS2}) that the only possible singularities of the set $\g$ can be of the following two types:
\begin{itemize}
\item $\g$ ends at an interior point (the so called \emph{crack-tip}),
\item three regular arcs $\g^1, \g^2, \g^3$ meeting at an interior point $x_0$ with equal angles of $2\pi/3$ (the so called \emph{triple point}).
\end{itemize}
Although several results on the regularity of the discountinuity set $\g$ have been obtained (but we will not recall them here), the conjecture is still open.\\

The deep lack of convexity of the functional \eqref{eq:ms} naturally leads one to ask what conditions imply that critical configurations as above are local minimizers. The study of such a conditions has been initiated by Cagnetti, Mora and Morini in \cite{CagMorMor}, where they deal with the regular part of the discontinuity set. In particular they introduce a suitable notion of second variation and  prove that the strict positivity of the associated quadratic form  is a sufficient condition for the local minimality with respect to small $C^2$ perturbations of the discontinuity set $\g$.
Subsequently, the above result has been strongly improved by Bonacini and Morini in \cite{BonMor}, where it is shown that  if $(u, \Gamma)$ is a critical pair for \eqref{eq:ms} with strictly positive second variation, then it locally minimizes the functional with respect to small $L^1$-perturbations of $u$, namely there exists $\delta>0$ such that 
$$
MS(u, \Gamma)<MS(v, \Gamma')
$$
for all admissible pairs $(v,\Gamma')$ satisfying $0<\|u-v\|_{L^1}\leq \delta$.

Among other results on local and global minimality criteria, we would like to recall the important work \cite{AlbBouDM} of Alberti, Bouchitt\'{e} and
Dal Maso, where they introduce a general calibration method for a family of non convex variational problems. In particular they apply this method to the case of the Mumford-Shah functional to obtain minimality results for some particular configurations.
Moreover, Mora in \cite{Mora} used that calibration technique to prove that a critical configuration $(u,\g)$, where $\g$ is made by three line segments meeting at the origin with equal angles, is a minimizer of the Mumford-Shah energy in a suitable neighborhood of the origin, with respect to its Dirichlet boundary conditions.
Finally, we recall that the same method has been used by Mora and Morini in \cite{MorMor}, and by Morini in \cite{Mor} (in the case of the non homogeneous Mumford-Shah functional), to obtain local and global minimality results in the case of a regular curve $\g$.\\

Our aim is to continue the investigation of second order sufficient conditions, by considering for the first time the case of a singular configuration (the triple point configuration).
For the area functional, a general approach to treat the case of singularities appears for the first time in the works by Cicalese, Leonardi and Maggi (see \cite{CicLeoMag} and \cite{CicLeoMag2} for the application to the stability of the planar double bubble), and in the case of higher dimensions by Leonardi and Maggi (see\cite{CicLeoMag3}).

The plan is the following. In Section \ref{sec:variations} we compute, as in \cite{CagMorMor}, the second variation of the functional $\ms$ at a triple point configuration $(u,\g)$, with respect to a one-parameter family of (sufficiently regular) diffeomorphisms $(\Phi_t)_{t\in(-1,1)}$, where each $\Phi_t$  equals the identity in the part of $\po$ where we impose the Dirichlet condition and $\Phi_0=\id$. 
The idea is then to consider  for each time $t\in(-1,1)$ the pair $(u_t, \g_t)$, where $\g_t:=\Phi_t(\g)$, and $u_t\in H^1(\o\meno \g_t)$ minimizes the Dirichlet energy with respect to the given boundary conditions.
.
We show that the second variation can be written as follows:
\begin{equation}\label{secvarintro}
{\frac{\mathrm{d}^2}{\mathrm{d}t^2} {\ms(u_t,\g_t)}}_{|t=0}=
                     \partial^2\ms(u,\g)\bigl[ (X\cdot\nu^1,X\cdot\nu^2,X\cdot\nu^3) \bigr]+R\,,
\end{equation}
where $\partial^2\ms(u,\g)$ is a nonlocal (explicitly given) quadratic form, $X$ is the velocity field at time $0$ of the flow $t\mapsto \Phi_t$ (see Definition \ref{def:family}), and $\nu^i$ is the normal vector field on $\g^i$. Moreover, the remainder  $R$ vanishes whenever $(u,\g)$ is a critical triple point. Thus, in particular, if $(u,\g)$ is a local minimizer with respect to smooth perturbations of $\g$, then  the quadratic form $\partial^2\ms(u,\g)$ has to be non-negative. 

Next we address the question as to whether the strict positivity of $\partial^2\ms(u,\g)$, with $(u, \g)$ critical, is a sufficient condition for local minimality. The main result (see Theorem~\ref{thm:minC2}) is the following:  if $(u,\g)$ is a strictly stable critical pair, then there exists $\delta>0$ such that 
$$
\ms(u,\g)<\ms(v,\Phi(\g))\,,
$$
for any $W^{2,\infty}$-diffeomorphism $\Phi:\bar{\o}\rightarrow\bar{\o}$ and any function $v\in H^1\bigl(\o\meno\Phi(\g)\bigr)$ satisfying  the proper boundary conditions, provided that $\|\Phi-\id\|_{W^{2,\infty}}\leq \delta$, and $\Phi(\g)\neq\g$.
The above result can be seen as the analog for triple points configurations of the minimality result established in \cite{CagMorMor} in the case of regular discontinuity sets. 

From the technical point of view the presence of the singularity makes the problem considerably more challenging. 
The main  difficulty lies in the construction of a suitable family of bijections $(\Phi_t)_{t\in[0,1]}$ connecting the critical triple point configuration with the competitor, in such a way that the ``tangential'' part along $\Phi_t(\Gamma)$ of the velocity field $X_t$  of the flow $t\mapsto \Phi_t$ is controlled by its normal part. Moreover, one also has to make sure that the $C^2$-closeness to the identity is preserved along the way. 
This turns out to be a challenging task, due to the presence of the triple junction which poses nontrivial regularity problems.
This technical difficulty has been first addressed in \cite{CicLeoMag}, where the authors solved the problem of \emph{re-parametrizing} a diffeomorphism between two curves in order to obtain a control of the tangential part of the new diffeomorphism on the curve with its normal one. Such a control is fundamental when one aims at using a second variation approach. In our case, the presence of the volume term prevent us to use directly the result in \cite{CicLeoMag}, but forces us to use the strategy described below (formula \eqref{connection}), for which different estimates, not explicitly written in the above work, are needed. For reader's convenience, we present here a construction that is similar in spirit, but independent, to the one provided by Cicalese, Leonardi and Maggi, where the relevant features that allow to obtain the estimates are explicitly pointed out.
Once such a construction is performed, one proceeds in the following way. Let $g(t):=\ms(u_t,\g_t)$ and notice that, by criticality, we have $g'(0)=0$. Thus, recalling \eqref{secvarintro}, it is possible to write
\begin{align}
\ms(v,\Phi(\g))\,-&\,\ms(u,\g) = \int_0^1 (1-t)g''(t)\,\mathrm{d}t\nonumber\\
=& \int_0^1 (1-t)\bigl(\partial^2\ms(u_t,\g_t)[X_t\cdot\nu_t]+R_t \bigr) \,\mathrm{d} t.\label{connection}
\end{align}
If $\g_t$ is sufficiently $C^2$-close to $\g$, by the strict positivity assumption on $\partial^2\ms(u,\g)$, we may conclude by continuity that 
$$
\partial^2\ms(u_t,\g_t)[X_t\cdot\nu_t]\geq C\|X_t\cdot\nu_t\|^2\,.
$$
Unfortunately,  the remainder $R_t$ depends also on the tangential part of $X_t$. However, if  the family $(\Phi_t)_t$ is properly constructed, on can ensure that such a tangential part is controlled by $X_t\cdot\nu_t$ and  
$$
|R_t|\leq \varepsilon\|X_t\cdot\nu_t\|^2
$$
for any $\varepsilon>0$, provided that the $\Phi_t$'s are sufficiently $C^2$-close to the identity. Plugging the above two estimates into \eqref{connection} one eventually concludes that, for a $\Phi$'s satisfying the above assumptions, $\ms(v,\Phi(\g))>\ms(u,\g)$. 

We conclude this introduction by observing that the above result represents just the first step of a more general strategy aimed at 
establishing the local minimality with respect to the $L^1$-topology in the spirit of \cite{BonMor}, which will be the subject of future investigations.\\


\section{Setting}

Here we collect the terminology and we introduce all the objects we will need in the rest of the paper.
First of all, we need to specify the class of triple points we are interested in.

\begin{definition}\label{def:triple}
We say that a pair $(u,\g)$ is a \emph{(regular admissible) triple point} if
\begin{itemize}
\item $\Gamma=\{x_0\}\cup\Gamma^1\cup\Gamma^2\cup\Gamma^3$, where $x_0\in \Omega$ and the $\Gamma^i$'s are three disjoint simple open\footnote{By an \emph{open} curve we mean a curve $C:I\rightarrow\R^2$, where $I\subset\R$ is an open interval.} curves in $\o$ that are of class $C^3$ and $C^{2,\alpha}$ up to their closure. We also suppose $\partial\g_i=\{x_0, x_i\}$, where $x_0\in\o$ and $x_i\in\partial\o$ with $x_i\neq x_j$ for $i\neq j$,
\item denoting by $\nu_i$ the normal vector to $\g_i$, we require the angle between $\nu_i(x_0)$ and $\nu_{j}(x_0)$ to be less than or equal to $\pi$, for all $i\neq j$ (for the choice of the parametrization on each $\gamma_i$, see Section \ref{sec:prel}),
\item each $\bar{\g}^i$ to do not intersect $\partial\o$ tangentially,
\item there exists $\pdo\subset\subset\po\meno\g$, relatively open in $\po$, such that $u$ solves
\begin{equation}\label{eq:u}
\int_{\Omega\,\meno\,\Gamma} \nabla u \cdot \nabla z\dx=0\,,
\end{equation}
for every $z\in H^1(\o\meno\g)$ with $z=0$ on $\pdo$.
\end{itemize}
\end{definition}

\begin{remark}
The regularity we impose on the curves $\g^i$'s is not so restrictive as it may seem: indeed we will work with critical triple points (see Definition
~\ref{def:crit}), and it was proved in \cite{KocLeoMor} that, for critical configurations, each $\g_i$ is analytic as soon as it is of class $C^{1,\alpha}$, and the regularity theory tells us that each curve is of class $C^{2,\alpha}$ up to its closure.
We would like to point out that the assumption that each curve $\g_i$ is open has been made just for convenience, and does not prevent the use of $(u,\g)$, with $u\in H^1(\o\meno\g)$, as an admissible pair at which to compute the functional $\ms$.
\end{remark}

The last condition in Definition \ref{def:triple} tells us that $u$ is a weak solution of
$$
\left\{
\begin{array}{ll}
\triangle v = 0 & \text{ in } \o\meno\bar{\g}\,,\\
v = u & \text{ on } \pdo\,,\\
\partial_{\nu_{\partial\o}} v = 0 & \text{ on } \partial\o\meno\pdo\,,\\
\partial_{\nu} v = 0 & \text{ on } \bar{\g}\,.\\
\end{array}
\right.
$$
From the results on elliptic problems in domains with corners (see, \emph{e.g.}, \cite{Gris}) and from the regularity of $\partial\o$, we know that $u$ can have a singularity near $\mathcal{S}$, the relative boundary of $\pdo$ in $\partial\o$: namely, $u$ can be $H^1$ but not $H^2$ in a neighborhood of $\mathcal{S}$.
Thus, the gradient of $u$ may not be bounded in that region. In a future application of the present work we will need to impose a bound on the $L^\infty$-norm of the gradient of the admissible competitors. But this can be done only far from $\mathcal{S}$. So, we are forced to consider competitors equals to $u$ in a neighborhood of $\mathcal{S}$.

\begin{definition}
Given a regular triple point $(u,\g)$, we say that an open set $U\subset\Omega$ is an \emph{admissible subdomain} if $\g\subset U$ and $\overline{U}\cap S=\emptyset$.
In this case we define
$$
\ms\bigl((u,\g);U\bigr) := \int_{U\,\meno\,\g} |\nabla u|^2\dx + \h(\g)\;.
$$
Moreover, given an open set $A\subset\o$, we denote by $H^1_U(A)$ the space of functions $z\in H^1(A)$ such that $z=0$ on $(\o\meno U)\cup \pdo$.
\end{definition}

\noindent
\textbf{Notation:} we will call $\o^1, \o^2, \o^3$ the three open connected components of $\Omega\meno\bar{\g}$.\\

Our strategy requires to perform the first and the second variation of our functional $\ms$. So, we need to specify the perturbations of the set $\g$ and of the function $u$ we want to consider.

\begin{definition}\label{def:family}
Let $(u,\g)$ be a triple point and let $U$ be an admissible subdomain.
We say that a family of bijections of $\overline{\o}$ onto itself, $(\Phi_t)_{t\in(-1,1)}$, is \emph{admissible} for $(u,\g)$ in $U$ if
the following conditions are satisfied:
\begin{itemize}
\item $\Phi_0$ is the identity map $\id$,
\item $\Phi_t=\id$ in $(\o\meno U)\cup\pdo$, for each $t\in(-1,1)$,
\item $\Phi^i_t:=(\Phi_t)_{|\o^i}$ is a diffeomorphism of class $C^1$, for each $t\in(-1,1)$,
\item $\Phi_t$ is of class $C^3$ on $\g$, for each $t\in(-1,1)$,
\item for each $x\in\bar{\o}$, the map $t\rightarrow\Phi_t(x)$ is of class $C^2$.
\end{itemize}
In this case, we define:
$$
X_{\Phi_t}:=\dot{\Phi}_t\circ\Phi_t^{-1}\,,\quad\quad\quad Z_{\Phi_t}:=\ddot{\Phi}_t\circ\Phi_t^{-1}\,,
$$
where with $\dot{\Phi}_t$ we denote the derivative with respect to the variable $s$ of the map $(s,x)\rightarrow \Phi_s(x)$ computed at $(t,x)$. Notice that the above objects are well defined.
Moreover we also introduce the following abbreviations
$$
X_t:=X_{\Phi_t}\,,\quad\quad Z_t:=Z_{\Phi_t}\,,\quad\quad X:=X_0\,,\quad\quad Z:=Z_0\,,
$$
where no risk of confusion can occur.
\end{definition}

\begin{remark}
Usually the variations that are considered are $C^3$ diffeomorphisms of $\bar{\o}$ for every fixed $t$, and functions of class $C^2$ for every fixed $x$. The reason why we need to consider this weaker class of admissible functions is because in the construction we will provide in Proposition \ref{prop:x}, the regularity we will be able to prove is the one of the above definition. However, the above hypotheses on
$(\Phi_t)_t$ suffice to be able to compute the first and the second variation of the functional $\ms$.
\end{remark}

The above variations will affect only the set $\g$, \emph{i.e.}, at every time $t$ we will consider the set $\g_t:=\Phi_t(\g)$. Since our functional depends also on a function $u$, we have to choose, for each time $t$, a suitable function $u_t$ related to the set $\g_t$ at which compute our functional $\ms$. The idea, as in \cite{CagMorMor}, is to choose the function that minimizes the Dirichlet energy.

\begin{definition}\label{def:umod}
Let $\Phi:\bar{\o}\rightarrow\bar{\o}$ be a diffeomorphism such that $\Phi=\id$ on $(\o\meno U)\cup\pdo$, and set $\g_\Phi:=\Phi(\g)$. We define $u_{\Phi}$ as the unique solution of:
$$
\left\{
\begin{array}{ll}
\int_{\o\,\meno\,\g_{\Phi}} \nabla u_{\Phi} \cdot \nabla z\dx = 0 & \mbox{for each } z\in H^1_U(\o\meno\g_\Phi)\,, \\
u_{\Phi} = u & \mbox{in } (\o\meno U)\cup \pdo\,, \\
u_{\Phi} \in H^1(\o\meno\g_\Phi)\,.
\end{array}
\right.
$$
Moreover, given a family of admissible diffeomorphisms $(\Phi_t)_t$, we set $u_t:=u_{\Phi_t}$, and we define the function $\dot{u}_t(x)$ as the derivative with respect to the variable $s$ of the map $(s,x)\mapsto u_s(x)$, computed in $(t,x)$. For simplicity, set $\dot{u}:=\dot{u}_{0}$. 
\end{definition}

We are now in position to describe the admissible variations.

\begin{definition}\label{def:var}
We define the \emph{first} and the \emph{second variation} of the functional $\ms$ at a regular admissible triple point $(u,\g)$ in $U$, with respect to the family of admissible diffeomorphisms $(\Phi_t)_{t\in(-1,1)}$, as
$$
\frac{\mathrm{d}}{\mathrm{d}t} {\ms\bigl((u_t,\g_t);U\bigr)}_{|t=0}\,,\quad\quad\quad \frac{\mathrm{d}^2}{\mathrm{d}t^2} {\ms\bigl((u_t,\g_t);U\bigr)}_{|t=0}\,,
$$
respectively.
\end{definition}


\section{Preliminary results}\label{sec:prel}

\subsection{Geometric preliminaries} We collect here some geometric definitions and identities that will be useful later.
First of all, we will use the following matrix notation: if $A:\R^2\rightarrow\R^2$ and $v_1,v_2\in\R^2$, we set
$$
A[v_1,v_2]:=A[v_1]\cdot v_2\,.
$$

Let $\gamma\subset\R^2$ be a curve of class $C^2$ and let $\tau:\gamma\rightarrow\mathbb{S}^1$ be the tangent vector field on $\gamma$. Given an orientation on $\gamma$ it is possible to define a signed distance function from $\gamma$ as follows:
$$
\mathrm{d}_\gamma(x+t\nu(x)):=t\,,
$$
where $\nu(x)$ is the normal vector to $\gamma$ at the point $x$, obtained by rotating $\tau(x)$ counterclockwise. This signed distance turns out to be of class $C^2$ in a tubular neighborhood
$\mathcal{U}$ of $\gamma$; moreover, its gradient coincides with $\nu$ on $\gamma$. In the following we will use the extension of the normal vector field given by the gradient of the signed distance from $\gamma$, that we will set $\nu:\mathcal{U}\rightarrow\mathbb{S}^1$.  

Given a smooth vector field $g:\mathcal{U}\rightarrow\R^k$, we define the \emph{tangential differential} $D_{\gamma}g(x)$ at a point $x\in\gamma$ ($\nabla_\gamma g(x)$ if $k=1$) by
$D_{\gamma}g(x):=dg(x)\circ\pi_x$, where $dg(x)$ is the classical differential of $g$ at $x$ and $\pi_x$ is the orthogonal projection on $T_x\gamma$, the tangent line to $\gamma$ at $x$. If $g:\mathcal{U}\rightarrow\R^2$ we define its \emph{tangential divergence} as $\div_{\gamma}g:=\tau\cdot\partial_{\tau}g$.

We define the \emph{curvature} of $\gamma$ as the function $H:\mathcal{U}\rightarrow\R$ given by $H:=\div\nu$. Notice that, since $\partial_{\nu}\nu=0$ on $\g$, we can write $H=\div_{\gamma}\nu=D\nu[\tau,\tau]$. \\
For every smooth vector field $g:\mathcal{U}\rightarrow\R^2$ the following \emph{divergence formula holds}:
\begin{equation}\label{eq:curv}
\int_{\gamma} \div_{\gamma}g\dh = \int_{\gamma} H(g\cdot\nu)\dh+\int_{\partial\gamma}g\cdot\eta\dc\,,
\end{equation}
where $\eta$ is a unit tangent vector pointing out of $\gamma$ in each point of $\partial\gamma$.
Moreover, if $\Phi:\mathcal{U}\rightarrow\mathcal{U}$ is an orientation preserving diffeomorphism, and we denote by $\gamma_{\Phi}:=\Phi(\gamma)$, a possible choice for the orientation  of $\gamma_{\Phi}$ is given by:
$$
\nu_{\Phi}:=\frac{(D\Phi)^{-T}[\nu]}{\bigl| (D\Phi)^{-T}[\nu] \bigr|}\circ\Phi^{-1}\,.
$$
In this case, the vector $\eta$ of the divergence formula \eqref{eq:curv} becomes
$$
\eta_{\Phi}:=\frac{D\Phi[\eta]}{\bigl| D\Phi[\eta] \bigr|}\circ\Phi^{-1}\,.
$$
In particular, for an admissible flow $(\Phi_t)_{t\in(-1,1)}$, we will use the following notation: $\nu_t:=\nu_{\Phi_t}$, $\eta_t:=\eta_{\Phi_t}$, and we will denote by $H_t$ the curvature of $\gamma_t$.

Finally, setting $J_{\Phi}:=\bigl| (D\Phi)^{-T}[\nu] \bigr|\det D\Phi$, for every $f\in L^{1}(\gamma_{\Phi})$ the following \emph{area formula} holds (see \cite[Theorem~2.91]{AFP}):
$$
\int_{\gamma_{\Phi}} f\dh = \int_{\gamma} (f\circ\Phi)\, J_{\Phi}\dh\,.
$$
We now treat triple points. Fix for $\partial\o$ the clockwise orientation and orient the curves $\g_i$'s in such a way that $\nu^i(x_i)=\tau_{\partial\o}(x_i)$ for each $i=1,2,3$ (see Figure ~\ref{fig:triple}), where $\nu^i$ is the normal vector on $\g_i$.

\begin{figure}[H]
\includegraphics[scale=0.7]{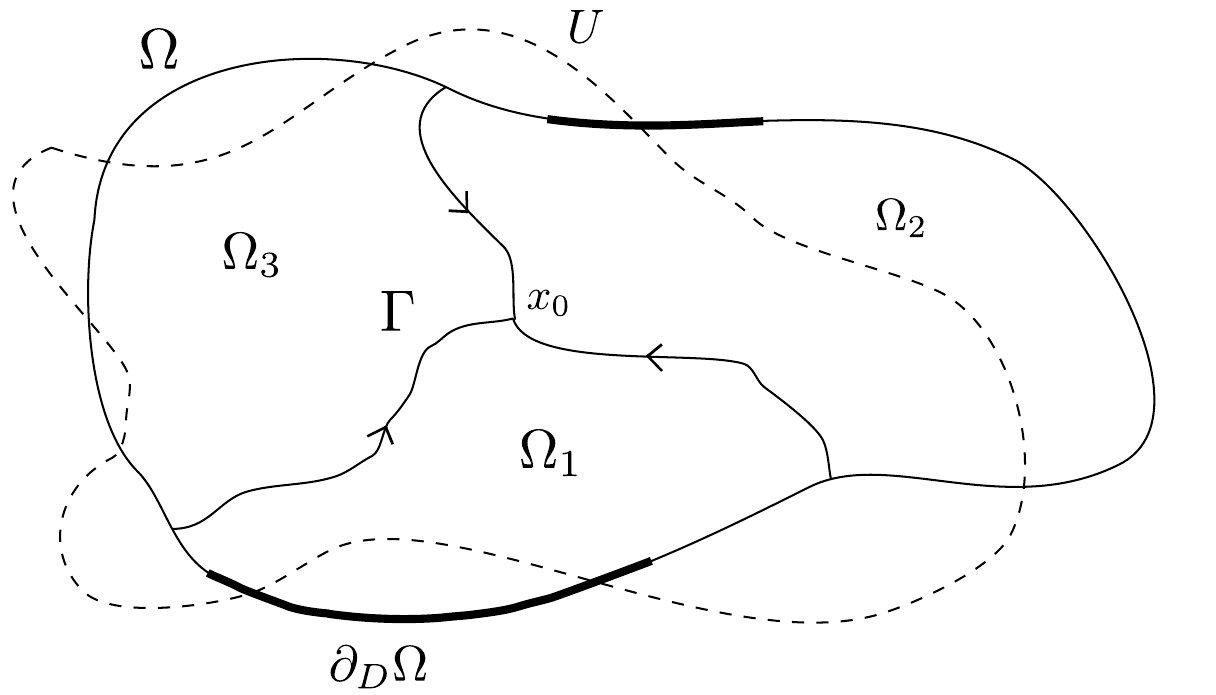}
\caption{An admissible triple point with the chosen orientation and an admissible subdomain $U$. The bold part of the boundary represents the set $\partial_D\Omega$.}
\label{fig:triple}
\end{figure}

For the sake of simplicity we will use the following notation: given $f:\Gamma\rightarrow\R^k$, we will denote by $f_i$ its restriction to $\Gamma_i$, and we will write
$$
\int_{\partial\Gamma} f\dc:=\sum_{i=1}^3 \bigl( f_i(x_0) + f_i(x_i) \bigr)\,.
$$

In the following we will also need to use the trace of a function on $\g$. We recall that, since each $\g_i$ is open, $x_i\not\in\g_i$, for each $i=0,1,2,3$.

\begin{definition}
Let $\g$ be a regular admissible triple point, and let $z\in H^1(\o\meno\bar{\g})$. We define the \emph{traces} $z^+, z^-$
of $z$ on $\g$ as follows: let $x\in\g$ and define
$$
z^{\pm}(x):=\lim_{r\rightarrow0^+}\frac{1}{\bigl|  B_r(x)\cap V_x^{\pm} \bigr|}\int_{B_r(x)\cap V_x^{\pm}} z(y)\,\mathrm{d}y\,,
$$
where $V_x^{\pm}:=\{ y\in\R^2 \,:\, \pm(y-x)\cdot\nu^i(x)\geq0 \}$, if $x\in\g_i$.\\
\end{definition}

In the computation of the second variation we will need some geometric identities, that we collect in the following lemma.
The proofs of the first block of identities are the same as those of \cite[Lemma~3.8]{CagMorMor}, and hence we will not repeat them here.
We just need to prove the last three.

\begin{lemma}\label{lem:geom}
The following identities hold on each $\g^i$:
\begin{enumerate}
\item $D^2 u^{\pm}[\nu^i,\nu^i] = -\triangle_{\g_i}u^{\pm}$;
\item $D^2 u^{\pm}[X,\nu^i] = -(X\cdot\nu^i)\triangle_{\g_i}u^{\pm}-D\nu^i[\nabla_{\g^i} u^{\pm}, X]$;
\item $\div_{\g_i}[(X\cdot\nu^i)\nabla_{\g^i} u^{\pm}]=(D_{\g^i}X)^{T}[\nu^i,\nabla_{\g^i} u^{\pm}]-\nabla^2 u^{\pm}[X,\nu^i]$;
\item $\partial_{\nu^i}H^i=-|D\nu^i|^2=-(H^i)^2$;
\item $D^2 u^{\pm}[\nu^i,\nabla_{\g^i} u^{\pm}]=-D\nu^i[\nabla_{\g^i} u^{\pm}, \nabla_{\g^i} u^{\pm}]=-H_i|\nabla_{\g_i} u^{\pm}|^2$;
\item $\dot{\nu}^i=-(D_{\g^i}X)^{T}[\nu^i]-D_{\g^i}\nu^i[X]=-\nabla_{\g}(X\cdot\nu)$;
\item $\frac{\partial}{\partial t}{\bigl(\dot{\Phi}_t\cdot(\nu^i_t\circ\Phi_t)J_{\Phi_t}\bigr)}_{|t=0}= Z\cdot\nu^i-2X^{||}\cdot\nabla_{\g^i}(X\cdot\nu^i)+D\nu^i[X^{||},X^{||}]+\div_{\g^i}((X\cdot\nu^i)X)$.
\end{enumerate}
Moreover, the following identities are satisfied:
\begin{enumerate}
\item[(i)] $\frac{\partial}{\partial t}{(\eta^i_t\circ\Phi_t)}_{|t=0}=(D_{\g_i}X)^{T}[\nu^i,\eta^i]\nu^i$, on $\partial\g^i$;
\item[(ii)] $X\cdot\frac{\partial}{\partial t}{(\eta^i_t\circ\Phi_t)}_{|t=0}=-(X\cdot\nu^i)\dot{\nu}^i\cdot\eta^i-H^i(X\cdot\nu^i)(X\cdot\eta^i)$, on $\partial\g^i$;
\item[(iii)] $Z\cdot\nu_{\po}+D\nu_{\po}[X,X]=0$ on $\partial\g^i\cap\po$.
\end{enumerate}
\end{lemma}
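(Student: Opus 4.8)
The plan is to prove every identity by a pointwise computation on $\g^i$, resting on three ingredients: the Euler--Lagrange information on $u$ (namely $\triangle u^\pm=0$ in $\o\meno\bar\g$ together with $\partial_{\nu^i}u^\pm=0$ on $\g^i$, from Definition~\ref{def:triple}), the differential geometry of the signed-distance extension $\nu^i=\nabla\mathrm{d}_{\g^i}$ (so that $D\nu^i$ is symmetric, $D\nu^i[\nu^i]=0$, and $D\nu^i[\tau,\tau]=H^i$ on the one-dimensional tangent line), and the explicit expressions for $\nu_t$, $\eta_t$, $J_{\Phi_t}$ recorded in Section~\ref{sec:prel} together with their $t=0$ derivatives. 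For the $u$-identities I start from the orthogonal decomposition $\triangle u^\pm=\triangle_{\g^i}u^\pm+H^i\,\partial_{\nu^i}u^\pm+D^2u^\pm[\nu^i,\nu^i]$; inserting $\triangle u^\pm=0$ and $\partial_{\nu^i}u^\pm=0$ gives (1) immediately. For (2) I split $X=(X\cdot\nu^i)\nu^i+X^{||}$: the normal part is handled by (1), while differentiating the relation $\nabla u^\pm\cdot\nu^i=0$ in the direction $X^{||}$ gives $D^2u^\pm[X^{||},\nu^i]=-D\nu^i[\nabla_{\g^i}u^\pm,X]$ after using $D\nu^i[\nu^i]=0$. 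Identity (5) is then (2) specialised to $X=\nabla_{\g^i}u^\pm$ (killing the first term) together with $D\nu^i[\cdot,\cdot]=H^i$ on the tangent line.

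Identity (3) I obtain from the tangential Leibniz rule $\div_{\g^i}[(X\cdot\nu^i)\nabla_{\g^i}u^\pm]=(X\cdot\nu^i)\triangle_{\g^i}u^\pm+\nabla_{\g^i}(X\cdot\nu^i)\cdot\nabla_{\g^i}u^\pm$, expanding $\nabla_{\g^i}(X\cdot\nu^i)=(D_{\g^i}X)^{T}[\nu^i]+D_{\g^i}\nu^i[X]$, and eliminating the curvature terms through (2). Identity (4) is the trace of the Riccati equation $\partial_{\nu^i}(D^2\mathrm{d}_{\g^i})=-(D^2\mathrm{d}_{\g^i})^2$ for the Hessian of the distance function along normal lines; in the plane $D\nu^i$ has eigenvalues $0$ and $H^i$, so the trace of its square is $(H^i)^2=|D\nu^i|^2$. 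The flow identities (6) and (7) come from differentiating the explicit formulas. For (6) I differentiate $\nu_t\circ\Phi_t=(D\Phi_t)^{-T}[\nu^i]/|(D\Phi_t)^{-T}[\nu^i]|$ at $t=0$, using $\frac{d}{dt}(D\Phi_t)^{-T}|_{t=0}=-(DX)^{T}$ and $\Phi_0=\id$, and then subtracting the transport term $D\nu^i[X]$ to pass to the Eulerian derivative $\dot\nu^i$; the three equivalent forms follow, the last being $-\nabla_{\g^i}(X\cdot\nu^i)$. For (7) I apply the product rule to $\dot\Phi_t\cdot(\nu_t\circ\Phi_t)J_{\Phi_t}$, using $\ddot\Phi_0=Z$, the material derivative $\frac{\partial}{\partial t}(\nu_t\circ\Phi_t)|_{t=0}=-(D_{\g^i}X)^{T}[\nu^i]$, and $\frac{d}{dt}J_{\Phi_t}|_{t=0}=\div_{\g^i}X$; rewriting $\div_{\g^i}((X\cdot\nu^i)X)$ by the Leibniz rule and collecting terms with $\nabla_{\g^i}(X\cdot\nu^i)=(D_{\g^i}X)^{T}[\nu^i]+D_{\g^i}\nu^i[X]$ yields the stated right-hand side. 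These seven are precisely the computations of \cite[Lemma~3.8]{CagMorMor}, so I would simply record them.

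For the genuinely new boundary identities I proceed as follows. For (i) I differentiate $\eta_t\circ\Phi_t=D\Phi_t[\eta^i]/|D\Phi_t[\eta^i]|$ at $t=0$: since $\eta^i$ is the unit outer tangent, the derivative is the component of $DX[\eta^i]$ orthogonal to $\eta^i$, i.e. its $\nu^i$-part $(DX[\eta^i]\cdot\nu^i)\nu^i=(D_{\g^i}X)^{T}[\nu^i,\eta^i]\nu^i$. For (ii) I dot (i) with $X$, so the scalar factor becomes $(X\cdot\nu^i)(D_{\g^i}X)^{T}[\nu^i,\eta^i]$, and then use (6) in the form $\dot\nu^i\cdot\eta^i=-(D_{\g^i}X)^{T}[\nu^i,\eta^i]-D\nu^i[X^{||},\eta^i]$ together with $D\nu^i[X^{||},\eta^i]=H^i(X\cdot\eta^i)$ to split off exactly the two claimed terms. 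Identity (iii) is independent of $u$: since each $\Phi_t$ is a bijection of $\bar\o$ it maps $\po$ onto $\po$, hence $\mathrm{d}_{\po}(\Phi_t(x_i))\equiv0$; differentiating once gives $X\cdot\nu_{\po}=0$ and differentiating twice gives $D\nu_{\po}[X,X]+\nu_{\po}\cdot\ddot\Phi_0=0$, which is the assertion after $\ddot\Phi_0=Z$.

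I expect the main obstacle to be bookkeeping rather than conceptual content: one must consistently distinguish the material derivative $\frac{\partial}{\partial t}(\,\cdot\,\circ\Phi_t)|_{t=0}$ from the Eulerian derivative defining $\dot\nu^i$ (they differ precisely by the transport term $D\nu^i[X]$, which is what reconciles my computation in (6) with its use in (7) and (ii)), and carefully track which quantities are tangential so that orthogonal projections and the symmetry of $D\nu^i$ may be invoked. Among the new identities (iii) is immediate, whereas (ii) is the delicate one, since it requires combining (i) with (6) and the curvature structure $D\nu^i[\tau,\tau]=H^i$ at the endpoint.
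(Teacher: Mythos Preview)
Your proposal is correct and follows essentially the same approach as the paper: items (1)--(7) are referred to \cite[Lemma~3.8]{CagMorMor} exactly as you suggest, and your arguments for (i)--(iii) coincide with the paper's (differentiate $D\Phi_t[\eta^i]/|D\Phi_t[\eta^i]|$ for (i), combine (i) with (6) for (ii), and differentiate the constraint that $\Phi_t$ preserves $\po$ for (iii)). The only cosmetic difference is in (iii): the paper differentiates the tangency relation $(X_t\circ\Phi_t)\cdot(\nu_{\po}\circ\Phi_t)=0$ once, whereas you differentiate $\mathrm{d}_{\po}(\Phi_t(x_i))\equiv0$ twice --- the first derivative of your relation is precisely the paper's starting identity, so the two computations are the same.
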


\begin{proof}
\emph{Proof of (i)}. Let $w_t:=D\Phi_t(x)[\tau(x)]$. Then
$$
\frac{\partial}{\partial t}{(\eta^i_t\circ\Phi_t)}_{|t=0}=\frac{\partial}{\partial t}\frac{w_t}{|w_t|}\,.
$$
Since $\dot{w}_0=D_{\g^i}\dot{\Phi}[\tau^i]= D_{\g^i}X[\tau^i]D_{\g^i}\tau$, we obtain
$$
\frac{\partial}{\partial t}{(\eta^i_t\circ\Phi_t)}_{|t=0}=D_{\g^i}X[\tau]-(D_{\g^i}X)^T[\tau,\tau]\,,
$$
we conclude.\\
\emph{Proof of (ii)}. This identity follows by taking the scalar product of identity (6) with $(X\cdot\nu)\eta$, and by using (i).\\
\emph{Proof of (iii)}. This one follows by deriving with respect to the time the identity
$$
(X_t\circ\Phi_t)\cdot(\nu_{\po}\circ\Phi_t)=0\,,
$$
that holds on $\partial\g^i\cap\po$.
\end{proof}


\subsection{Properties of the function $\dot{u}$} \label{sec:dotu}

In the computations of the first and the second variation we need to know some properties of the family of functions $(u_t)_t$ that we state here. First of all we need to prove that the function $\dot{u}$ actually exists. This is provided by the following result, whose proof is just the same as those of \cite[Proposition 8.1]{CagMorMor}, where the elliptic estimates in $W^{2,p}$ for $p<4$, needed to prove the second part are, in our case, provided by Theorem ~\ref{thm:regu}.

\begin{proposition}
Let $(\Phi_t)_t$ be an admissible family of diffeomorphisms, and let $(u_t)_t$ be the functions defined in Definition ~\ref{def:umod}. Set $\widetilde{u}_t:=u_t\circ\Phi_t$ and $v_t:=\widetilde{u}_t-u$. Then the following properties hold true:
\begin{itemize}
\item[(i)] the map $t\mapsto v_t$ belongs to $C^1\bigl( (-1,1); H^1_U(\o\meno\g) \bigr)$;
\item[(ii)] for every $\bar{x}\in\bar{\g}$, let $B$ be a ball centered in $\bar{x}$ such that $B\meno\g$ has two (or, if $\bar{x}=x_0$, three) connected components $B_1$, $B_2$ (and $B_3$). For every $t\in(-1,1)$, let $\widetilde{u}^i_t$ be the restriction of $\widetilde{u}_t$ to $B_i$. Then we have that the map $\widehat{u}^i(t,x):=\widetilde{u}^i_t(x)$ belongs to $C^1\bigl( (-1,1)\times \bar{B}_i \bigr)$.
\end{itemize}
\end{proposition}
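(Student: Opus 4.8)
The plan is to freeze the moving domain by pulling everything back through $\Phi_t$, so that the whole family $(\widetilde{u}_t)_t$ solves a single elliptic problem on the fixed set $\o\meno\g$ with $t$-dependent coefficients; the regularity in $t$ then follows from the implicit function theorem, and the regularity in $x$ from elliptic estimates. Concretely, I would rewrite the relation defining $u_t$ in Definition~\ref{def:umod} after the change of variables $y=\Phi_t(x)$. Setting
\[
A_t:=(\det D\Phi_t)\,(D\Phi_t)^{-1}(D\Phi_t)^{-T},
\]
and pulling back the test functions as well, one finds that $\widetilde{u}_t=u_t\circ\Phi_t$ is characterized by $\int_{\o\meno\g}A_t\,\nabla\widetilde{u}_t\cdot\nabla z\dx=0$ for all $z\in H^1_U(\o\meno\g)$, together with $\widetilde{u}_t=u$ on $(\o\meno U)\cup\pdo$. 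Since $\Phi_0=\id$ gives $A_0=\id$ and $u$ itself solves this problem at $t=0$, the corrector $v_t=\widetilde{u}_t-u\in H^1_U(\o\meno\g)$ is the unique solution of
\[
\int_{\o\meno\g}A_t\,\nabla v_t\cdot\nabla z\dx=-\int_{\o\meno\g}(A_t-\id)\,\nabla u\cdot\nabla z\dx
\]
for all $z\in H^1_U(\o\meno\g)$. Note that $A_t-\id$ is supported in $\overline{U}$, which avoids $\mathcal S$, so the possible unboundedness of $\nabla u$ near $\mathcal S$ never enters the right-hand side.

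To obtain (i) I would apply the implicit function theorem. Define $F\colon(-1,1)\times H^1_U(\o\meno\g)\to\bigl(H^1_U(\o\meno\g)\bigr)^{*}$ by $\langle F(t,v),z\rangle:=\int_{\o\meno\g}A_t\,\nabla(u+v)\cdot\nabla z\dx$, so that $v_t$ is exactly the solution of $F(t,v_t)=0$. By the admissibility of $(\Phi_t)_t$ the map $t\mapsto A_t$ is $C^1$ with values in $L^\infty(\o;\R^{2\times 2})$, and for $|t|$ small $A_t$ is uniformly elliptic; hence $F$ is of class $C^1$, and its partial differential $\partial_v F(t,v)[w]=\bigl(z\mapsto\int_{\o\meno\g}A_t\,\nabla w\cdot\nabla z\dx\bigr)$ is, by Lax--Milgram, a topological isomorphism. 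The implicit function theorem then yields that $t\mapsto v_t$ is of class $C^1$ from $(-1,1)$ into $H^1_U(\o\meno\g)$, which is precisely (i); differentiating the displayed identity at $t=0$ simultaneously produces the equation satisfied by $\dot u$.

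For (ii) I would upgrade the spatial regularity by a localized elliptic bootstrap in each ball $B_i$. On $B_i$ the function $\widetilde{u}^i_t$ solves a uniformly elliptic equation whose coefficients $A_t$ are as regular in $x$ as the curves $\g_i$ (which are $C^{2,\alpha}$ up to closure) and which carries a homogeneous conormal condition on the relevant arc; applying the $W^{2,p}$ estimates of Theorem~\ref{thm:regu} one bounds $\|\widetilde{u}^i_t\|_{W^{2,p}(B_i)}$ in terms of quantities that are $C^1$ in $t$. Since $W^{2,p}(B_i)\hookrightarrow C^1(\bar B_i)$ as soon as $p>2$, and these estimates are available for every $p<4$, one concludes that $\widehat{u}^i\in C^1\bigl((-1,1)\times\bar B_i\bigr)$.

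The delicate point is exactly $\bar x=x_0$: there $B_i$ is a circular sector of opening $2\pi/3$, and the conormal problem has a singular part behaving like $r^{3/2}$, so that second derivatives belong only to $L^p$ for $p<4$. One must therefore verify both that this threshold still suffices for the $C^1$-embedding (it does, since $p>2$ is enough) and, more delicately, that the $W^{2,p}$ bounds together with their $t$-derivatives remain uniform up to the corner as $t$ varies — this uniformity being precisely what Theorem~\ref{thm:regu} is designed to deliver.
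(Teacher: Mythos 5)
Your proposal is correct and follows essentially the same route as the paper, which does not write out the argument but refers to \cite[Proposition~8.1]{CagMorMor}: pull back to the fixed domain via $y=\Phi_t(x)$ to get the coefficient matrix $A_t=(\det D\Phi_t)(D\Phi_t)^{-1}(D\Phi_t)^{-T}$, obtain (i) from the implicit function theorem combined with Lax--Milgram, and obtain (ii) from $W^{2,p}$ elliptic estimates with $2<p<4$ (the range the paper itself singles out as the input supplied by Theorem~\ref{thm:regu}) together with the embedding $W^{2,p}\hookrightarrow C^1$ for $p>2$. Your discussion of the $r^{3/2}$ singular part at the $2\pi/3$ corner correctly identifies why $p<4$ is the natural threshold and why it still suffices for the $C^1$ conclusion.
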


Using the above proposition it is possible to prove the following result, whose proof is just the same as those of \cite[(3.6) of Theorem 3.6]{CagMorMor}.

\begin{proposition}\label{prop:pru}
The function $\dot{u}$ exists, it is a well defined function of $H^1_U(\o\meno\g)$. 
Moreover, it is harmonic in $\o\meno\bar{\g}$ and satisfies the following Neumann boundary conditions:
$$
\partial_{\nu_{\partial\o}} \dot{u} = 0\;\;\;\;\;\text{ on } (\partial\o\meno\pdo)\cap U\,,
$$
\begin{equation}\label{eq:part}
\partial_{\nu}\dot{u}^{\pm}=\div_{\g}\bigl((X\cdot\nu)\ng u^{\pm}\bigr)\,\quad\mbox{on } \g\,.
\end{equation}
In particular, the following equation holds:
\begin{equation}\label{eq:dotu}
\int_{\o} \nabla\dot{u}\cdot\nabla z\dx = \int_{\g} \bigl[ \div_{\g}\bigl((X\cdot\nu)\ng u^+\bigr)z^+ - \div_{\g}\bigl((X\cdot\nu)\ng u^-\bigr)z^- \bigr] \dh\,,
\end{equation}
for each $z\in H^1_U(\o\meno\g)$.\\
\end{proposition}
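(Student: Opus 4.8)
The plan is to differentiate the defining variational problem for $u_t$ in $t$ and identify the limit object as $\dot u$. First I would recall from the preceding proposition that $v_t := u_t\circ\Phi_t - u$ satisfies $t\mapsto v_t \in C^1((-1,1); H^1_U(\o\meno\g))$, so that $\dot v_0$ exists as an element of $H^1_U(\o\meno\g)$. The crucial point is to relate $\dot u$ to $\dot v_0$: since $\widetilde u_t = u_t\circ\Phi_t = u + v_t$ and $\Phi_0 = \id$, differentiating the identity $u_t = \widetilde u_t \circ \Phi_t^{-1}$ at $t=0$ and using $\dot{\Phi}_0 = X$ (so that the chain rule produces a transport term $-\nabla u \cdot X$) gives the relation $\dot u = \dot v_0 - \nabla u\cdot X$ in $\o\meno\bar\g$. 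Because $X$ is smooth on each $\o^i$ and $u$ is harmonic (hence smooth in the interior of each component), $\nabla u\cdot X$ is a well-behaved $H^1$ function away from $\g$, and the boundary/support conditions inherited from $v_t \in H^1_U$ confirm $\dot u \in H^1_U(\o\meno\g)$.

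Next I would derive the weak formulation \eqref{eq:dotu}. Starting from the Euler--Lagrange identity in Definition~\ref{def:umod}, namely $\int_{\o\meno\g_t}\nabla u_t\cdot\nabla z\dx = 0$ for all admissible $z$, I would change variables via $\Phi_t$ to pull the integral back onto the fixed domain $\o\meno\g$, picking up the Jacobian factors and the transported gradient $(D\Phi_t)^{-1}$. This yields an identity of the form $\int_{\o\meno\g} A_t\,\nabla\widetilde u_t\cdot\nabla(z\circ\Phi_t)\dx = 0$, where $A_t := (\det D\Phi_t)(D\Phi_t)^{-1}(D\Phi_t)^{-T}$ and $A_0 = \id$. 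Differentiating at $t=0$, using the $C^1$-regularity in $t$ to justify passing the derivative inside, and using $\dot A_0 = (\div X)\id - DX - (DX)^T$ together with the harmonicity of $u$, produces an equation for $\dot{\widetilde u}_0 = \dot v_0$ tested against a fixed $z$. Substituting $\dot u = \dot v_0 - \nabla u\cdot X$ and simplifying the resulting bulk terms (integrating by parts and invoking $\triangle u = 0$) should collapse everything into a pure boundary integral over $\g$ supported on the two traces, giving precisely \eqref{eq:dotu}.

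The harmonicity of $\dot u$ and its boundary conditions then follow by reading off \eqref{eq:dotu} with appropriate test functions. Taking $z$ compactly supported in $\o\meno\bar\g$ forces $\int_\o \nabla\dot u\cdot\nabla z\dx = 0$, so $\dot u$ is weakly (hence classically) harmonic there. Choosing $z$ supported near $(\po\meno\pdo)\cap U$ yields the homogeneous Neumann condition on the outer boundary, since the right-hand side of \eqref{eq:dotu} only sees $\g$. Finally, localizing $z$ near $\g$ and integrating by parts on each side identifies the normal derivative jumps: the surface term $\int_\g[\partial_\nu\dot u^+ z^+ - \partial_\nu\dot u^- z^-]\dh$ must match the right-hand side of \eqref{eq:dotu}, giving \eqref{eq:part}.

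I expect the main obstacle to be the careful differentiation of the pulled-back Euler--Lagrange identity at $t=0$ and the bookkeeping needed to convert the resulting bulk expression into the clean boundary form \eqref{eq:dotu}. The weak regularity in $t$ (only $C^1$, as emphasized in the remark after Definition~\ref{def:family}) means one must be attentive about which derivatives are legitimate and in what sense; moreover, the presence of the triple junction $x_0$ requires checking that the integration by parts on each $\g^i$ does not generate spurious endpoint contributions at $x_0$. Since the claim explicitly invokes that the proof mirrors \cite[(3.6) of Theorem 3.6]{CagMorMor}, the genuinely new verification is that these triple-point endpoint terms cancel, which hinges on the $2\pi/3$ angle geometry and the matching of traces across the three components at $x_0$.
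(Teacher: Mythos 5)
Your proposal is correct in outline and follows essentially the same route as the proof the paper relies on: the paper gives no independent argument here but defers entirely to \cite[(3.6) of Theorem 3.6]{CagMorMor}, whose proof is exactly the scheme you describe (differentiate the pulled-back Euler--Lagrange identity with $A_t=\det D\Phi_t\,(D\Phi_t)^{-1}(D\Phi_t)^{-T}$, use $\dot u=\dot v_0-\nabla u\cdot X$, and collapse the bulk terms by harmonicity). One correction to your closing paragraph: the concern that the ``genuinely new verification'' is a cancellation of endpoint terms at $x_0$ hinging on the $2\pi/3$ angles is misplaced. The proposition is stated for a general admissible triple point, not a critical one, so the $2\pi/3$ condition (Definition~\ref{def:crit}) is not available and cannot be used; and in fact no such endpoint contributions arise, because the integration by parts is performed in the two-dimensional components $\o^i$ (producing only line integrals over $\g$ and $\po$, with the $\po$-part killed by $X\cdot\nu_{\po}=0$ and the Neumann condition), while the passage from $(D_{\g}X)^{T}[\nu,\nabla_{\g}u^{\pm}]-D^2u^{\pm}[X,\nu]$ to $\div_{\g}\bigl((X\cdot\nu)\nabla_{\g}u^{\pm}\bigr)$ is the pointwise identity (3) of Lemma~\ref{lem:geom}, not an integration by parts along $\g$.
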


\begin{remark}
First of all we notice that the right-hand side of \eqref{eq:part} is well defined. Indeed, by Theorem ~\ref{thm:regu} that $u$ is of class $H^2$ in a neighborhood of $\g$, and thus $\ng u^\pm\in H^{\frac{1}{2}}(\bar{\g})$. So, since $\g$ and $X$ are regular, we get that
$(X\cdot\nu)\ng u^\pm\in H^{\frac{1}{2}}(\bar{\g})$.
\end{remark}


\section{First and second variation}\label{sec:variations}

The aim of this section is to compute the first and the second variation of the functional $\ms$ at a triple point $(u,\g)$.

\begin{theorem}\label{thm:secvar}
Let $(u,\g)$ be a triple point, $U$ an admissible subdomain and $(\Phi_t)_{t\in(-1,1)}$ an admissible family for
$(u,\g)$ in $U$. Set $f:=|\nabla_{\g} u^-|^2 - |\nabla_{\g} u^+|^2 + H$.
Then the first variation of the functional $\ms$ computed at $(u,\g)$ with respect to $(\Phi_t)_{t\in(-1,1)}$ is given by:
\begin{equation}\label{eq:firstvar}
\frac{\mathrm{d}}{\mathrm{d} t}{\ms\bigl((u_t,\g_t);U\bigr)}_{|t=0} = \int_{\g} f(X\cdot\nu)\dh + \int_{\partial{\g}} X\cdot\eta\dc\,,
\end{equation}

while the second variation reads as:
\begin{align}\label{eq:secvar}
\frac{\mathrm{d}^2}{\mathrm{d}t^2}&{\ms\bigl((u_t,\g_t);U\bigr)}_{|t=0} = -2\int_{U}|\nabla\dot{u}|^2\dx + \int_{\g}|\ng(X\cdot\nu)|^2\dh + \int_{\g}H^2(X\cdot\nu)^2\dh \nonumber \\
& +\int_{\g}f\bigl[ Z\cdot\nu-2X^{||}\cdot\ng(X\cdot\nu)+D\nu[X^{||},X^{||}] - H(X\cdot\nu)^2 \bigr]\dh + \int_{\partial\g} Z\cdot\eta \dc \,.
\end{align}
\end{theorem}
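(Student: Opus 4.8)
The plan is to split the energy as $\ms\bigl((u_t,\g_t);U\bigr)=E(t)+P(t)$, with bulk part $E(t):=\int_{U\meno\g_t}|\nabla u_t|^2\dx$ and perimeter part $P(t):=\h(\g_t)$, and to treat the two separately for the first variation. For $P$ I would apply the area formula to write $P(t)=\int_\g J_{\Phi_t}\dh$ and then, at $t=0$, use the divergence formula \eqref{eq:curv} with the field $X$, obtaining $P'(0)=\int_\g\div_\g X\dh=\int_\g H(X\cdot\nu)\dh+\int_{\partial\g}X\cdot\eta\dc$. For $E$ I would change variables $y=\Phi_t(x)$: setting $\widetilde u_t:=u_t\circ\Phi_t$ and $A_t:=(D\Phi_t)^{-1}(D\Phi_t)^{-T}\det D\Phi_t$ gives $E(t)=\int_{U\meno\g}A_t\nabla\widetilde u_t\cdot\nabla\widetilde u_t\dx$. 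Since $\dot{\widetilde u}\in H^1_U(\o\meno\g)$ is admissible in \eqref{eq:u}, the term carrying $\nabla\dot{\widetilde u}$ drops and $E'(0)=\int_{U\meno\g}\dot A_0\nabla u\cdot\nabla u\dx$ with $\dot A_0=(\div X)\id-DX-(DX)^T$. Because $\triangle u=0$, the integrand equals $\div(|\nabla u|^2 X-2(\nabla u\cdot X)\nabla u)$; the divergence theorem on each $\o^i$, together with $\partial_\nu u^\pm=0$ on $\g$ and $X\cdot\nu_{\po}=0$ on $\po$, leaves only the jump across $\g$, namely $E'(0)=\int_\g(|\ng u^-|^2-|\ng u^+|^2)(X\cdot\nu)\dh$. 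Summing and recalling the definition of $f$ yields \eqref{eq:firstvar}.

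For the second variation I would differentiate the first–variation identity in time. Since $u_t$ is a minimizer for every $t$, the computation above applies verbatim at each $t$ with the instantaneous velocity $X_t$, so $g'(t)=\int_{\g_t}f_t(X_t\cdot\nu_t)\dh+\int_{\partial\g_t}X_t\cdot\eta_t\dc$, where $f_t:=|\nabla_{\g_t}u_t^-|^2-|\nabla_{\g_t}u_t^+|^2+H_t$. I would pull the bulk integral back to $\g$ by the area formula and use $X_t\circ\Phi_t=\dot\Phi_t$ to write it as $\int_\g(f_t\circ\Phi_t)\bigl[\dot\Phi_t\cdot(\nu_t\circ\Phi_t)J_{\Phi_t}\bigr]\dh$. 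Differentiating at $t=0$ produces $\int_\g\dot f\,(X\cdot\nu)\dh+\int_\g f\,\tfrac{\partial}{\partial t}\bigl(\dot\Phi_t\cdot(\nu_t\circ\Phi_t)J_{\Phi_t}\bigr)_{|t=0}\dh$, with $\dot f:=\tfrac{\partial}{\partial t}(f_t\circ\Phi_t)_{|t=0}$; the second factor is exactly identity (7) of Lemma \ref{lem:geom}, producing the $f$–bracket with the term $\div_\g((X\cdot\nu)X)$ still present.

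Computing $\dot f$ is the heart of the matter. Its curvature part is the material derivative of the curvature, $-\triangle_\g(X\cdot\nu)-H^2(X\cdot\nu)+\ng H\cdot X^{||}$ (using $|D\nu|^2=H^2$ from identity (4)); paired with $(X\cdot\nu)$ and integrated by parts on each $\g^i$ it contributes $\int_\g|\ng(X\cdot\nu)|^2\dh$ together with curvature and endpoint terms. Its $u$–part comes from $|\nabla u_t^\pm|^2\circ\Phi_t=|(D\Phi_t)^{-T}\nabla\widetilde u_t^\pm|^2$, whose derivative at $t=0$ is $-2DX\nabla u^\pm\cdot\nabla u^\pm+2\nabla u^\pm\cdot\nabla\dot{\widetilde u}^\pm$ on each side. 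Inserting the relation $\dot{\widetilde u}=\dot u+X\cdot\nabla u$ between the material and the Eulerian derivative, exploiting $\partial_\nu u^\pm=0$, and using identities (1)--(3) and (5) of Lemma \ref{lem:geom} to rewrite the second derivatives of $u$ as tangential divergences, the terms $\ng u^\pm\cdot\ng\dot u^\pm$ are integrated by parts on $\g$ and identified, through the weak equation \eqref{eq:dotu} tested with $z=\dot u$, as the nonlocal contribution $-2\int_U|\nabla\dot u|^2\dx$.

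It remains to differentiate the boundary integral: pulling back gives $\int_{\partial\g_t}X_t\cdot\eta_t\dc=\sum_{\partial\g}\dot\Phi_t\cdot(\eta_t\circ\Phi_t)$, and differentiating with identities (i)--(ii) of Lemma \ref{lem:geom} yields $\int_{\partial\g}Z\cdot\eta\dc$ plus endpoint corrections, the contributions on $\po$ being controlled by identity (iii). Assembling everything, I would expand $\div_\g((X\cdot\nu)X)$ and integrate by parts once more via \eqref{eq:curv}, so that after the curvature terms coming from $\dot f$ and all the endpoint corrections cancel, the surviving geometry is precisely the standalone $\int_\g H^2(X\cdot\nu)^2\dh$ and the $f$–bracket with $\div_\g((X\cdot\nu)X)$ replaced by $-H(X\cdot\nu)^2$, which is \eqref{eq:secvar}. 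I expect the main obstacle to be exactly the $u$–part of $\dot f$: the careful passage from the material derivative $\dot{\widetilde u}$ to the Eulerian derivative $\dot u$ and the verification, through \eqref{eq:dotu}, that the $\dot u$–terms collapse to precisely $-2\int_U|\nabla\dot u|^2\dx$, together with the delicate bookkeeping of all endpoint contributions at the triple junction $x_0$ and on $\po$, where the regularity of $(u,\g)$ and identity (iii) are essential.
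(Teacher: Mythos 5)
Your proposal follows essentially the same route as the paper: split bulk and perimeter, kill the $\nabla\dot u$ term via the weak equation and reduce the bulk derivative to a divergence whose jump across $\g$ gives $|\ng u^-|^2-|\ng u^+|^2$, then differentiate the first-variation identity at general $t$, pull back by the area formula, invoke identity (7) of Lemma~\ref{lem:geom}, identify the nonlocal term $-2\int_U|\nabla\dot u|^2\dx$ by testing \eqref{eq:dotu} with $\dot u$, and cancel the endpoint contributions using $\nabla u^{\pm}(x_i)=0$; your choice to work with material derivatives (of $f$ and of the curvature) rather than splitting Eulerian and convective parts as the paper does is only cosmetic. One small correction: identity (iii) of Lemma~\ref{lem:geom} is not needed here (it enters only in Proposition~\ref{prop:svcp} for critical points); the endpoint terms on $\po$ are handled by the vanishing of $\nabla u^{\pm}$ at the $x_i$ together with identities (i)--(ii).
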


\begin{proof}
\emph{Computation of the first variation}. In order to derive the function
$$
t\mapsto\ms\bigl((u_t,\g_t);U\bigr)=\int_{U\,\meno\,\g_t} |\nabla u_t|^2\dx + \h(\g_t)\,,
$$
we treat the two terms separately. For the first one we write
\[
\int_{U\,\meno\,\g_t} |\nabla u_t|^2\dx = \sum_{i=1}^3
\int_{U\cap\o^i_t} |\nabla u_t|^2\dx\,,
\]
where $\o^i_t:=\Phi_t(\o^i)$. By our assumptions, $\o_t^i$ is diffeomorphic to $\o_i$ through $\Phi^i_t$ and thus we can apply the change of variable formula. So we have
\begin{align*}
\frac{\mathrm{d}}{\mathrm{d} s}&\Biggl( \int_{U\,\meno\,\g_s}|\nabla u_s|^2\dx \Biggr)_{|s=t} =
				\frac{\mathrm{d}}{\mathrm{d} s}\Biggl( \int_{U\,\meno\,\g}|\nabla u_s\circ\Phi_s|^2\det D\Phi_s \dx\Biggr)_{|s=t} \\
&= \int_{U\,\meno\,\g} \Bigl[ 2(\nabla u_t\circ\Phi_t)\cdot\bigl( (\nabla \dot{u}_t\circ\Phi_t) + (D^2u_t\circ\Phi_t)\dot{\Phi}_t \bigr) 
  + |\nabla u_t\circ\Phi_t|^2 \div X_t\circ\Phi_t\Bigr]\det D\Phi_t  \dx \\
&= 2\int_{U\,\meno\,\g_t} \nabla u_t\cdot\nabla\dot{u}_t\dx + \int_{U\,\meno\,\g_t} \Bigl( 2D^2 u_t[\nabla u_t, X_t] + |\nabla u_t|^2\div X_t \Bigr)\dx\,.
\end{align*}
Recalling that $\dot{u}_t\in H^1_U(U\meno\g_t)$ by Proposition ~\ref{prop:pru}, from \eqref{eq:u} we get that the first integral vanishes. Moreover, since it is possible to write
$$
2D^2 u_t[\nabla u_t, X_t] + |\nabla u_t|^2\div X_t = \div\bigl(|\nabla u_t|^2 X_t \bigr)\,,
$$
integrating by parts in each connected component of $\o\meno\g_t$, and recalling that $X_t\cdot\nu_{\po}=0$, we get
$$
\frac{\mathrm{d}}{\mathrm{d} s}\Biggl( \int_{U\,\meno\,\g_s}|\nabla u_s|^2\dx \Biggr)_{|s=t} =\int_{\g_t} \bigl( |\nabla u_t^-|^2 - |\nabla u_t^+|^2 \bigr) (X_t\cdot\nu_t)\dh\,.
$$
Finally we also notice that in the last expression, we can substitute the operator $\nabla$ with
the operator $\nabla_{\g_t}$, since $\partial_{\nu_t}u_t=0$.

For the second term, it is well known (see, \emph{e.g.}, \cite{Sim}) that 
\begin{eqnarray*}
\frac{\mathrm{d}}{\mathrm{d} t}\bigg( \h(\g_s) \biggr)_{|s=t} &=& \int_{\g_t} \div_{\g_t} X_t\dh = \int_{\g_t} H_t (X_t\cdot\nu_t)\dh + \int_{\partial{\g_t}} X_t\cdot\eta_t\dc\,.
\end{eqnarray*}
Hence, defining the function $f_t$ on $\g_t$ as $f_t:=|\nabla_{\g_t} u_t^-|^2 - |\nabla_{\g_t} u_t^+|^2 + H_t$, we obtain
\begin{equation}\label{eq:firvar}
\frac{\mathrm{d}}{\mathrm{d} s}\ms\bigl((u_s,\g_s);U\bigr)_{|s=t} = \int_{\g_t} f_t (X_t\cdot\nu_t)\dh + \int_{\partial{\g_t}} X_t\cdot\eta_t\dc\,.
\end{equation}
Notice that the functions $f_t$ are well defined $C^1$ functions in a normal tubular neighborhood of $\g_t$.
In particular, for $t=0$, we deduce the following expression for the first variation:
$$
\frac{\mathrm{d}}{\mathrm{d} t}{\ms\bigl((u_t,\g_t);U\bigr)}_{|t=0} = \int_{\g} \bigl(|\nabla_{\g} u^+|^2 - |\nabla_{\g} u^-|^2 + H\bigr)(X\cdot\nu)\dh + \int_{\partial{\g}} X\cdot\eta\dc\,.
$$

\emph{Computation of the second variation}. Now we want to compute
$$
\frac{\mathrm{d}^2}{\mathrm{d}s^2} {\ms\bigl((u_s,\g_s);U\bigr)}_{|s=t}\,,
$$
for $s\in(-1,1)$. The derivative of the first term of \eqref{eq:firvar} can be computed as follows:
\begin{align*}
\frac{\mathrm{d}}{\mathrm{d} t}&{\Biggl( \int_{\g_t} f_t (X_t\cdot\nu_t)\dh \Biggr)}_{|t=0} = \frac{\mathrm{d}}{\mathrm{d} t}{\Biggl( \int_{\g} (f_t\circ\Phi_t) 
				(X_t\circ\Phi_t)\cdot(\nu_t\circ\Phi_t)J_{\Phi_t}\dh \Biggr)}_{|t=0}\\
&= \int_{\g} (\dot{f}+\nabla f\cdot X)(X\cdot\nu)\dh + \int_{\g} f\frac{\partial}{\partial t}{\bigl(\dot{\Phi}_t\cdot(\nu_t\circ\Phi_t)J_{\Phi_t}\bigr)}_{|t=0} \dh \,.
\end{align*}
Now, using equality (7) of Lemma ~\ref{lem:geom} to rewrite the second integral, we get
\begin{align*}
\frac{\mathrm{d}}{\mathrm{d} t}&{\Biggl( \int_{\g_t} f_t (X_t\cdot\nu_t)\dh \Biggr)}_{|t=0} = \int_{\g} (\dot{f}+\nabla f\cdot X)(X\cdot\nu)\dh \\
&\hspace{0.3cm} + \int_{\g} f\bigl(  Z\cdot\nu-2X^{||}\cdot\ng(X\cdot\nu)+D\nu[X^{||},X^{||}]+\div_{\g}\bigl((X\cdot\nu)X\bigr) \bigr)\dh \\
&= \int_{\g} f\bigl(Z\cdot\nu-2X^{||}\cdot\ng(X\cdot\nu)+D\nu[X^{||},X^{||}] \bigr)\dh \\
&\hspace{0.3cm}+ \int_{\g} (\dot{f}+\nabla f\cdot \nu(X\cdot\nu))(X\cdot\nu) \dh + \int_{\g} \div_{\g}\bigl( f(X\cdot\nu)X \bigr)\dh \\
&= \int_{\g} (\dot{f}+\nabla f\cdot \nu(X\cdot\nu))(X\cdot\nu) \dh + \int_{\partial\g}f(X\cdot\nu)(X\cdot\eta)\mathrm{d}\mathcal{H}^0\\
&\hspace{0.3cm} +\int_{\g} Hf(X\cdot\nu)^2\dh + \int_{\g}f\bigl(Z\cdot\nu-2X^{||}\cdot\ng(X\cdot\nu)+D\nu[X^{||},X^{||}] \bigr) \dh\,,
\end{align*}
where the last equality follows from integration by parts, while the previous one by writing $X=(X\cdot\nu)\nu+X^{||}$.
Now, recalling that $f=|\nabla_{\g} u^-|^2 - |\nabla_{\g} u^+|^2 + H$, we have that
$$
\begin{array}{c}
\nabla f = 2\ng u^+\, D^2u^- - 2\ng u^-\, D^2u^+ + \nabla H\,,\\
{}\\
\dot{f}= 2\ng u^+\cdot\ng\dot{u}^- - 2\ng u^-\cdot\ng\dot{u}^+ + \dot{H}\,.
\end{array}
$$
Using the above identities and (2), (4) and (5) of Lemma ~\ref{lem:geom} we can write
\begin{align*}
\int_{\g} (\nabla f\cdot\nu)&(X\cdot\nu)^2\dh = \int_{\g} (X\cdot\nu)^2\bigl[ 2D^2 u^-[\ng u^-,\nu] - 2D^2 u^+[\ng u^+,\nu] + \partial_{\nu}H \bigr] \dh \\
&= \int_{\g} (X\cdot\nu)^2\bigl[ 2D\nu[\ng u^{+}, \ng u^{+}] - 2D\nu[\ng u^{-}, \ng u^{-}] - |D\nu|^2 \bigr]\dh \\
&= \int_{\g} (H^2-2fH)(X\cdot\nu)^2\dh\,,
\end{align*}
where the identity $D\nu[\tau,\tau]=H$ has been used in the last step.

Now we would like to treat the term $\int_{\g} \dot{f}(X\cdot\nu) \dh$. First of all we recall that $H=\div_{\g}\nu$ and $\partial_\nu\dot{\nu}=0$ (since $|\nu_t|^2\equiv1$). Thus $\dot{H}=\mathrm{div}_\g\dot{\nu}$, and hence
\begin{eqnarray*}
\int_{\g}\dot{H}(X\cdot\nu)\dh &=& \int_{\g}(\div_{\g}\dot{\nu})(X\cdot\nu)\dh \\
&=& -\int_{\g} \dot{\nu}\cdot\ng(X\cdot\nu)\dh + \int_{\partial\g}(\dot{\nu}\cdot\eta)(X\cdot\nu)\dc \\
&=& \int_{\g} |\ng (X\cdot\nu)|^2\dh + \int_{\partial\g}(\dot{\nu}\cdot\eta)(X\cdot\nu)\dc\,,
\end{eqnarray*}
where in the last line we have used $(6)$ of Lemma ~\ref{lem:geom}. Moreover
\begin{equation*}
\int_{\g} (\ng u^{\pm} \cdot \ng \dot{u}^{\pm}) (X\cdot\nu)\dh
= -\int_{\g} \dot{u}^{\pm}\div_{\g}\bigl(\ng u^{\pm}(X\cdot\nu)\bigr)\dh + 2\int_{\partial\g} \dot{u}^{\pm}(X\cdot\nu)(\ng u^{\pm}\cdot\eta)\dc\,,
\end{equation*}
Hence, recalling \eqref{eq:dotu}, we obtain
\begin{align*}
\int_{\g} \dot{f}(X\cdot\nu)\dh =& -2\int_{U}|\nabla\dot{u}|^2\dx + \int_{\g} |\ng (X\cdot\nu)|^2\dh + \int_{\partial\g}(\dot{\nu}\cdot\eta)(X\cdot\nu)\dc \\
&+ 2\int_{\partial\g} \Bigl[ \dot{u}^+(X\cdot\nu)(\ng u^+\cdot\eta) - \dot{u}^-(X\cdot\nu)(\ng u^-\cdot\eta) \Bigr] \dc \,.
\end{align*}

Finaly, we have to compute the derivative of the second integral of \eqref{eq:firstvar}. Using $(ii)$ of Lemma ~\ref{lem:geom}, we have that
\begin{eqnarray*}
\frac{\mathrm{d}}{\mathrm{d} t}{\Biggl( \int_{\partial\g_t} X_t\cdot\eta_t \dc \Biggr)}_{|t=0} &=& 
						\frac{\mathrm{d}}{\mathrm{d} t}{\Biggl( \int_{\partial\g} (X_t\circ\Phi_t)\cdot(\eta_t\circ\Phi_t) \dc \Biggr)}_{|t=0} \\
&=& \int_{\partial\g} \biggl( Z\cdot\eta + X\cdot\frac{\partial}{\partial t}{(\eta_t\circ\Phi_t)}_{|t=0} \biggr)\dc \\
&=& \int_{\partial\g} \biggl( Z\cdot\eta -(X\cdot\nu)(\dot{\nu}\cdot\eta)-H(X\cdot\nu)(X\cdot\eta)  \biggr)\dc\,.
\end{eqnarray*}

We now observe that some integrals vanishes for regular admissible triple points. Indeed, by the Neumann conditions satisfied by $u$, we know that 
$\partial_{\nu}u^{\pm}=0$ on $\g$ and that $\partial_{\nu_{\po}}u^{\pm}=0$ on $\partial_{N}\o\cap\bar{U}$. The admissibility conditions we required on regular admissible triple points tell us that $\nu_{\po}(x_i)$ and $\nu_i(x_i)$ are linear independent for every $i=1,2,3$, as well as
$\nu_1(x_0)$ and $\nu_2(x_0)$. Using the fact that $\nabla u^{\pm}$ is continuous up to the closure of $\g$, we can infer that $\nabla u^{\pm}(x_i)=0$ for each $i=0,1,2,3$.\\

Combining all the above identities, we obtain the desired formula for the second variation of our functional $\ms$ at a regular admissible triple point $(u,\g)$.
\end{proof}

\begin{remark}\label{rem:secvar}
The above expression for the second variation can be also used to compute the second variation at a generic time $t\in(-1,1)$. Indeed, fix $t\in(-1,1)$, and consider the family of diffeomorphisms
$$
\widetilde{\Phi}_{s}:=\Phi_{t+s}\circ\Phi_{t}^{-1}\,.
$$
It is easy to see that this family is admissible for $(u,\g)$ in $U$, and that
$$
\frac{\mathrm{d}^2}{\mathrm{d}s^2} {\ms\bigl((u_s,\g_s);U\bigr)}_{|s=t} =
             \frac{\mathrm{d}^2}{\mathrm{d}h^2} {\ms\bigl((u_{t+h},\widetilde{\Phi}_{h}(\g_{t}));U\bigr)}_{|h=0}\,.
$$
Hence, \emph{mutatis mutandis}, the same expression as in \eqref{eq:secvar} holds true for the second variation at a generic time $t\in(-1,1)$.
\end{remark}

The expression \eqref{eq:firstvar} of the first variation suggests the following definition.

\begin{definition}\label{def:crit}
Let $(u,\g)$ be a triple point and $U$ an admissible subdomain. We say that $(u,\g)$ is \emph{critical} if the following three conditions are satisfied:
\begin{itemize}
\item $H=|\nabla_{\g} u^-|^2 - |\nabla_{\g} u^+|^2$ on $\g$\,,
\item the $\g^i$'s meet in $x_0$ at $\frac{2}{3}\pi$\,,
\item each $\g^i$ meets $\po$ orthogonally\,.
\end{itemize}
\end{definition}

\begin{remark}
Notice that a critical triple point is such that $H_i=0$ on $\partial\g^i$.
\end{remark}

Now we want to rewrite the second variation in a critical triple point.

\begin{proposition}\label{prop:svcp}
Let $(u,\g)$ be a regular critical triple point. Then the second variation of $\ms$ at $(u,\g)$ in $U$ can be written as follows:
\begin{eqnarray*}
\frac{\mathrm{d}^2}{\mathrm{d}t^2}{\ms\bigl((u_t,\g_t); U\bigr)}_{|t=0} &=& -2\int_{U}|\nabla\dot{u}|^2\dx + \int_{\g}|\ng(X\cdot\nu)|^2\dh  \\
& & + \int_{\g}H^2(X\cdot\nu)^2\dh -\sum_{i=1}^{3}(H_{\po}(X\cdot\nu^i)^2)(x_i)\,.
\end{eqnarray*}
\end{proposition}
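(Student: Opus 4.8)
The plan is to specialize the general second variation \eqref{eq:secvar} using the three criticality conditions of Definition \ref{def:crit}. The first and decisive simplification comes from observing that the first condition in Definition \ref{def:crit} is precisely the statement that $f=|\ng u^-|^2-|\ng u^+|^2+H$ vanishes identically on $\g$ (this is exactly the condition under which the bulk part of the first variation \eqref{eq:firstvar} is killed for arbitrary normal speeds $X\cdot\nu$). Since $f\equiv0$ on each $\g^i$, the entire integral $\int_{\g}f[\,\cdots]\dh$ appearing in \eqref{eq:secvar} drops out; in particular the term $-fH(X\cdot\nu)^2$ vanishes, and we are left with the bulk contribution $-2\int_{U}|\nabla\dot{u}|^2\dx+\int_{\g}|\ng(X\cdot\nu)|^2\dh+\int_{\g}H^2(X\cdot\nu)^2\dh$ together with the boundary term $\int_{\partial\g}Z\cdot\eta\dc$. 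It therefore remains only to evaluate this last boundary integral.

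Next I would split $\int_{\partial\g}Z\cdot\eta\dc=\sum_{i=1}^{3}\bigl[(Z\cdot\eta^i)(x_0)+(Z\cdot\eta^i)(x_i)\bigr]$ according to the convention for $\int_{\partial\g}$, and treat the triple junction $x_0$ and the outer endpoints $x_i\in\po$ separately. At $x_0$ the crucial point is that $Z(x_0)=\ddot{\Phi}_0(x_0)$ is a single well-defined vector: since $x_0$ is the common endpoint of all three arcs and $\Phi_t$ is a bijection of $\bar{\o}$, the point $\Phi_t(x_0)$ is unambiguous and depends $C^2$ on $t$, so the restriction of $Z$ to each $\g^i$ takes the same value at $x_0$. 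The second criticality condition (equal angles $2\pi/3$) forces the three outward unit tangents to satisfy $\eta^1(x_0)+\eta^2(x_0)+\eta^3(x_0)=0$, whence $\sum_{i}(Z\cdot\eta^i)(x_0)=Z(x_0)\cdot\sum_{i}\eta^i(x_0)=0$. Thus the triple junction contributes nothing.

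Finally, at each $x_i\in\po$ I would combine the orthogonality condition with identity (iii) of Lemma \ref{lem:geom}. Because $\g^i$ meets $\po$ orthogonally, the outward tangent $\eta^i(x_i)$ coincides (with the orientation fixed in the geometric preliminaries) with the normal $\nu_{\po}(x_i)$, so that $(Z\cdot\eta^i)(x_i)=(Z\cdot\nu_{\po})(x_i)$, and identity (iii) rewrites this as $-D\nu_{\po}[X,X](x_i)$. To finish I would decompose $X$ at $x_i$ along $\nu_{\po}$ and $\tau_{\po}$: since $\partial_{\nu_{\po}}\nu_{\po}=0$ and $D\nu_{\po}[\tau_{\po}]\cdot\nu_{\po}=0$, only the tangential component survives, giving $D\nu_{\po}[X,X]=H_{\po}(X\cdot\tau_{\po})^2$ with $H_{\po}=D\nu_{\po}[\tau_{\po},\tau_{\po}]$. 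The orientation relation $\nu^i(x_i)=\tau_{\po}(x_i)$ then lets me replace $X\cdot\tau_{\po}$ by $X\cdot\nu^i$, so that $(Z\cdot\eta^i)(x_i)=-H_{\po}(X\cdot\nu^i)^2(x_i)$. Summing over $i$ yields exactly the stated boundary term $-\sum_{i=1}^{3}(H_{\po}(X\cdot\nu^i)^2)(x_i)$, and assembling the pieces gives the claimed formula.

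The step I expect to be most delicate is this last boundary analysis at the $x_i$: one must carefully track the fixed orientation conventions (clockwise $\po$, $\nu_{\po}$ obtained by rotating $\tau_{\po}$, and $\nu^i(x_i)=\tau_{\po}(x_i)$) in order to pin down the correct sign in the identification $\eta^i(x_i)=\nu_{\po}(x_i)$ and to ensure that the curvature $H_{\po}=\div\nu_{\po}$ enters with the sign producing $-H_{\po}(X\cdot\nu^i)^2$ rather than its opposite. The triple-junction cancellation and the vanishing of the $f$-integral are, by contrast, essentially immediate once $f\equiv0$ and $\sum_i\eta^i(x_0)=0$ are in hand.
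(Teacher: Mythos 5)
Your proposal is correct and follows essentially the same route as the paper: specialize \eqref{eq:secvar} using $f\equiv 0$ from criticality, kill the triple-junction contribution of $\int_{\partial\g}Z\cdot\eta\,\mathrm{d}\mathcal{H}^0$ via the $2\pi/3$ angle condition, and convert the contributions at the $x_i$ into $-H_{\po}(X\cdot\nu^i)^2(x_i)$ using orthogonality, identity (iii) of Lemma \ref{lem:geom}, the tangency of $X$ to $\po$, and the orientation convention $\nu^i(x_i)=\tau_{\po}(x_i)$. The sign bookkeeping you flag as delicate is handled correctly and matches the paper's computation $Z\cdot\eta=-D\nu_{\po}[X,X]=-H_{\po}(X\cdot\nu)^2$.
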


\begin{proof}
We notice that for a regular admissible critical triple point $f=0$ on $\g$, $\nabla u^{\pm}=0$ on $\partial\g$, $X\cdot\eta=X\cdot\nu_{\po}=0$ on
$\bar{\g}\cap\po$ and, thanks to $(iii)$ of Lemma ~\ref{lem:geom}, that
$$
Z\cdot\eta=-D\nu_{\po}[X,X]=-(X\cdot\nu)^2 D\nu_{\po}[\nu,\nu]=-H_{\po}(X\cdot\nu)^2\,.
$$
Recalling that the $\g^i$'s meet in $x_0$ at $\frac{2}{3}\pi$, we also have that $\sum_{i=1}^3 Z\cdot\nu^i(x_0)=0$.
This allows to conclude.
\end{proof}

The above result suggests to introduce the following definition.

\begin{definition}\label{def:h1}
We introduce the space
$$
\widetilde{H}^1(\g):=\{ \p:\Gamma\rightarrow\R\,:\, \p_i\in H^1(\g^i)\,,\,\bigl(\p_1+\p_2+\p_3\bigr)(x_0)=0 \}\,,
$$ 
endowed with the norm given by:
$$
\|\p\|_{\widetilde{H}^1(\g)}:=\sum_{i=1}^3\|\p_i\|_{H^1(\g^i)}\,.
$$
Then, we define the quadratic form $\partial^2\ms\bigl((u,\g);U\bigr):\widetilde{H}^1(\g)\rightarrow\R$ as
\begin{eqnarray*}
\partial^2\ms\bigl((u,\g);U\bigr)[\p] &:=& -2\int_{U}|\nabla v_{\p}|^2\dx + \int_{\g}|\ng \p |^2\dh + \int_{\g}H^2\p^2\dh \\
& &-\sum_{i=1}^{3}\bigl(\p_i^2 D\nu_{\po}[\nu,\nu]\bigr)(x_i) \,,
\end{eqnarray*}
where $v_{\p}\in H^1_U(\o\meno\g)$ is the solution of
\begin{eqnarray}\label{eq:vphi}
\int_{\o}\nabla v_{\p}\cdot\nabla z\,\mathrm{d}x &=& \langle \div_{\g}\bigl( \p\nabla_{\g}u^+ \bigr), z^+ \rangle_{H^{-\frac{1}{2}}(\g)\times H^{\frac{1}{2}}(\g)} \nonumber\\
&& \hspace{0.3cm} - \langle \div_{\g}\bigl( \p\nabla_{\g}u^- \bigr), z^- \rangle_{H^{-\frac{1}{2}}(\g)\times H^{\frac{1}{2}}(\g)}\,,
\end{eqnarray}
for every $z\in H^1_U(\o\meno\g)$.
\end{definition}

The following lemma ensures that the right-hand side of \eqref{eq:vphi} makes sense.

\begin{lemma}\label{lem:fss}
Let $\varphi\in\widetilde{H}^1(\g)$ and let $\Phi\in H^{\frac{1}{2}}(\g)\cap C^0(\g)$. Then $\varphi\Phi\in H^{\frac{1}{2}}(\g)$.
\end{lemma}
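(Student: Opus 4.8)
The plan is to reduce the statement to a one‑dimensional multiplication estimate on each branch and then invoke the fact that $H^{\frac{1}{2}}\cap L^\infty$ is stable under products. Membership in $H^{\frac{1}{2}}(\g)$ is checked branch by branch: in one dimension a function of fractional order $1/2$ has no well‑defined value at a point, so no matching condition at the junction $x_0$ can be imposed, and it suffices to prove that $\varphi_i\Phi_i\in H^{\frac{1}{2}}(\g^i)$ for each $i=1,2,3$. Parametrizing $\g^i$ by arclength I identify it with a bounded interval $I\subset\R$ and work with the Gagliardo seminorm
$$
[\psi]^2_{H^{\frac{1}{2}}(I)}:=\int_I\int_I\frac{|\psi(s)-\psi(\sigma)|^2}{|s-\sigma|^2}\,\mathrm{d}s\,\mathrm{d}\sigma\,.
$$

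First I would record the integrability of the two factors. Since $\varphi_i\in H^1(\g^i)$, the one–dimensional Sobolev embedding gives $\varphi_i\in C^0(\bar\g^i)\subset L^\infty(\g^i)$, and moreover $H^1(\g^i)\hookrightarrow H^{\frac{1}{2}}(\g^i)$ continuously; hence $\varphi_i\in H^{\frac{1}{2}}(\g^i)\cap L^\infty(\g^i)$. On the other hand $\Phi$ is assumed to lie in $H^{\frac{1}{2}}(\g)\cap C^0(\g)$, so $\Phi_i\in H^{\frac{1}{2}}(\g^i)\cap L^\infty(\g^i)$ as well. Thus both factors belong to $H^{\frac{1}{2}}\cap L^\infty$ on $I$.

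The core estimate is the standard splitting: writing $f:=\varphi_i$ and $g:=\Phi_i$, one has
$$
f(s)g(s)-f(\sigma)g(\sigma)=f(s)\bigl(g(s)-g(\sigma)\bigr)+g(\sigma)\bigl(f(s)-f(\sigma)\bigr)\,,
$$
whence, after squaring, dividing by $|s-\sigma|^2$ and integrating over $I\times I$,
$$
[fg]^2_{H^{\frac{1}{2}}(I)}\leq 2\|f\|^2_{L^\infty}[g]^2_{H^{\frac{1}{2}}(I)}+2\|g\|^2_{L^\infty}[f]^2_{H^{\frac{1}{2}}(I)}\,,
$$
while $\|fg\|_{L^2(I)}\leq\|f\|_{L^\infty}\|g\|_{L^2(I)}$. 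Combining these two bounds with the integrability established above shows $fg\in H^{\frac{1}{2}}(I)$, that is $\varphi_i\Phi_i\in H^{\frac{1}{2}}(\g^i)$; summing over $i$ yields $\varphi\Phi\in H^{\frac{1}{2}}(\g)$.

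The only delicate point worth flagging is the borderline nature of the exponent: in dimension one $H^{\frac{1}{2}}$ sits exactly at the critical regularity $s=\tfrac12$ and is not itself a multiplication algebra, so the $L^\infty$ control on \emph{both} factors is genuinely necessary rather than a convenience. This control is precisely what the hypotheses provide — through the embedding $H^1\hookrightarrow C^0$ for $\varphi_i$ and through the assumed continuity of $\Phi$ — which is the reason the statement is phrased with $H^{\frac{1}{2}}\cap C^0$ and not with $H^{\frac{1}{2}}$ alone.
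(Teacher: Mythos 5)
Your core estimate is exactly the paper's: split $\varphi\Phi(x)-\varphi\Phi(y)$ via the product rule and bound each piece by an $L^\infty$ norm of one factor times the Gagliardo seminorm of the other, with the $L^\infty$ control on $\varphi$ coming from the one-dimensional embedding $H^1\hookrightarrow C^0$. The one place you diverge is the preliminary reduction to individual branches, and the justification you give for it is not correct: the fact that $H^{\frac{1}{2}}$ functions on an interval have no pointwise trace does \emph{not} imply that the Gagliardo seminorm over $\g\times\g$ is controlled by the three branch seminorms. The cross terms $\int_{\g^i}\int_{\g^j}$ with $i\neq j$ diverge logarithmically near $x_0$ for a function equal to two different constants on two branches, even though each restriction is smooth; so branch-wise membership does not in general give membership in $H^{\frac{1}{2}}$ of the full network. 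Since $\widetilde{H}^1(\g)$ permits exactly such jumps at $x_0$ (only the sum of the three boundary values is constrained), the lemma can only be read with $H^{\frac{1}{2}}(\g)$ understood branch-wise, i.e. as $\bigoplus_i H^{\frac{1}{2}}(\g^i)$ — under which your reduction holds by definition rather than by the "no matching condition" argument you offer. The paper itself glosses over this by writing the double integral over all of $\g\times\g$; with the branch-wise reading, your computation and the paper's coincide (up to the factor $2$ from $|a+b|^2\leq 2|a|^2+2|b|^2$, which you correctly include and the paper drops).
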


\begin{proof}
We need to estimate the Gagliardo semi-norm. So
\begin{align*}
[\varphi\Phi]^2_{H^{1/2}}&:=\int_\g\int_\g \frac{|\varphi(x)\Phi(x)-\varphi(y)\Phi(y)|^2}{|x-y|^2}\dh(x)\dh(y) \\
&\leq \int_\g\int_\g |\Phi(y)|^2\frac{|\varphi(x)-\varphi(y)|^2}{|x-y|^2}\dh(x)\dh(y)\\
&\hspace{0.5cm}+\int_\g\int_\g |\varphi(x)|^2\frac{|\Phi(x)-\Phi(y)|^2}{|x-y|^2}\dh(x)\dh(y) \\
&\leq \|\Phi\|^2_{C^0} [\varphi]^2_{H^{1/2}} + \|\varphi\|^2_{L^\infty} [\Phi]^2_{H^{1/2}}\,.
\end{align*}
Using the Sobolev embedding $H^1(\g)\subset H^{\frac{1}{2}}(\g)\cap L^\infty(\g)$, we obtain that the above quantity is finite, and hence we conclude.
\end{proof}

\begin{remark}
The above result holds just requiring $\Phi\in H^{\frac{1}{2}}(\g)$, but the proof is longer. Since in our case we already know that
$\nabla_\g u^\pm \in H^{\frac{1}{2}}(\g)\cap C^{0,\alpha}(\g)$ for $\alpha\in(0,1/2)$, we prefer to give just this simplified version of the result.
\end{remark}

\begin{remark}
Notice that it is possible to write
\begin{equation}\label{eq:varsec}
\frac{\mathrm{d}^2}{\mathrm{d}t^2} {\ms\bigl((u_t,\g_t);U\bigr)}_{|t=s}=\partial^2\ms\bigl((u_s,\g_s);U\bigr)\bigl[ (X\cdot\nu^1,X\cdot\nu^2,X\cdot\nu^3) \bigr]+R_s\,,
\end{equation}
where $R_0$ vanishes whenever $(u,\g)$ is a critical triple point.
\end{remark}

We now introduce the space where we will prove the local minimimality result.

\begin{definition}\label{def:admdiff}
Given $\delta>0$, we denote by the symbol $\diffc$ the space of all the diffeomorphisms $\Phi:\bar{\o}\rightarrow\bar{\o}$, with
$\Phi=\id$ in $(\o\meno U)\cup\pdo$, such that $\|\Phi-\id\|_{W^{2,\infty}(\g;\bar{\o})}<\delta$.
\end{definition}

Notice that we only require $W^{2,\infty}$-closeness of $\Phi$ to the identity on the set $\g$.
As one would expect, the non negativity of the second variation is a necessary condition for local minimality, as shown in the following result.
Since the proof is just technical, it will be postponed in the appendix.

\begin{proposition}\label{prop:pos}
Let $(u,\g)$ be a critical triple point such that there exists $\delta>0$ with the following property:
$$
\ms\bigl( (u,\g);U \bigr) \leq \ms\bigl( (v,\g_{\Phi});U \bigr)\,,
$$
for every diffeomorphisms $\Phi:\bar{\o}\rightarrow\bar{\o}$ with $\Phi=\id$ on $\pdo\cup (\o\meno U)$ satisfying $\|\Phi-\id\|_{C^2(\g;\bar{\o})}<\delta$, and every $v\in H^1(\o\,\meno \,\g_\Phi)$ such that $v=u$ in $(\o\meno U)\cup\pdo$.
Then
$$
\partial^2\ms\bigl((u,\g); U\bigr)[\p]\geq0\,,\quad\quad\quad\mbox{for every }\p\in\widetilde{H}^1(\g)\,.
$$
\end{proposition}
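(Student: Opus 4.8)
The plan is to prove non-negativity of the second variation by contradiction: assuming there exists $\p \in \widetilde{H}^1(\g)$ with $\partial^2\ms\bigl((u,\g);U\bigr)[\p] < 0$, I would construct an admissible family of diffeomorphisms $(\Phi_t)_t$ whose normal velocity on $\g$ realizes $\p$, and then use the second-order Taylor expansion to contradict the assumed local minimality. Concretely, given $\p = (\p_1,\p_2,\p_3)$ with $(\p_1+\p_2+\p_3)(x_0)=0$, I would build a vector field $X$ on a neighborhood of $\g$ satisfying $X \cdot \nu^i = \p_i$ on each $\g^i$, with $X = 0$ near $\mathcal{S}$ and $X \cdot \nu_{\po} = 0$ along $\po$, and then set $\Phi_t := \id + tX$ (suitably cut off and corrected to remain a bijection of $\bar\o$ onto itself for small $t$). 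The compatibility condition $(\p_1+\p_2+\p_3)(x_0)=0$ built into $\widetilde{H}^1(\g)$ is precisely what allows $X$ to be defined consistently at the triple junction $x_0$, so that the three arcs can be displaced coherently by a single map $\Phi_t$.

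Once such a family is fixed, I would apply the definition of local minimality to the competitor pair $(u_t,\g_t)$, where $\g_t := \Phi_t(\g)$ and $u_t := u_{\Phi_t}$ is the Dirichlet-energy minimizer from Definition~\ref{def:umod}. Setting $g(t) := \ms\bigl((u_t,\g_t);U\bigr)$, the hypothesis gives $g(t) \geq g(0)$ for all small $t$. By criticality we have $g'(0)=0$ (the first variation \eqref{eq:firstvar} vanishes, since $f=0$ on $\g$ and $X\cdot\eta = 0$ on $\bar\g \cap \po$ for a critical configuration). A second-order Taylor expansion then yields
\[
0 \leq g(t) - g(0) = \frac{t^2}{2}\, g''(0) + o(t^2),
\]
whence $g''(0) \geq 0$. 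By Proposition~\ref{prop:svcp} and the remark following Definition~\ref{def:h1} (formula \eqref{eq:varsec} with $R_0 = 0$ for critical triple points), we have $g''(0) = \partial^2\ms\bigl((u,\g);U\bigr)[\p]$, provided the normal trace of $X$ on $\g$ equals $\p$. This contradicts the assumption $\partial^2\ms\bigl((u,\g);U\bigr)[\p] < 0$, forcing non-negativity for every $\p \in \widetilde{H}^1(\g)$.

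The main obstacle is the construction of the vector field $X$, and hence the family $(\Phi_t)_t$, with the required regularity and matching normal data at the singular point. Away from $x_0$ this is standard: one extends $\p_i \nu^i$ off each arc $\g^i$ using the signed-distance tubular coordinates and patches with cutoffs. The delicate point is behaviour near $x_0$, where the three tubular neighborhoods overlap and the normal directions $\nu^1,\nu^2,\nu^3$ differ; here one must exploit the constraint $(\p_1+\p_2+\p_3)(x_0)=0$ to glue the three partial definitions into a single field $X$ that is Lipschitz (indeed of the regularity demanded in Definition~\ref{def:family}) across the junction, while also respecting the boundary conditions $X=0$ on $(\o\meno U)\cup\pdo$ and $X\cdot\nu_{\po}=0$ on $\po\meno\pdo$. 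A further technical check is that $\Phi_t := \id + tX$, after any needed correction to preserve the decomposition of $\o\meno\bar\g$ into the three components $\o^1,\o^2,\o^3$, is genuinely an admissible bijection for small $|t|$, so that the formula of Proposition~\ref{prop:svcp} applies verbatim. Since these constructions are purely technical and do not illuminate the underlying mechanism, the detailed verification is relegated to the appendix as announced.
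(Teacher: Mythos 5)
Your overall route is the same as the paper's: realize $\p$ as the initial normal velocity of an admissible flow, use criticality to get $g'(0)=0$ and local minimality to get $g''(0)\geq 0$, and identify $g''(0)$ with $\partial^2\ms\bigl((u,\g);U\bigr)[\p]$ via \eqref{eq:varsec} with vanishing remainder. You also correctly isolate the junction compatibility condition $(\p_1+\p_2+\p_3)(x_0)=0$ as what makes a single-valued field possible at $x_0$; in the paper this appears concretely as the solvability condition for the linear system determining the tangential corrections $b^i(x_0)$ in the ansatz $Y=\p^i\nu_i+b^i\tau_i$ near $x_0$, solvable precisely because the arcs meet at angles $2\pi/3$.

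There is, however, one step that fails as written. You propose to construct, for an \emph{arbitrary} $\p\in\widetilde{H}^1(\g)$, a vector field $X$ with $X\cdot\nu^i=\p_i$ on each $\g^i$ and with the regularity required by Definition \ref{def:family} (so that $\Phi_t$ is $C^3$ on $\g$ and a $C^1$ diffeomorphism on each $\o^i$). This is impossible: the normal trace on $\g^i$ of such a field is at least $C^1$, whereas $\p_i$ is merely $H^1$. The obstruction is not the gluing at $x_0$ or the boundary conditions on $\po$ --- it is the roughness of the data itself. The paper therefore opens with a reduction you omit: mollify each $\p_i$, restore the constraint at $x_0$ by subtracting the constant $(\p^1_\varepsilon+\p^2_\varepsilon+\p^3_\varepsilon)(x_0)$ from one component, and use the continuity of $\partial^2\ms\bigl((u,\g);U\bigr)$ under $H^1(\g)$ convergence (which in turn rests on Lemma \ref{lem:fss} and the $H^{\pm 1/2}$ duality defining $v_\p$) to transfer the inequality from smooth $\p$ to general $\p$. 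A second layer of approximation is also needed in the construction itself, since the glued field is only $C^1$ after Whitney extension and must be mollified and then corrected to be tangent to $\po$ before generating the flow; again one concludes by continuity of the quadratic form. With these approximations inserted, the rest of your argument matches the paper's proof.
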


The following strict stability condition will be shown to imply the local minimality result (see Theorem~\ref{thm:minC2}).

\begin{definition}
We say that a critical triple point $(u,\g)$ is \emph{strictly stable} in an admissible subdomain $U$ if
$$
\partial^2\ms\bigl((u,\g); U\bigr)[\p]>0\quad\quad\quad\mbox{for every }\p\in\widetilde{H}^1(\g)\meno\{0\}\,.
$$
\end{definition}


\section{A local minimality result}

The aim of this section is to prove the following result.

\begin{theorem}\label{thm:minC2}
Let $(u,\g)$ be a strictly stable critical triple point. Then there exists $\bar{\delta}>0$ such that
$$
\ms\bigl((u,\g);U\bigr)\leq\ms\bigl((v,\g_{\Phi});U\bigr)\,,
$$
for every $\Phi\in\mathcal{D}_{\bar{\delta}}(\o;U)$ and every $v\in H^1(\o\meno \g_\Phi)$ such that $v=u$ in $(\o\meno U)\cup\pdo$.
Moreover equality holds true only when $\g_{\Phi}=\g$ and $v=u$.
\end{theorem}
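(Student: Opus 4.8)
The plan is to prove that $g(t):=\ms((u_t,\g_t);U)\ge g(0)$ for a suitable interpolating family $(\Phi_t)_{t\in[0,1]}$ connecting the identity to the given competitor $\Phi$, via the Taylor expansion \eqref{connection}. Since $(u,\g)$ is critical, $g'(0)=0$, so the sign of $g(1)-g(0)$ is governed by $\int_0^1(1-t)g''(t)\,\mathrm{d}t$, and by Remark~\ref{rem:secvar} each $g''(t)$ is again a second variation, namely $\partial^2\ms((u_t,\g_t);U)[X_t\cdot\nu_t]+R_t$. The first step, and the central technical construction, is to produce the family $(\Phi_t)_t$: given the target diffeomorphism $\Phi$ with $\|\Phi-\id\|_{W^{2,\infty}}\le\bar\delta$ and $\g_\Phi\neq\g$, I would interpolate so that $\g_t:=\Phi_t(\g)$ stays $C^2$-close to $\g$ and, crucially, the tangential part $X_t^{\|}$ of the velocity field is controlled by the normal part $X_t\cdot\nu_t$. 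This is the re-parametrization idea imported from \cite{CicLeoMag}, adapted to the triple junction; I expect this construction to be exactly the content of Proposition~\ref{prop:x} referenced in the excerpt, so I would invoke it rather than rebuild it.

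Next I would establish the two quantitative estimates that make the integrand of \eqref{connection} positive. For the quadratic form, I would use the strict stability hypothesis $\partial^2\ms((u,\g);U)[\p]>0$ on $\widetilde{H}^1(\g)\meno\{0\}$ together with a continuity/coercivity argument: since $\partial^2\ms((u,\g);U)$ is strictly positive on the unit sphere of the Hilbert space $\widetilde{H}^1(\g)$, and since the nonlocal term $\int_U|\nabla v_\p|^2\,\mathrm{d}x$ depends continuously on the data, one obtains a uniform coercivity constant $C>0$ with
\begin{equation*}
\partial^2\ms\bigl((u_t,\g_t);U\bigr)[X_t\cdot\nu_t]\ge C\,\|X_t\cdot\nu_t\|^2_{\widetilde{H}^1}
\end{equation*}
for all $t$, provided $\g_t$ is $C^2$-close enough to $\g$ that the coefficients $H_t$, the trace data $\nabla_{\g_t}u_t^\pm$, and the boundary terms remain within a fixed neighborhood of those at $t=0$. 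This requires a continuous-dependence lemma for the map $\g_t\mapsto u_t$ and its gradient traces, which in turn rests on the elliptic regularity stated in Theorem~\ref{thm:regu} and on Proposition~\ref{prop:pru}.

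For the remainder, I would exploit that $R_t$ is built from terms of the form collected in \eqref{eq:secvar} that survive when criticality fails only slightly, together with explicitly tangential contributions such as $X_t^{\|}\cdot\ng(X\cdot\nu)$ and $D\nu[X^{\|},X^{\|}]$. Using the tangential-control property guaranteed by the construction of $(\Phi_t)_t$, each such term is bounded by $\|X_t^{\|}\|\cdot\|X_t\cdot\nu_t\|$ or $\|X_t^{\|}\|^2$, hence by a factor that tends to zero with $\bar\delta$ times $\|X_t\cdot\nu_t\|^2$; the terms that are not manifestly tangential (those multiplied by $f_t$) are small because $f_t\to 0$ as $\g_t\to\g$ by criticality. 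Choosing $\bar\delta$ small enough then yields $|R_t|\le\varepsilon\|X_t\cdot\nu_t\|^2$ with $\varepsilon<C$. Combining the two estimates in \eqref{connection} gives $g(1)-g(0)\ge(C-\varepsilon)\int_0^1(1-t)\|X_t\cdot\nu_t\|^2\,\mathrm{d}t\ge 0$.

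Finally, for the minimizing pair $v$, I would note that for fixed $\g_\Phi$ the function $u_\Phi$ is the unique Dirichlet-energy minimizer, so $\ms((u_\Phi,\g_\Phi);U)\le\ms((v,\g_\Phi);U)$, reducing the general competitor to the case $v=u_\Phi$ already treated. The strict inequality when $\g_\Phi\neq\g$ follows because strict stability forces $X_t\cdot\nu_t\not\equiv 0$ along the interpolation (otherwise $\g_t$ would not move off $\g$), making the coercive lower bound strictly positive; equality throughout then forces $\g_\Phi=\g$ and, by uniqueness of the Dirichlet minimizer, $v=u$. The main obstacle is unquestionably the first step: constructing the interpolating family with simultaneous $C^2$-closeness and tangential control \emph{across the triple junction}, where the three arcs meet and the naive normal-graph parametrization degenerates; reconciling the three local re-parametrizations into one globally $W^{2,\infty}$ diffeomorphism of $\bar\o$ while keeping the tangential estimate uniform up to $x_0$ is the crux of the whole argument.
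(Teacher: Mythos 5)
Your proposal follows essentially the same route as the paper: the Taylor expansion \eqref{connection} with $g'(0)=0$, the interpolating family of Proposition~\ref{prop:x} giving tangential control, the uniform coercivity of the quadratic form along the flow (the paper's Proposition~\ref{prop:cont}), the remainder bounded via the smallness of $f_t$ together with estimates \eqref{eq:stima1}--\eqref{eq:stima2}, and the reduction to $v=u_\Phi$ by Dirichlet minimality. The only piece you leave implicit is that Proposition~\ref{prop:x} requires $\Phi\in C^3$ with $\Phi(x_0)\neq x_0$, so the general $W^{2,\infty}$ competitor (possibly fixing $x_0$) must be reached by the routine approximation steps the paper carries out at the end.
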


The rest of this section is devoted to the proof of the above result.


\subsection{Construction of the admissible family}

In this section we construct a suitable admissible family connecting a critical point $\g$ with a competitor that satisfies some additional assumptions.

\begin{proposition}\label{prop:x}
Let $(u,\g)$ be a critical triple point and fix $\varepsilon>0$. Then it is possible to find a constant $\bar{\delta}_1=\bar{\delta}_1(\g, \varepsilon)>0$ and constants $C_1>0$, $C_2>0$, depending only on $\g$ and $\bar{\delta}_1$, with the following property:

for any diffeomorphism $\Phi\in C^3(\bar{\o};\bar{\o})$ such that
$\|\Phi-\id\|_{C^2(\g;\bar{\o})}<\bar{\delta}_1$ and $\Phi(x_0)\neq x_0$,
it is possible to find an admissible family $(\Phi_t)_{t\in[0,1]}$ such that
$$
\|\Phi_t-\id\|_{C^2(\g;\bar{\o})}<\varepsilon\,,\quad\quad\Phi_1(\g)=\Phi(\g)\,.
$$
Moreover the following estimates hold true for each time $t\in[0,1]$:
\begin{equation}\label{eq:stima1}
\|X_t\cdot\tau_t\|_{L^2(\g_t)}\leq C_1 \|X_t\cdot\nu_t\|_{L^2(\g_t)}\,,
\end{equation}
\begin{equation}\label{eq:stima2}
\|Z_t\cdot\nu_t\|_{L^1(\g_t)}\leq C_2\|X_t\cdot\nu_t\|_{L^2(\g_t)}\,,
\end{equation}
where we recall that $\nu_t$ and $\tau_t$ are the normal and the tangent vector field on $\g_t$ respectively and that the objects $X_t:=\dot{\Phi}_t\circ\Phi_t^{-1}$, $Z_t:=\ddot{\Phi}_t\circ\Phi_t^{-1}$ are well defined on $\g_t$.\\
\end{proposition}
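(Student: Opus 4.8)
The plan is to realize the competitor curve $\Phi(\g)$ as the time-one image of a \emph{normal-graph flow} over $\g$, corrected near the triple point so that the three arcs continue to meet at a single moving junction, and then to read both estimates off the explicit form of the flow. First I would fix arc-length parametrizations $\gamma^i:[0,L_i]\to\R^2$ with $\gamma^i(0)=x_0$, $\gamma^i(L_i)=x_i$, and use the signed-distance tubular coordinates $(s,r)\mapsto\gamma^i(s)+r\nu^i(s)$ from Section~\ref{sec:prel}. Since $\|\Phi-\id\|_{C^2(\g;\bar{\o})}<\bar{\delta}_1$, each image arc $\Phi(\g^i)$ is $C^2$-close to $\g^i$ and, away from a fixed $s$-neighborhood of $x_0$, is a normal graph $\{\gamma^i(s)+\psi^i(s)\nu^i(s)\}$ with $\|\psi^i\|_{C^2}\le C(\g)\bar{\delta}_1$. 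I would then choose the affine junction path $x_0(t):=(1-t)x_0+t\Phi(x_0)$, with velocity $V:=\Phi(x_0)-x_0$, and define $\Phi_t$ so that outside the junction neighborhood $\Phi_t(\gamma^i(s))=\gamma^i(s)+t\psi^i(s)\nu^i(s)$, while on that neighborhood $\Phi_t$ is blended through a cutoff $\theta(s)$ (with $\theta\equiv1$ at $s=0$) between the rigid junction motion $x\mapsto x+tV$ and the graph, so that $\Phi_t(x_0)=x_0(t)$ for every $i$. Extending by the identity outside $U$ through the tubular coordinates produces an admissible family with $\Phi_1(\g)=\Phi(\g)$, and the bound $\|\psi^i\|_{C^2}\le C\bar{\delta}_1$ together with the smoothness of the (fixed) cutoff yields $\|\Phi_t-\id\|_{C^2(\g;\bar{\o})}<\varepsilon$ once $\bar{\delta}_1=\bar{\delta}_1(\g,\varepsilon)$ is small.

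For the tangential estimate \eqref{eq:stima1} I would split $\g_t$ into the graph region and the junction region. On the graph region the velocity at $y=\gamma^i(s)+t\psi^i(s)\nu^i(s)$ is $X_t(y)=\psi^i(s)\nu^i(s)$, which is normal to the \emph{reference} curve $\g^i$; since $\nu_t,\tau_t$ differ from $\nu^i,\tau^i$ by $O(\bar{\delta}_1)$, one gets $|X_t\cdot\tau_t|\le C\bar{\delta}_1\,|\psi^i|$ while $|X_t\cdot\nu_t|\ge(1-C\bar{\delta}_1)|\psi^i|$, giving pointwise domination of the tangential part by the normal part.

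The delicate region is the junction neighborhood, where the velocity is essentially the junction velocity $V$ and carries an uncontrolled tangential component $V\cdot\tau^i$. Here the decisive geometric fact is that the three normals are at mutual angle $2\pi/3$, so that $\nu^1(x_0)+\nu^2(x_0)+\nu^3(x_0)=0$ and, in particular, they span $\R^2$ with $|V|\le C(\g)\sum_{i=1}^3|V\cdot\nu^i(x_0)|$. Because each $V\cdot\nu^i$ coincides, up to $O(\bar{\delta}_1)$, with the normal velocity $X_t\cdot\nu_t$ read on $\g_t^i$ near the junction, we obtain $\ell_0\,|V|^2\le C(\g)\|X_t\cdot\nu_t\|_{L^2(\g_t)}^2$, where $\ell_0$ is the (fixed) length of the junction neighborhood; hence the entire junction velocity, and in particular its tangential components, is controlled in $L^2$ by $\|X_t\cdot\nu_t\|_{L^2(\g_t)}$. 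Patching the two regions with the cutoff gives \eqref{eq:stima1} with $C_1=C_1(\g,\bar{\delta}_1)$. It is precisely this spanning identity — mirrored in the constraint $(\p_1+\p_2+\p_3)(x_0)=0$ defining $\widetilde{H}^1(\g)$ — that makes the tangential junction motion controllable by the normal data rather than arbitrary.

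Finally, the acceleration estimate \eqref{eq:stima2} follows from the same description: the height functions are linear in $t$ and the junction path is affine, so $\ddot{\Phi}_t$ is generated only by the second-order terms of the tubular-coordinate map (controlled by the curvatures of the $\g^i$, hence by $\g$) and by the fixed cutoff, whence $\|Z_t\cdot\nu_t\|_{L^1(\g_t)}\le C(\g)\|X_t\cdot\nu_t\|_{L^2(\g_t)}$ using the junction-spanning bound once more. The main obstacle throughout is the junction region: one must simultaneously keep the three arcs meeting at a single $C^2$ point, preserve $C^2$-closeness to the identity, and retain the normal-controls-tangential property, and it is the geometry of the $2\pi/3$ junction that resolves all three. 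The hypothesis $\Phi(x_0)\ne x_0$ only guarantees that the junction path is nondegenerate, the case $\Phi(x_0)=x_0$ reducing to the pure normal-graph construction.
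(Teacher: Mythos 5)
Your overall architecture is the right one and matches the paper's in spirit: normal graph away from the junction, a special construction near $x_0$, and the use of the $2\pi/3$ geometry (equivalently, the fact that $V:=\Phi(x_0)-x_0$ cannot be nearly tangent to more than one of the arcs) to transfer control of the tangential junction motion to the normal components on the other two arcs. However, there is a genuine quantitative gap in the junction region, and it is precisely the point the paper's construction is designed to handle.

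The gap is the choice of a \emph{fixed} cutoff length for the transition between the rigid motion $x\mapsto x+tV$ and the normal graph. The constraint $\Phi_1(\g)=\Phi(\g)$ pins down the normal component of the total displacement: on each $\g^j$ it must equal the actual normal separation between $\g^j$ and $\Phi(\g^j)$. In the worst case (Figure \ref{fig:worst} of the paper) $V$ is essentially tangent to $\g^1$, the curve $\Phi(\g^1)$ lies on $\g^1$, and $\Phi(\g^2)$ returns onto $\g^2$ outside a ball of radius $r\sim\sqrt{|V|}$ --- this is the best one can guarantee from $\|\Phi-\id\|_{C^2}<\bar{\delta}_1$, since two $C^2$-close curves through points at distance $|V|$ can separate only to second order. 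Hence, for any homotopy of the type you describe,
$$
\int_{\g^2}|X_t\cdot\nu^2_t|^2\dh\;\lesssim\;|V|^2\cdot\sqrt{|V|}\;=\;|V|^{5/2}\,,
$$
and similarly on $\g^3$, while your fixed-length decay of the tangential displacement on $\g^1$ (which must start from the value $\approx|V|$ at $x_0$, since $\Phi_1(x_0)=x_0+V$) yields $\int_{\g^1}|X_t\cdot\tau^1_t|^2\dh\sim\ell_0|V|^2$. The ratio blows up like $|V|^{-1/2}$ as $|V|\to0$, so \eqref{eq:stima1} cannot hold with a constant $C_1$ depending only on $\g$ and $\bar{\delta}_1$. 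Your spanning inequality $|V|\le C\sum_i|V\cdot\nu^i(x_0)|$ is a pointwise statement at $x_0$; the $L^2$ estimate requires knowing over how long a stretch the normal displacement stays comparable to $|V|$, and that length is only $\sqrt{|V|}$, not a fixed $\ell_0$. The paper's remedy is exactly to compress the tangential reparametrization of $\g^1$ into an interval of length $\sim L\sqrt{s_0}$ (the diffeomorphisms $G_L$ in Case 2 of Step 1, together with the two-parabolas observation in Step 5), so that both sides of the estimate scale like $|V|^{5/2}$. A secondary issue: the convex blend $\theta(s)(x+tV)+(1-\theta(s))(\gamma^i(s)+t\psi^i(s)\nu^i(s))$ does not in general satisfy $\Phi_1(\g)=\Phi(\g)$ on the transition region (the blended point at $t=1$ need not lie on $\Phi(\g^i)$); this is repairable by reparametrizing along $\Phi(\g^i)$ instead of interpolating in $\R^2$, but any such repair reintroduces the tangential displacement whose decay scale is the real obstruction above.
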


\emph{Heuristics}.
Before starting with the proof, we would like to give the reader an general overview of what we are going to do.
The idea of the construction is similar to the one used in \cite{CicLeoMag}.
As explained in the introduction, since estimates \eqref{eq:stima1} and \eqref{eq:stima2} are not explicitly present in the above work, we decided, for reader's convenience, to give here another construction, where the relevant features that allow to obtain the estimates are explicitly pointed out.
Moreover, since the curves $\Phi(\g^i)$ do not necessarily meet with equal angles, we cannot use Whitney extension Theorem to extend the functions we will define on each $\g^i$ to a function of the whole $\o$. For, we use Lemma \ref{lem:ext}, that is explicitly designed for our purposes, where the extension can also failed to be a global diffeomorphism of $\o$, but with the essential properties that allow us to perform the computations of the first and the second variations (see Definition \ref{def:family}).

The source of difficulties is, of course, the presence of the triple point.
Indeed, for points in $\bar{\g}$ far for $x_0$, we can use a standard construction. Namely, we can define the family $(\Phi_t)_t$ as the flow of a vector field that is (close to) an extension of the normal vector field of $\bar{\g}$. This part of the construction is easy. 
The tricky part is when we are closed to $x_0$. The idea we are going to use is the following: we first construct a vector field $Y$ on $\bar{\g}\cap B_\mu(x_0)$, for some
$\mu>0$, such that $x+Y(x)\in\Phi(\g)$ and $Y$ has null tangential component on $\g\cap B_\mu(x_0)\meno B_{\frac{\mu}{2}}(x_0)$. This last condition will be used to glue together the vector field $Y$ with the one defined far from $x_0$. Then we define our diffeomorphisms $\Phi_t$'s on $\bar{\g}\cap B_\mu(x_0)$ as
$$
\Phi_t(x):=x+tY(x)\,.
$$
In order to obtain \eqref{eq:stima1} and \eqref{eq:stima2}, we need our vector field $Y$ to satisfy the following two conditions:
\begin{itemize}
\item[(i)] $Y\in C^3(\bar{\g}\cap B_\mu(x_0))$, with $\|Y\|_{C^2(\bar{\g}\cap B_\mu(x_0))}$ sufficiently small,
\item[(ii)] $\|Y\cdot\nu\|_{L^2(\bar{\g}\cap B_\mu(x_0))}\geq M \|Y\cdot\tau\|_{L^2(\bar{\g}\cap B_\mu(x_0))}$, for some $M>0$.
\end{itemize}
Thus, we wonder how to ensure the validity of the above conditions.
The first one is not difficult to achieve by using the assumption that $\Phi$ is closed to the identity in the $C^2$ norm.
Condition $(ii)$ is the tricky one. It suggests us to consider the sets
$$
\mathcal{C}^i:=\Bigl\{v\in B_\mu \;:\; \Bigl|\frac{v}{|v|}\cdot\tau^i(x_0)\Bigr|\geq \frac{3}{5}\Bigl|\frac{v}{|v|}\cdot\nu^i(x_0)\Bigr|\,\Bigr\}\,,\\
$$
and to use different constructions when $Y(x_0):=\Phi(x_0)-x_0\in \mathcal{C}^i$ for some $i$, or when this condition is not satisfied.
In the latter one, it is easy to define $Y$ (by letting its normal part to vanish) in such a way that
$|Y\cdot\nu|\geq C|Y\cdot\tau|$. This pointwise estimate is enough to ensure the validity of the integral estimate $(ii)$.
If instead $Y(x_0)\in \mathcal{C}^i$ for some $i$, then it is not always possible to obtain an estimate of the type
\[
\|X_t\cdot\tau_t\|_{L^2(\g^i_t\cap B_\mu(x_0))}\leq C \|X_t\cdot\nu_t\|_{L^2(\g^i_t\cap B_\mu(x_0))}\,.
\]
Just consider the the following example: each $\g^i$ is a segments and $\Phi$ is, around $x_0$, a translation in the direction of, let us say, $\g^1$. In this case, an estimate like the above one cannot be true for $\g^1$, since $Y$ has only tangential part on that curve.
The idea is to take advantage of the fact that we have a triple point.
Thus, if $Y(x_0)\in \mathcal{C}^1$, then clearly $Y(x_0)\not\in \mathcal{C}^2\cup \mathcal{C}^3$ and so, for $j=2,3$, the inequality $|Y(x_0)\cdot\tau^j(x_0)|\leq C|Y(x_0)\cdot\nu^j(x_0)|$ holds true.
This means that, for $j=2,3$, we can directly obtain the desired integral estimate from the pointwise one.
Now it is clear that the only chance we have in order to satisfy $(ii)$, is to estimate the tangential part of $Y$ along $\g^1$ with its normal part along $\g^2$, \emph{i.e.}, to obtain the following estimate
\begin{equation}\label{eq:sth}
\|Y\cdot\tau^1\|_{L^2(\g^1_t\cap B_\mu(x_0))}\leq C \|Y\cdot\nu^2\|_{L^2(\g^2_t\cap B_\mu(x_0))}\,,
\end{equation}
to hold true.

\begin{figure}[H]
\includegraphics[scale=1]{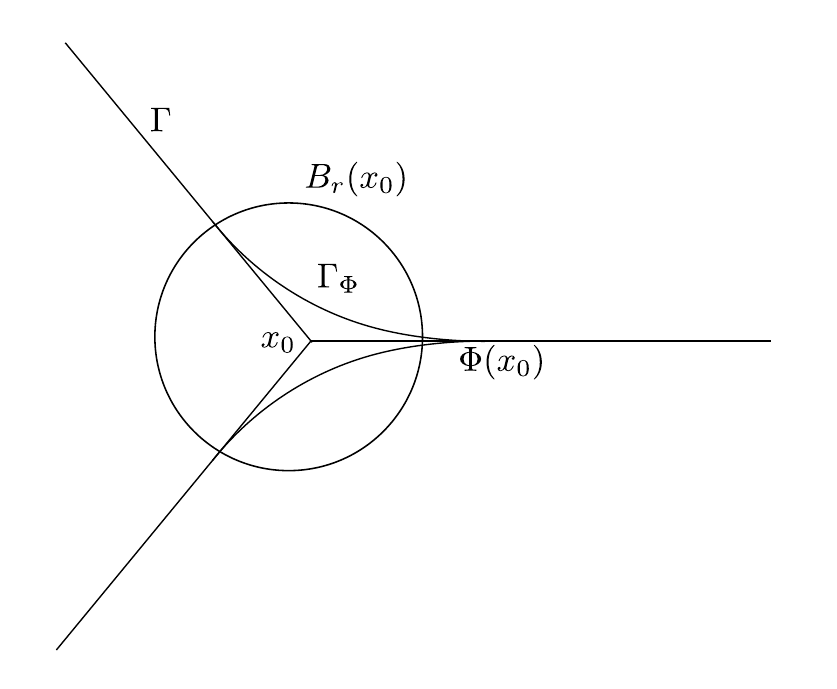}
\caption{The original triple point (bold line) and its image under the diffeomorphism (dashed line).}
\label{fig:worst}
\end{figure}

Are we sure that we can do it?
The worst case scenario is the one shown in Figure \ref{fig:worst}: the curve $\Phi(\g^1)$ is completely over $\g^1$, and $\Phi(\g^2)$ is over $\g^2$ out of a ball $B_r(x_0)$.
Using the closeness of $\Phi$ to the identity in the $C^2$ norm, a very rough estimate allows us to estimate from below $r$ with a term of the order of $\sqrt{|Y(x_0)|}$.
Thus, we need to construct our vector field $Y$ around $x_0$ in such a way that:
\begin{itemize}
\item $|Y\cdot\tau^1|\leq C|Y(x_0)|$ on $\g^1\cap B_{C\sqrt{|Y(x_0)|}}(x_0)$,
\item $Y\cdot\tau^1\equiv0$ on $\g^1\meno B_{C\mu}(x_0)$,
\item $|Y\cdot\nu^2|\geq C|Y(x_0)|$ on $\g^2$.
\end{itemize}
If the above conditions are in force, then it is easy to see that \eqref{eq:sth} holds true.
We will use two different strategies to let the tangential part of $Y$ vanish: on $\g^2$ we just use a cut off function, that will ensure the validity of the last condition.
The idea for constructing the vector field on $\g^1$ is to look at that curve and at $\Phi(\g^1)$ near $x_0$ as graphs, with respect to the axes given by $\tau^1(x_0)$ and $\nu^1(x_0)$, of two functions $h^1\in C^3([0,s])$ and $\widetilde{h}^1\in C^3([a,s])$ respectively (where $a\neq0$ since $\Phi(x_0)\neq x_0$).
We want the projection of $Y(x_0)$ on $\tau^1(x_0)$ to go to zero, and then use the fact that, for $s$ small, $Y(s,h(s))$ is almost aligned with $\nu^1(s,h(s))$. For, we notice that if $Y$ is a vector field connecting a point $(s,h(s))$ of $\mathrm{graph}(h)$ with a point $(t,h^1(t))$ of $\mathrm{graph}(h^1)$, then we are interested in making the quantity $t-s$ disappearing. Thus, we are lead to consider diffeomorphisms $G:[0,s]\rightarrow[a,s]$ and requiring that they are the identity on $[\bar{s},s]$, for some $\bar{s}\in(a,s)$.\\

\begin{proof}
The proof is divided in three parts: we first define our functions on $\bar{\g}$, then we extend them to admissible ones defined in the whole
$\bar{\o}$ and finally we will show that our construction is such that estimate \eqref{eq:stima1} and \eqref{eq:stima2} hold true. We start with some preliminaries.\\

\emph{Preliminaries}. The constant $C>0$ that will appear in the following computations may change from line to line, but we will keep the same notation. Fix $\mu>0$ such that
\begin{itemize}
\item $\nu_i(x)\cdot\nu_i(x_0)\geq\frac{2}{3}$, for $x\in \g^i\cap B_\mu(x_0)$,
\item $B_{4\mu}(x_0)\Subset\o$,
\item $(\g^i)_\mu$ is a tubular neighborhood of $\g^i$,
\item the sets $(\g^i)_\mu\meno B_{3\mu}$ are disjoint,
\item $\g^i\cap B_\mu(x_0)$ is a graph with respect to the axes given by $\tau^i(x_0)$ and $\nu^i(x_0)$.
\end{itemize}
We will take $\bar{\delta}_1<\frac{\mu}{2}$.
Moreover, we will fix a cut-off function $\chi:\R\rightarrow[0,1]$ such that $\chi\equiv0$ on $[1,+\infty)$ and $\chi\equiv1$ on
$(-\infty,\frac{1}{2}]$.\\


\emph{Step 1: construction of the functions near $x_0$}. We define the functions $\Phi^O_t$ as
$$
\Phi^O_t(x):=x+tN(x)\,,
$$
for $x\in\bar{\g}\cap B_\mu(x_0)$, where the vector field $N$ will be constructed as follows.\\

\emph{Case 1: $x_0\not\in C^i$}. Write
$$
\frac{\Phi(x)-x}{|\Phi(x)-x|} = a_i(x)\tau^i(x) + b_i(x)\nu^i(x)\,,
$$
for some functions $a_i, b_i:\bar{\g}^i\cap B_\mu(x_0)\rightarrow\R$.
Notice that $\Phi(x_0)\neq x_0$ allows us to say that $\Phi(x)\neq x$ in a neighborhood of $x_0$.
Up to take a smaller $\mu$, we can suppose $|b_i(x)-b_i(x_0)|<\frac{1}{4}$ and $|a_i(x)-a_i(x_0)|<\frac{1}{4}$ for $x\in\g^i\cap B_\mu(x_0)$.
Notice that, since $x_0\not\in C^i$, we have $|b_i(x_0)|\geq \frac{3}{4}$, $|a_i(x_0)|\leq \frac{\sqrt{5}}{4}$.
Consider the unitary vector field $Y^i$ on $\bar{\g}^i\cap B_{3\mu}(x_0)$ given by
$$
Y^i(x):=\frac{\widetilde{Y}^i}{|\widetilde{Y}^i|}\,.
$$
where, if we define $\widetilde{\chi}(x):=\chi\Bigl(\frac{|x-x_0|^2}{\mu^2} \Bigr)$, we set
$$
\widetilde{Y}^i:=\widetilde{\chi}(x)\bigl( a_i(x)\tau^i(x) + b_i(x)\nu^i(x) \bigr) + ( 1- \widetilde{\chi}(x)) \nu^i(x)\,.
$$
Then there exists a constant $C>0$, depending only on $\g$ and $\chi$, such that
\begin{equation}\label{eq:ue}
|Y^i\cdot\nu^i|\geq C |Y^i\cdot\tau^i|\,,
\end{equation}
on $\g^i\cap B_{3\mu}(x_0)$. Indeed, by choosing $\bar{\delta}_1$ small enough, we have that
\begin{eqnarray*}
|\widetilde{\chi}(x) a_i(x) | &\leq & |\widetilde{\chi}(x)(a_i(x)-a_i(x_0))|+|\widetilde{\chi}(x)a_i(x_0)|\leq \frac{1}{4}+|a_i(x_0)|\\
&\leq& C \leq 1-|b_i(x)-1| \leq |1+\widetilde{\chi}(x)(b_i(x)-1)|\,,
\end{eqnarray*}
where in the second to last inequality we have used the fact that $|b_i(x)-1|\leq \frac{1}{2}$.
Moreover, it is possible to find a constant $C>0$ independent of $a_i(x_0)$, $b_i(x_0)$, such that $\|Y^i\|_{C^3(\bar{\g}^i\cap B_{3\mu}(x_0))}\leq C$.
We claim that it is possible to represent (a piece of)
$\Phi(\g^i)$ as a graph of class $C^3$ over $\g^i$, with respect to the vector field $Y^i$.
Namely, it is possible\footnote{This is an application of the implicit function theorem, by using the uniform estimate \eqref{eq:ue}} to find a function $\varphi^i\in C^3(\g^i\cap B_{3\mu}(x_0))$ such that
$$
x\mapsto x+\varphi^i(x)Y^i(x)
$$
is a diffeomorphisms of class $C^3$ from $\g^i\cap B_{3\mu}(x_0)$ to its image, that is contained in $\Phi(\g^i)$.
Finally, for any $\xi>0$ it is possible to find $\bar{\delta}_1>0$ such that if $\|\Phi-\id\|_{C^2(\g;\bar{\o})}<\bar{\delta}_1$, then
$\|\varphi\|_{C^3(\g^i\cap B_{3\mu}(x_0))}<\xi$. Define $N:=\varphi Y^i$.\\


\emph{Case 2: $x_0\in C^i$}. Consider the axes given by $\tau^i(x_0)$ and $\nu^i(x_0)$ centered at $x_0$, and denote by $s$ the coordinate with respect to $\tau^i(x_0)$.
Our assumptions on $\mu$ allow us to write $\g^i$ in a neighborhood of $x_0$ as a graph of a function $h_i$, with respect to the above axes.
We can suppose $\bar{\delta}_1>0$ so small such that the same is true also for $\Phi(\g^i)$, \emph{i.e.}, we can represent $\Phi(\g^i)$ in a
neigborhood of $\Phi(x_0)$ as the graph of a function $\widetilde{h}_i$ with respect to the same axes (see Figure ~\ref{fig:axes}).

\begin{figure}[H]
\includegraphics[scale=1.2]{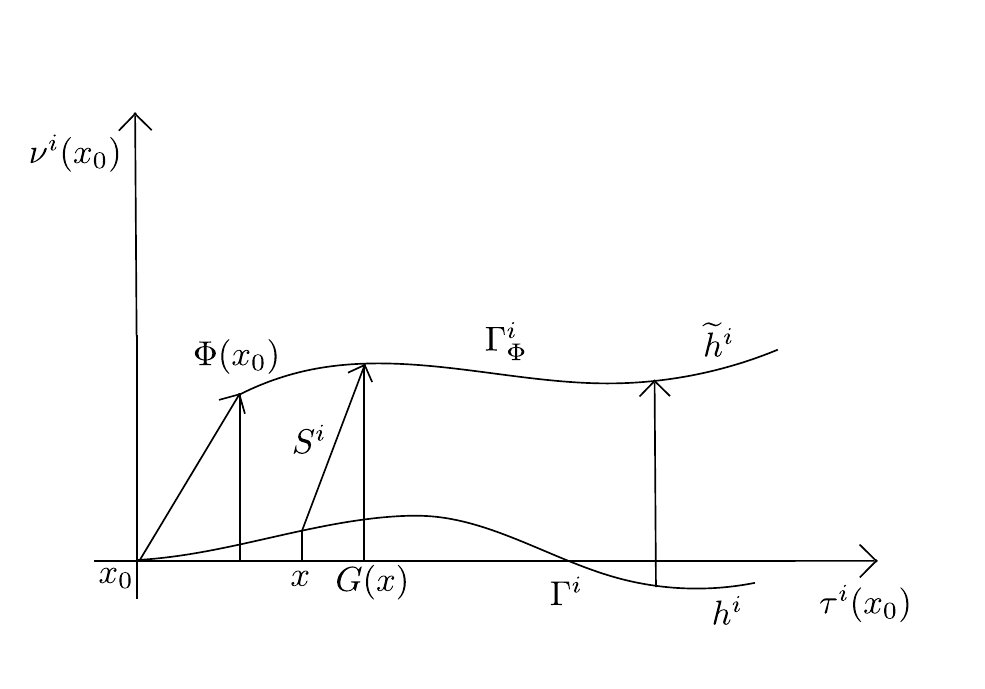}
\caption{The idea of the construction of the vector field $S^i$.}
\label{fig:axes}
\end{figure}

Now write $\Phi(x_0)-x_0 = s_0 \tau^i(x_0) + t_0 \nu^i(x_0)$, for some $s_0, t_0\in\R$, where we can also suppose $s_0<1$, if $\bar{\delta}_1$ is sufficiently small.
Since $x_0\in C^i$, we have that $C_1 s_0\leq |\Phi(x_0)-x_0|\leq C_2 s_0$, for some $C_1, C_2>0$.
For $L>1$ define the diffeomorphism $G_L:[0,(L+1)\sqrt{s_0}]\rightarrow[s_0, (L+1)\sqrt{s_0}]$ by
$$
G_L(s):=s+\chi\Bigl( \frac{s}{L\sqrt{s_0}} \Bigr)s_0\,.
$$
Notice that
\begin{equation}\label{eq:vic}
|G'_L(s)-1|\leq C\frac{\sqrt{s_0}}{L}\,,\quad\quad\quad\quad |G''_L(s)|\leq \frac{C}{L^2}\,.
\end{equation}
Moreover $G_L$ is the identity in $[L\sqrt{s_0}, (L+1)\sqrt{s_0}]$.
Now define the vector field
$$
S^i\bigl( (s,h^i(s)) \bigr):= \bigl( G_L(s)-s, \widetilde{h}_i(G_L(s))-h_i(s) \bigr)\,.
$$
Then, by a direct computation, we have
\begin{align*}
\|S^i\|_{C^2}&\leq \Bigl[ \Bigl( C\|h'\|_{C^0} + L^2(\|h'\|_{C^0} + \|\widetilde{h}'\|_{C^0})  \Bigr)s_0 \nonumber \\
&\hspace{0.3cm}+ \Bigl( \frac{C}{L} + \Bigl( 1+\frac{C}{L} \Bigr)\|\widetilde{h}_i'\|_{C^0} + \|h_i'\|_{C^0} \Bigr) \nonumber \\
&\hspace{0.3cm}+ \Bigl( \frac{C}{L^2} + \Bigl( 1+\frac{C}{L} \Bigr)^2\|\widetilde{h}_i''\|_{C^0} + \frac{C}{L^2}\|\widetilde{h}_i'\|_{C^0} + \|h_i''\|_{C^0} \Bigr)\Bigr]\,.
\end{align*}
Notice that $S^i\bigl((s,h(s))\bigr)=\lambda(s)\nu^i(x_0)$, for $\lambda(s)\in\R$ and $s\in[L\sqrt{s_0}, (L+1)\sqrt{s_0}]$.
We now want to extend the definition of the vector field $S^i$ to the whole $\bar{\g}^i\cap B_\mu(x_0)$.
Write
$$
\nu^i(x_0) = a_i(x)\tau^i(x) + b_i(x)\nu^i(x)\,,
$$
for some functions $a_i, b_i:\bar{\g}^i\cap B_\mu(x_0)\rightarrow\R$. Let $\bar{x}\in\g^i$ the point given by
\[
\bigl(\,(L+1)\sqrt{s_0}\,,\, h\bigl(\,(L+1)\sqrt{s_0}\,\bigr)\,\bigr)\,,
\]
and let $r>0$ be such that the ball $B_r(x_0)$ intersects the curve $\g^i$ in the point $\bar{x}$.
Up to take a smaller $\mu$, we can suppose $|b_i(x)-b_i(\bar{x})|<\frac{1}{4}$ and $|a_i(x)-a_i(\bar{x})|<\frac{1}{4}$, for $x\in\g^i\cap B_\mu(x_0)$.
Up to decreasing the value of $\bar{\delta_1}$, we can also suppose $|b_i(\bar{x})|\geq \frac{3}{4}$, $|a_i(\bar{x})|\leq \frac{1}{4}$.
By reasoning as in the previous step, let us consider the unit vector field 
$$
Y^i(x):=\frac{\widetilde{Y}^i}{|\widetilde{Y}^i|}\,,
$$
where we define the vector field $\widetilde{Y}$ as
$$
\widetilde{Y}^i:=\widetilde{\chi}(x)\bigl( a_i(x)\tau^i(x) + b_i(x)\nu^i(x) \bigr) + ( 1- \widetilde{\chi}(x)) \nu^i(x)\,.
$$
Using the same computation of the previous step, we have that $|Y^i\cdot\nu^i|\geq C |Y^i\cdot\tau^i|$ on $\g^i\cap B_{3\mu}(x_0)\meno B_r(x_0)$,
for some constant $C>0$. Moreover, it is possible to represent (a piece of) $\Phi(\g^i)$ 
as a graph of a function $\varphi$ of class $C^3$ over the curve $\g^i\cap B_{3\mu}(x_0)\meno B_r(x_0)$, with respect to the vector field $Y^i$.
Notice that the vector field $Y^i$ turns out to be of clas $C^3$.
Finally, for any $\xi>0$ it is possible to find $\bar{\delta}_1>0$ such that if $\|\Phi-\id\|_{C^2(\g;\bar{\o})}<\bar{\delta}_1$, then
$\|\varphi\|_{C^3(\bar{\g}^i\cap B_{3\mu}(x_0)\meno B_r(x_0))}<\xi$. Define
$$
N:=
\left\{
\begin{array}{ll}
S^i & \text{ on } \bar{\g}^i\cap B_r(x_0)\\
&\\
\varphi Y^i & \text{ on } \bar{\g}^i\cap B_{3\mu}(x_0)\meno B_r(x_0)\\
\end{array}
\right. 
$$
Notice that $N$ turns out to be a well defined $C^3$ vector field in $\g\cap B_{3\mu}(x_0)$.\\


\emph{Step 2: construction of the functions far from $x_0$}. Let $R\in C^3(\bar{\o};\R^2)$ be a vector field with the following properties
\begin{itemize}
\item $|R|\leq1$,
\item $R\bigl(x+t\nu^i(x)\bigr)=\nu_i(x)$ for any $|t|<\mu$ and any $x\in(\g^i)_\mu\meno \bigl(B_\mu(x_0)\cup(\po)_\mu\bigr)$,
\item $|R\cdot\nu^i|\geq\frac{1}{2}$ on $\g^i$,
\item $R$ is tangential to $\po$,
\item $R\equiv0$ on $\pdo\cup (\o\meno U)$.
\end{itemize}
Then, it is possible to find a function $\psi\in C^3((\g^i)_\mu\meno (B_\mu(x_0)\cup(\po)_\mu)$ (extended in a constant way along the trajectories of $R$) such that, if we consider the flow $\Phi^B_t$ of the vector field $\psi R$, we have $\Phi^B_t((\g^i)_\mu\meno (B_\mu(x_0)\cup(\po)_\mu)\in \Phi(\g^i)$.
Moreover, for any $\xi>0$ there exists $\bar{\delta}_1>0$ such that if $\|\Phi-\id\|_{C^2(\bar{\o};\bar{\o})}<\bar{\delta}_1$, then $\|\psi\|_{C^2}<\xi$.\\


\emph{Step 3: definition of the functions in $\bar{\g}$}. We define our family of functions $(\Phi_t)_{t\in[0,1]}$ as follows:
$$
\Phi_t(x):=\chi\Biggl( \frac{|x-x_0|^2}{(3\mu)^2} \Biggr)\Phi_t^O(x) + \Biggl(1-\chi\Biggl( \frac{|x-x_0|^2}{(3\mu)^2} \Biggr)\Biggr)\Phi^B_t(x)\,.
$$
Notice that the flows $\Phi^O_t$ and $\Phi^B_t$ are the same for points $x\in \g\meno(B_{2\mu}(x_0)\cup(\po)_\mu)$.
Moreover\footnote{Recall that $x_0\not\in\g$.} the above functions are of class $C^3$ in $\g$ and $\Phi_1(\bar{\g})=\g_\Phi$.
Notice also that, for any $x\in\bar{\g}$, the function $t\mapsto\Phi_t(x)$ is of class $C^3$. \\

We claim that it is possible to find $\bar{\delta}_1>0$ and $L>1$ (where $L$ is the constant used in the construction of the functions $G_L$ in the second case of the first step) such that if $\|\Phi-\id\|_{C^2(\g;\bar{\o})}<\bar{\delta}_1$, then (up to take a smaller $\mu$)
\begin{equation}\label{eq:stnorma}
\|\Phi_t-\id\|_{C^2(\g;\bar{\o})}\leq \varepsilon\,.
\end{equation}
Indeed, we first choose $L>1$ such that $\frac{C}{L}<\frac{\varepsilon}{4}$ (where $C$ is the constant appearing in \eqref{eq:vic}), and 
then we choose $\bar{\delta}_1$ in such a way that the desired estimate holds true.\\


\emph{Step 4: extension of the functions to the whole $\bar{\o}$}. First of all we extend our functions $\Phi_t$ on $\po$ in the following way: let $x\in\po$ and consider the image of the point $x$ at time $t$ through the flow given by the vector field $\psi R$ (where $\psi$ is the function found in Step 2). Since $R$ is tangential to $\po$, we know that
if we start from a point $x\in\po$, its evolution with respect to the above flow will belong to $\po$.
Moreover notice that this functions turns out to be the identity on $\pdo$.
In order to extend each $\Phi_t$ to the whole $\bar{\o}$ we will make use of Lemma \ref{lem:ext} on each of the three connected components of $\o\meno\bar{\g}$, where the function $g$ of the lemma can be taken as the identity map in
$\Omega\meno\bigl((\g)_\delta\cup(\po)_\delta\bigr)$. We claim that $\Phi_t$ is a diffeomorphism in each connected components of $\o\meno\bar{\g}$. This can be easily seen by noticing that the extension provided by Lemma \ref{lem:ext} is close to the identity if the original functions on the curves are. Moreover it is also easy to see that we have 
$C^3$ regularity for the map $t\mapsto\Phi_t(x)$, for any $x\in\bar{\o}$. Hence, $(\Phi_t)_t$ turns out to be an admissible family.\\


\emph{Step 5: estimates}. First of all we prove estimate \eqref{eq:stima1}. By definition we have
$$
Z(x)= \Biggl(1-\chi\Biggl( \frac{|x-x_0|^2}{(3\mu)^2} \Biggr)\Biggr) Z^B(x)\,,
$$
where $Z^B(x):=\psi^2 DR[R]$ (recall that $\psi$ is the function given by Step 2). Since $|R\cdot\nu|\geq \frac{1}{2}$ in the region where we consider the flow of the vector field $\psi R$, we can take $\bar{\delta}_1$ so small such that $|R(\Phi^B_t(x))\cdot \nu_t(\Phi^B_t(x))|\geq\frac{1}{4}$ for
$x\in\g\meno B_\mu(x_0)$. Thus
$$
\int_{\g_t} |Z\cdot\nu_t|\dh = \int_{\g_t} \psi^2 DR[R,\nu_t]\dh \leq C\int_{\g_t} \psi^2|R\cdot\nu_t|^2\dh = C \int_{\g_t} |X\cdot\nu_t|^2\dh\,.
$$

To prove estimate \eqref{eq:stima2} we first need to notice the following fact: let $\alpha_1, \alpha_2>0$ be small parameters, and take
$a, b\in\R$ small such that $b\neq0$ and $|b|\geq C|a|$ for some constant $C>0$. Consider the two parabolas given by
$$
y= -\alpha_2(x-a)^2+b\,,\quad\quad, \text{ and } y=\alpha_1x^2\,.
$$
Then the distance between these two parabola are greater than $\frac{1}{2}b$ if $x\in[0,C\sqrt{b}]$ (or $x\in[a,C\sqrt{b}]$ if $a<0$), for some constant
$C>0$ depending on $\alpha_1$ and $\alpha_2$.

We use the above observation in this way: suppose $x_0\not\in C^i$ and represent the curves $\g^i$ and $\Phi(\g^i)$ in a neighborhood of $x_0$
as the graphs, with respect to the axes given by $\tau^i(x_0)$ and $\nu^i(x_0)$ centered at $x_0$, of $h_i$ and $\widetilde{h}_i$ respectively.
Up to change $\nu^i(x_0)$ with $-\nu^i(x_0)$ we can suppose $\widetilde{h}_i\geq0$. Thus, it is possible to find $\alpha_1, \alpha_2>0$ such that
$$
h_i(s)\leq \alpha_1 s^2\,,\quad\quad\quad \widetilde{h}_i(s)\geq -\alpha_2(s-a)^2+b\,,
$$
where we write $\Phi(x_0)-x_0=a\tau^i(x_0)+b\nu^i(x_0)$, for some $a, b\in\R$ with $b\neq0$ and $|b|\geq C|a|$.
Set $d:=|\Phi(x_0)-x_0|$. Thanks to the above observation we can say that
$$
|Y^i(x)|\geq\frac{1}{2}d\,,
$$
for $x\in\g^i\cap  B_{D\sqrt{d}}(x_0)$, for some constant $D>0$ depending on $\g^i$ and $\bar{\delta}_1$.

We are now in a position to prove estimate \eqref{eq:stima2}. Suppose $x_0\not\in \cup_{i=1}^3 C^i$. Thanks to the definition of the vector field
$N$ and the properties of $R$, we know that on $\g$ it holds
\begin{equation}\label{eq:1}
|X\cdot\nu|\geq C|X\cdot\tau|\,,
\end{equation}
for some constant $C>0$. Thus, a similar inequality holds on $\g_t$ provided $\bar{\delta}_1$ is sufficiently small. Hence the integral estimate follows directly.

If instead $x_0\in C^i$ we have, for $j\neq i$, the following estimate in force
\begin{equation}\label{eq:2}
\int_{\g^i\cap B_{r\sqrt{d}}(x_0)} |X\cdot\tau^i|^2\dh \leq Cd^{\frac{3}{2}}\leq C\int_{\g^j\cap B_{D\sqrt{d}}(x_0)} |X\cdot\nu_j|^2\dh\,.
\end{equation}
For $\bar{\delta}_1$ sufficiently small, the same estimate continues to hold also for the curves $\g^i_t$ and $\g^j_t$ (with $\tau^i_t$ and $\nu^i_t$).
Notice that in $\g^i\cap B_{3\mu}(x_0)\meno B_r(x_0)$ we have estimate \eqref{eq:1} in force.
By using \eqref{eq:1} and \eqref{eq:2} we obtain estimate \eqref{eq:stima2}.
\end{proof}

\begin{remark}\label{rem:cont}
From the above proof, it is easy to see that the following property holds: if $(\Phi_\varepsilon)_\varepsilon$ is a family of diffeomorphisms of class $C^2$ with the same properties as in the statement of the theorem, such that $\Phi_\varepsilon\rightarrow\Phi$ in the
$C^1$ topology, where $\Phi$ is a diffeomorphism satisfying $\Phi(\g)\neq\g$, then there exists a constant $C>0$ such that
$$
\|X^\varepsilon\cdot \nu_t^\varepsilon\|_{L^2(\g^\varepsilon_t)}\geq C\,,
$$
where $\g^\varepsilon_t:=\Phi_\varepsilon^t(\g)$, denoting by $\Phi_\varepsilon^t$ the flow generated by $X^\varepsilon$.
\end{remark}


\subsection{Uniform coercivity of the quadratic form}

The second technical result we prove is a sort of continuity of the quadratic form $\partial^2\ms\bigl((u,\g);U\bigr)$ in a stable critical triple point $(u,\g)$. This result is the fundamental estimate needed in order to prove Theorem ~\ref{thm:minC2}.

\begin{proposition}\label{prop:cont}
Let $(u,\g)$ be a strictly stable critical triple point. Then there exist $\bar{\delta}_2>0$ and $\bar{C}>0$ such that
$$
\partial^2\ms\bigl((u_{\Phi},\g_{\Phi}); U\bigr)[\p] \geq \bar{C}\|\p\|^2_{\widetilde{H}^1(\g_{\Phi})} \,,
$$
for each $\Phi\in\diffc$, where $\delta\in(0,\bar{\delta}_2)$, and each $\p\in\widetilde{H}^1(\g_{\Phi})$.
\end{proposition}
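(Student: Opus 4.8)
The plan is to argue by contradiction, through a single compactness argument that simultaneously upgrades the strict positivity of $\partial^2\ms\bigl((u,\g);U\bigr)$ to coercivity and transfers it to nearby configurations. Suppose the statement fails; then there are diffeomorphisms $\Phi_n\in\mathcal{D}_{\delta_n}(\o,U)$ with $\delta_n\to0$ (so $\Phi_n\to\id$ in $W^{2,\infty}(\g;\bar{\o})$) and functions $\p_n\in\widetilde{H}^1(\g_{\Phi_n})$, normalized by $\|\p_n\|_{\widetilde{H}^1(\g_{\Phi_n})}=1$, with $\partial^2\ms\bigl((u_{\Phi_n},\g_{\Phi_n});U\bigr)[\p_n]\to0$. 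I would pull these back to the fixed junction by setting $\tilde{\p}_n:=\p_n\circ\Phi_n$; since $\Phi_n(x_0)$ is the triple point of $\g_{\Phi_n}$, the constraint $(\p_{n,1}+\p_{n,2}+\p_{n,3})(\Phi_n(x_0))=0$ becomes $(\tilde{\p}_{n,1}+\tilde{\p}_{n,2}+\tilde{\p}_{n,3})(x_0)=0$, so $\tilde{\p}_n\in\widetilde{H}^1(\g)$. Change of variables on each arc, together with $\Phi_n\to\id$ in $W^{2,\infty}$, gives $\|\tilde{\p}_n\|_{\widetilde{H}^1(\g)}=1+o(1)$; in particular $\tilde{\p}_n$ is bounded in $\widetilde{H}^1(\g)$ and bounded away from $0$, so up to a subsequence $\tilde{\p}_n\rightharpoonup\tilde{\p}$ weakly in $\widetilde{H}^1(\g)$ and strongly in $L^2(\g)$, in $C^0(\g)$, and in $H^{1/2}(\g)$ by the compact Sobolev embeddings on the one-dimensional arcs.

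Next I would show that the two quadratic forms agree in the limit, i.e. $\partial^2\ms\bigl((u_{\Phi_n},\g_{\Phi_n});U\bigr)[\p_n]=\partial^2\ms\bigl((u,\g);U\bigr)[\tilde{\p}_n]+o(1)$. Comparing term by term in Definition~\ref{def:h1}: the tangential-gradient term $\int_{\g_{\Phi_n}}|\nabla_{\g_{\Phi_n}}\p_n|^2\dh$ and the term $\int_{\g_{\Phi_n}}H_{\Phi_n}^2\p_n^2\dh$ transform under the pullback with Jacobian and tangential-metric factors differing from the identity by $O(\delta_n)$ — here the $W^{2,\infty}$-closeness is precisely what controls the curvature $H_{\Phi_n}$ — so each difference is bounded by $C\delta_n\|\tilde{\p}_n\|^2_{\widetilde{H}^1(\g)}\to0$. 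The endpoint contributions $\sum_{i}(\p_{n,i}^2\,D\nu_{\po}[\nu,\nu])$ evaluated at the endpoints of $\g_{\Phi_n}$ converge because those endpoints and the normals $\nu_{\Phi_n}$ converge while $D\nu_{\po}$ is fixed and smooth, and point evaluation is continuous on $C^0(\g)$, where $\tilde{\p}_n\to\tilde{\p}$.

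The delicate term, and the main obstacle, is the nonlocal one $-2\int_U|\nabla v_{\p_n}|^2\dx$. I would pull $v_{\p_n}$ back by $\Phi_n$ to a function $\hat{v}_n$ on $\o\meno\g$ solving an elliptic problem whose coefficients converge to the identity and whose right-hand side is $z\mapsto\langle\div_\g(\tilde{\p}_n\ng\tilde{u}_{\Phi_n}^+),z^+\rangle-\langle\div_\g(\tilde{\p}_n\ng\tilde{u}_{\Phi_n}^-),z^-\rangle$, where $\tilde{u}_{\Phi_n}:=u_{\Phi_n}\circ\Phi_n\to u$ by the continuous dependence of the minimizer on $\Phi$ (cf. Section~\ref{sec:dotu}). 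Using the strong convergence $\tilde{\p}_n\to\tilde{\p}$ in $H^{1/2}(\g)$, the product estimate of Lemma~\ref{lem:fss}, the continuity of $\div_\g\colon H^{1/2}(\g)\to H^{-1/2}(\g)$, and the regularity $\ng u^\pm\in H^{1/2}(\g)\cap C^{0,\alpha}(\g)$ from Theorem~\ref{thm:regu}, these right-hand sides converge strongly in $\bigl(H^1_U(\o\meno\g)\bigr)^*$; continuous dependence on data and domain then yields $\hat{v}_n\to v_{\tilde{\p}}$ strongly in $H^1_U(\o\meno\g)$, whence $\int_U|\nabla v_{\p_n}|^2\dx\to\int_U|\nabla v_{\tilde{\p}}|^2\dx$. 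The passage from weak to strong convergence, combined with the moving-domain pullback and the continuity of $u_\Phi$, is the technical heart of the argument.

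Finally I would conclude. Writing $Q[\psi]:=\partial^2\ms\bigl((u,\g);U\bigr)[\psi]$, the decomposition $Q[\psi]=\|\psi\|^2_{\widetilde{H}^1(\g)}+K[\psi]$ holds with $K[\psi]:=-\int_\g\psi^2\dh+\int_\g H^2\psi^2\dh-\sum_{i=1}^3\bigl(\psi_i^2\,D\nu_{\po}[\nu,\nu]\bigr)(x_i)-2\int_U|\nabla v_\psi|^2\dx$, and every term of $K$ is weakly continuous on $\widetilde{H}^1(\g)$ by the compact embeddings $H^1\hookrightarrow L^2,\,C^0$ and by the compactness of $\psi\mapsto v_\psi$ established above; hence $K[\tilde{\p}_n]\to K[\tilde{\p}]$. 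By the previous two steps $Q[\tilde{\p}_n]=\partial^2\ms\bigl((u_{\Phi_n},\g_{\Phi_n});U\bigr)[\p_n]+o(1)\to0$, and weak lower semicontinuity gives $Q[\tilde{\p}]=\|\tilde{\p}\|^2_{\widetilde{H}^1(\g)}+K[\tilde{\p}]\leq\liminf_n Q[\tilde{\p}_n]\leq0$. Strict stability then forces $\tilde{\p}=0$, so $K[\tilde{\p}_n]\to K[0]=0$ and therefore $Q[\tilde{\p}_n]=\|\tilde{\p}_n\|^2_{\widetilde{H}^1(\g)}+o(1)=1+o(1)$, contradicting $Q[\tilde{\p}_n]\to0$. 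This contradiction yields the claimed uniform coercivity, with $\bar{C}$ and $\bar{\delta}_2$ extracted from the argument.
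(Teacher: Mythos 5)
Your argument is correct and follows essentially the same route as the paper: contradiction, pullback of $\p_n$ along $\Phi_n$ to the reference junction, term-by-term comparison of the two quadratic forms with the nonlocal term handled via the pulled-back elliptic problem with coefficients and $H^{-1/2}$ data converging, and the compactness/strict-stability dichotomy at the end. The only difference is organizational: the paper first proves coercivity of $\partial^2\ms\bigl((u,\g);U\bigr)$ at the critical configuration as a separate statement (Lemma~\ref{lem:coerc}) and then reduces the proposition to showing $\bigl|\partial^2\ms\bigl((u_{\Phi_n},\g_{\Phi_n});U\bigr)[\p_n]-\partial^2\ms\bigl((u,\g);U\bigr)[\widetilde{\p}_n]\bigr|\to0$, whereas you fold both steps into a single compactness argument.
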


In order to prove the above proposition, we first need to prove that if $(u,\g)$ is strictly stable, then $\partial^2\ms\big( (u,\g);U \big)$ is coercive.

\begin{lemma}\label{lem:coerc}
Let $(u,\g)$ be a strictly stable critical triple point. Then there exists $M>0$ such that
$$
\partial^2\ms\bigl((u,\g);U\bigr)[\p] \geq M\|\p\|^2_{\widetilde{H}^1(\g)} \,,\quad\quad\quad\forall\p\in\widetilde{H}^1(\g)\,.
$$
\end{lemma}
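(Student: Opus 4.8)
The plan is to argue by contradiction, exploiting the compactness of the relevant embeddings together with the strict stability hypothesis. Suppose the asserted coercivity fails for every $M>0$. Then I would produce a sequence $(\p_n)_n\subset\widetilde{H}^1(\g)$ with $\|\p_n\|_{\widetilde{H}^1(\g)}=1$ and
\[
\partial^2\ms\bigl((u,\g);U\bigr)[\p_n]\leq\frac{1}{n}\,,
\]
so that $\limsup_n\partial^2\ms\bigl((u,\g);U\bigr)[\p_n]\leq0$. Since the unit ball of $\widetilde{H}^1(\g)$ is weakly sequentially compact, after extracting a subsequence (not relabelled) I may assume $\p_n\rightharpoonup\p$ weakly in $\widetilde{H}^1(\g)$ for some $\p\in\widetilde{H}^1(\g)$. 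The one-dimensionality of $\g$ yields the compact embeddings $\widetilde{H}^1(\g)\hookrightarrow\hookrightarrow L^2(\g)$, $\widetilde{H}^1(\g)\hookrightarrow\hookrightarrow C^0(\bar\g)$ and $\widetilde{H}^1(\g)\hookrightarrow\hookrightarrow H^{1/2}(\g)$, so along the subsequence $\p_n\to\p$ strongly in each of these spaces; in particular the endpoint values $\p_{n,i}(x_i)$ converge to $\p_i(x_i)$.

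Next I would pass to the limit term by term in the explicit expression of $\partial^2\ms$. The curvature term $\int_{\g}H^2\p_n^2\dh$ and the boundary term $\sum_{i=1}^3(\p_{n,i}^2 D\nu_{\po}[\nu,\nu])(x_i)$ converge to the corresponding quantities for $\p$, by strong $L^2$- and $C^0$-convergence respectively. The Dirichlet-type term $\int_{\g}|\ng\p_n|^2\dh$ is weakly lower semicontinuous, hence $\liminf_n\int_{\g}|\ng\p_n|^2\dh\geq\int_{\g}|\ng\p|^2\dh$, which is the favourable direction for a lower bound. The delicate term is the nonlocal one $-2\int_U|\nabla v_{\p_n}|^2\dx$: here I would prove that $v_{\p_n}\to v_{\p}$ strongly in $H^1_U(\o\meno\g)$. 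By Lemma \ref{lem:fss} the map $\p\mapsto\p\,\ng u^\pm$ is bounded from $\widetilde{H}^1(\g)$ into $H^{1/2}(\g)$, and the strong convergence $\p_n\to\p$ in $H^{1/2}(\g)$ forces $\p_n\,\ng u^\pm\to\p\,\ng u^\pm$ in $H^{1/2}(\g)$; consequently the right-hand sides of \eqref{eq:vphi} converge strongly in $\bigl(H^1_U(\o\meno\g)\bigr)'$, and since $\p\mapsto v_{\p}$ is linear and continuous, $v_{\p_n}\to v_{\p}$ strongly in $H^1_U(\o\meno\g)$, whence $\int_U|\nabla v_{\p_n}|^2\dx\to\int_U|\nabla v_{\p}|^2\dx$. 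Combining these facts gives
\[
\partial^2\ms\bigl((u,\g);U\bigr)[\p]\leq\liminf_n\partial^2\ms\bigl((u,\g);U\bigr)[\p_n]\leq0\,,
\]
and strict stability then forces $\p=0$.

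Finally I would extract the contradiction from $\p=0$. In this case $\p_n\to0$ strongly in $L^2(\g)$ and in $C^0(\bar\g)$, and $v_{\p_n}\to v_0=0$ in $H^1_U(\o\meno\g)$, so every term of $\partial^2\ms\bigl((u,\g);U\bigr)[\p_n]$ except $\int_{\g}|\ng\p_n|^2\dh$ tends to $0$. The bound $\limsup_n\partial^2\ms\bigl((u,\g);U\bigr)[\p_n]\leq0$ then gives $\int_{\g}|\ng\p_n|^2\dh\to0$, and together with $\p_n\to0$ in $L^2(\g)$ this yields $\|\p_n\|_{\widetilde{H}^1(\g)}\to0$, contradicting the normalization $\|\p_n\|_{\widetilde{H}^1(\g)}=1$. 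The main obstacle is precisely the treatment of the nonlocal term $-2\int_U|\nabla v_{\p_n}|^2\dx$: because it enters with the unfavourable sign, lower semicontinuity is not enough and one genuinely needs the strong $H^1$-convergence of $v_{\p_n}$, which is where the compact embedding $\widetilde{H}^1(\g)\hookrightarrow\hookrightarrow H^{1/2}(\g)$ and Lemma \ref{lem:fss} play the decisive role.
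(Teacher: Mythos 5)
Your proposal is correct and follows essentially the same route as the paper: a contradiction argument with a normalized minimizing sequence, weak compactness in $\widetilde{H}^1(\g)$ combined with the compact embeddings into $C^0$ and $H^{\frac{1}{2}}(\g)$, lower semicontinuity of the gradient term, strong convergence of the nonlocal term via Lemma~\ref{lem:fss} and the $H^{\frac{1}{2}}$-convergence of $\p_n\ng u^{\pm}$, and the final dichotomy $\p\neq0$ versus $\p=0$. The only cosmetic difference is that the paper carries out the product estimate for $\p_n\ng u^{\pm}-\p\ng u^{\pm}$ explicitly on the Gagliardo seminorm, which is exactly the boundedness-plus-strong-convergence step you invoke.
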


\begin{proof}
It is sufficient to show that
$$
M:=\inf\{ \partial^2\ms\bigl((u,\g);U\bigr)[\p] \,:\, \|\p\|_{\widetilde{H}^1(\g)}=1 \}>0\,.
$$
Suppose for the sake of contradiction that $M=0$, and let $(\p_n)_n$ be a minimizing sequence for $M$, \emph{i.e.}, $\|\p_n\|_{\widetilde{H}^1(\g)}=1$ and
$\partial^2\ms\bigl((u,\g);U\bigr)[\p_n]\rightarrow0$. Then there exists $\p\in\widetilde{H}^1(\g)$ such that, up to a not relabelled subsequence, $\p^i_n\rightharpoonup\p^i$ in $\widetilde{H}^1(\g)$ and, by the Sobolev embeddings, $\p_n\rightarrow\p$ in $C^{0,\beta}(\bar{\g})$ for each $\beta\in(0,\frac{1}{2})$, and $\p_n\rightarrow\p$ in $H^{\frac{1}{2}}(\g)$. We claim that
\begin{equation}\label{eq:conv}
\partial^2\ms\bigl((u,\g);U\bigr)[\p] \leq\liminf_{n\rightarrow\infty}\partial^2\ms\bigl((u,\g);U\bigr)[\p_n]= 0\,.
\end{equation}
Indeed, it is easy to see that
$$
\int_{\g}|\ng \p |^2\dh \leq \liminf_{n\rightarrow\infty} \int_{\g}|\ng \p_n |^2\dh\,,
$$
$$
\int_{\g}H^2\p_n^2\dh \rightarrow \int_{\g}H^2\p^2\dh\,,
$$
and
$$
\sum_{i=1}^{3}\bigl(\p_i^2 D\nu_{\po}[\nu,\nu]\bigr)(x_i) \rightarrow \sum_{i=1}^{3}\bigl(\p_i^2 D\nu_{\po}[\nu,\nu]\bigr)(x_i) \,.
$$
Thus, we are left to prove that
$$
\int_{\g} z^{\pm}\div_{\g}(\p_n\ng u^{\pm}) \dh \rightarrow \int_{\g} z^{\pm}\div_{\g}(\p\ng u^{\pm}) \dh\,,
$$
for all $z\in H^1_U(\o\meno\g)$. Notice that $\p_n\ng u^{\pm}\in H^{\frac{1}{2}}(\g;\R^2)$ thanks to Lemma ~\ref{lem:fss}. To prove the above convergence we will show that $\p_n\ng u^{\pm}\rightarrow\p\ng u^{\pm}$ in $H^{\frac{1}{2}}(\g;\R^2)$:
\begin{align*}
\int_{\g}\int_{\g}&\frac{|(\p_n\ng u^{\pm}-\p\ng u^{\pm})(x) - (\p_n\ng u^{\pm}-\p\ng u^{\pm})(y)|}{|x-y|^2}\,\dh(x)\,\dh(y) \\
&\leq \|\ng u^{\pm}\|^2_{L^{\infty}(\bar{\g};\R^2)}\|\p_n-\p\|^2_{H^{\frac{1}{2}}(\g)} + \|\p_n-\p\|^2_{L^{\infty}(\bar{\g})}\|\ng u^{\pm}\|^2_{H^{\frac{1}{2}}(\g;\R^2)}\,.
\end{align*}
Now we have two cases: if $\p\neq0$ then \eqref{eq:conv} gives the desired contradiction. On the other hand, if $\p=0$, then $v_{\p}=0$, and
hence again by \eqref{eq:conv} we obtain that
$$
 \int_{\g}|\ng \p_n |^2\dh\rightarrow0\,,
$$
and this contradicts the fact that $\|\p_n\|_{\widetilde{H}^1(\g)}=1$.
\end{proof}

Before proving Proposition ~\ref{prop:cont} we need to observe the following fact, similar to \cite[Lemma 5.1]{BonMor}.

\begin{remark}\label{rem:conv}
Consider the function $u_{\Phi}$ (see Definition ~\ref{def:umod}). We claim that, for every $\alpha<\frac{1}{2}$, the following convergence holds true:
$$
\sup_{\Phi\in\diffc}\|\nabla_\g (u_\Phi^\pm\circ\Phi ) - \nabla_\g u^\pm \|_{C^{0,\alpha}(\bar{\g};\R^2)}\rightarrow0\,,
$$
as $\delta\rightarrow0^+$. First of all we notice that, what we are really claiming is that, denoting by $\o_1, \o_2, \o_3$ the three connected components of $\o\meno\bar{\g}$, and letting $u^i$ be the function $u$ restricted to $\o_i$, we have that
$$
\sup_{\Phi\in\diffc}\|\nabla_\g (\widetilde{u}^i_\Phi\circ\Phi ) - \nabla_\g \widetilde{u}^i \|_{C^{0,\alpha}(\bar{\g}\cap\partial \o_i;\R^2)}\rightarrow0\,,
$$
as $\delta\rightarrow0^+$, where $\widetilde{u}^i$ is the trace of $u^i$ on $\bar{\g}\cap\partial \o_i$.
This can be proved by using the estimate of the $H^2$-norm of $\widetilde{u}^i_\Phi\circ\Phi$ in a neighborhood of $\g$ (that turns out to be uniform for $\Phi\in\diffc$) and by the Ascoli-Arzel\`{a} theorem.
\end{remark}

\begin{proof}[Proof of Proposition ~\ref{prop:cont}]
Suppose for the sake of contradiction that there exist a family of diffeomorphisms $\Phi_n:\bar{\o}\rightarrow\bar{\o}$ with $\Phi_n=\id$ in $(\o\meno U)\cup\pdo$ such that $\Phi_n\rightarrow \id$ in $C^2(\bar{\o};\bar{\o})$, and functions $\p_n\in\widetilde{H}^1(\g_{\Phi_n})$ with $\|\p_n\|_{\widetilde{H}^1(\g_{\Phi_n})}=1$, such that
\begin{equation}\label{eq:abs}
\partial^2\ms\bigl( (u_{\Phi_n},\g_{\Phi_n}); U  \bigr)[\p_n]\rightarrow0\,.
\end{equation}
Let $\widetilde{\p}_n:=c_n\p_n\circ\Phi_n$, where $c_n:=\|  \p_n\circ\Phi_n \|^{-1}_{\widetilde{H}^1(\g)}\rightarrow1$. Then it is not difficult to prove that
$$
\biggl| \int_{\g_{\Phi_n}} H^2_{\Phi_n}\p^2_n \dh - \int_{\g} H^2\widetilde{\p}_n^2 \dh \biggr|\rightarrow0\,,
$$
$$
\biggl| \int_{\g_{\Phi_n}} |\nabla_{\g_{\Phi_n}}\p_n|^2 \dh - \int_{\g} |\ng\widetilde{\p}_n|^2 \dh \biggr| \rightarrow0\,,
$$
and
$$
\Bigl|\sum_{i=1}^{3}\bigl((\p_n)_i^2 D\nu_{\po}[\nu,\nu]\bigr)(\Phi_n(x_i)) - \sum_{i=1}^{3}\bigl((\widetilde{\p}_n)_i^2 D\nu_{\po}[\nu,\nu]\bigr)(x_i)\Bigr| \rightarrow 0 \,.
$$
We also claim that the following convergence holds
\begin{equation}\label{eq:convmeas}
\int_U |\nabla v_{\widetilde{\p}_n}-\nabla(v_{\p_n}\circ\Phi_n)|^2 \dx \rightarrow0\,.
\end{equation}
To prove it we proceed as in the proof of \cite[Lemma 5.4]{CagMorMor} that, for the reader's convenience, we reproduce here. Our argument only changes from the original one in the proof of the last convergence, where we take advantage of the fact that in dimension $2$ functions in $H^1(\g)$ are bounded in $L^{\infty}$. Otherwise we would have needed that $\ng(u^{\pm}\circ\Phi_n)\rightarrow\ng u^{\pm}$ in $C^{0,\alpha}(\bar{\g};\R^2)$ for some $\alpha>\frac{1}{2}$, while in our case, due to the singularity given by the triple point, we only have the above convergence for $\alpha<\frac{1}{2}$.
So, setting $z_n:=v_{\widetilde{\p}_n}-v_{\p_n}\circ\Phi_n$, we obtain that $z_n$ solves the problem
$$
\int_U A_n[\nabla z_n,\nabla z]\dx - \int_U (A_n-\id)[\nabla \widetilde{\p}_n,\nabla z]\dx + \int_{\g} (h_n^+z^+-h_n^-z^-)\dh=0\,,
$$
for all $z\in H^1_U(\o\meno\g)$, where $h_n^{\pm}:=\div_{\g}(\widetilde{\p}_n\ng u^{\pm})-\bigl( \div_{\g_{\Phi_n}}(\p_n\nabla_{\g_{\Phi_n}}u^{\pm}_{\Phi_n}) \bigr)J_{\Phi_n}$ and $A_n:=(J_{\Phi_n}D^{-1}\Phi_n D^{-T}\Phi_n)\circ\Phi_n$. Since $A_n\rightarrow\id$ in $C^1$ and the sequence $(v_{\widetilde{\p}_n})_n$ is bounded in $H^1(\o\meno\g)$, we have that $(A_n-\id)[\nabla \widetilde{\p}_n]\rightarrow0$ in $H^1(\o\meno\g)$. Thus 
\eqref{eq:convmeas} follows by showing that $h_n^{\pm}\rightarrow0$ in $H^{-\frac{1}{2}}(\g)$.
First of all we want to write the last term of $h_n$ in a divergence form. For let $\xi\in C^{\infty}_{c}(\g)$ and write
\begin{align*}
\int_{\g} & \bigl( \div_{\g_{\Phi_n}}(\p_n\nabla_{\g_{\Phi_n}}u^{\pm}_{\Phi_n}) \bigr)J_{\Phi_n}\xi\dh =
          \int_{\g_{\Phi_n}} \div_{\g_{\Phi_n}}(\p_n\nabla_{\g_{\Phi_n}}u^{\pm}_{\Phi_n})(\xi\circ\Phi_n^{-1})\dh \\
&= -\int_{\g_{\Phi_n}} \p_n\nabla_{\g_{\Phi_n}}u^{\pm}_{\Phi_n} \nabla_{\g_{\Phi_n}}(\xi\circ\Phi_n^{-1})\dh \\
&= -\int_{\g_{\Phi_n}} \p_n(D_{\g}\Phi_n)^{-T}\circ\Phi_n^{-1}[\ng(u_{\Phi_n}^{\pm}\circ\Phi_n)\circ\Phi_n^{-1}]\cdot(D_{\g_{\Phi_n}}\Phi_n)^{-T}
          [(\ng\xi)\circ\Phi_n^{-1}]\dh \\
&= -\int_{\g}c_n^{-1}\widetilde{\p}_n(D_{\g}\Phi_n)^{-1}(D_{\g}\Phi_n)^{-T}[\ng(u_{\Phi_n}^{\pm}\circ\Phi_n),\ng\xi]J_{\Phi_n}\xi\dh\\  
&= \int_{\g}c_n^{-1}\div_{\g}\bigl(\widetilde{\p}_n(D_{\g}\Phi_n)^{-1}(D_{\g}\Phi_n)^{-T})[\ng(u_{\Phi_n}^{\pm}\circ\Phi_n)]J_{\Phi_n}\bigr)\xi\dh\\        
\end{align*}
Thus we have that
$$
h_n^{\pm} = \div_{\g}\bigl( \widetilde{\p}_n\ng u^{\pm} - c_n^{-1}\widetilde{\p}_n(D_{\g}\Phi_n)^{-1}(D_{\g}\Phi_n)^{-T}[\ng(u_{\Phi_n}^{\pm}\circ\Phi_n)]J_{\Phi_n} \bigr)=:\div_{\g}\Psi_n^{\pm}\,,
$$
and hence, in order to prove that $h_n^{\pm}\rightarrow0$ in $H^{-\frac{1}{2}}(\g)$ we will prove that $\Psi_n^{\pm}\rightarrow0$ in $H^{\frac{1}{2}}(\g)$.
In order to estimate the Gagliardo $H^{\frac{1}{2}}$-seminorm, we first simplify our notation by setting $\lambda_n:=c_n^{-1}(D_{\g}\Phi_n)^{-1}(D_{\g}\Phi_n)^{-T}J_{\Phi_n}$ and $u_n:=u_{\Phi_n}^{\pm}\circ\Phi_n$. Then we can proceed as follows:
\begin{align*}
(\widetilde{\p}_n\lambda_n\ng u_n^{\pm})(x) &- (\widetilde{\p}_n\ng u^{\pm})(x) - (\widetilde{\p}_n\lambda_n\ng u_n^{\pm})(y) + (\widetilde{\p}_n\ng   
                 u^{\pm})(y)\\
&= [(\widetilde{\p}_n(\lambda_n-\id)\ng u_n^{\pm})(x) - (\widetilde{\p}_n(\lambda_n-\id)\ng u_n^{\pm})(x) ] \\
&\hspace{2cm}+ [ (\widetilde{\p}_n(\ng u_n^{\pm} - \ng u^{\pm}))(x) - (\widetilde{\p}_n(\ng u_n^{\pm} - \ng u^{\pm}))(y) ]\,.
\end{align*}
The first term can be rewritten as follows
\begin{align*}
(\widetilde{\p}_n(\lambda_n&-\id)\ng u_n^{\pm})(x) - (\widetilde{\p}_n(\lambda_n-\id)\ng u_n^{\pm})(x) \\
&= (\widetilde{\p}_n(\lambda_n-\id))(x)[\ng u_n^{\pm})(x) - \ng u_n^{\pm})(y)]\\
&\hspace{0.5cm}+ \widetilde{\p}_n(x)[(\lambda_n-\id)(x) - (\lambda_n-\id)(y)]
+ (\widetilde{\p}_n(x)-\widetilde{\p}_n(y))(\lambda_n-\id)(y)\,,
\end{align*}
while the last one as
\begin{align*}
(\widetilde{\p}_n&(\ng u_n^{\pm} - \ng u^{\pm}))(x) - (\widetilde{\p}_n(\ng u_n^{\pm} - \ng u^{\pm}))(y)\\
&= \widetilde{\p}_n(x)[\ng u_n^{\pm} - \ng u^{\pm})(x)-(\ng u_n^{\pm} - \ng u^{\pm})(y))]\\
&\hspace{0.5cm}+[\widetilde{\p}_n(x)-\widetilde{\p}_n(y)][\ng u_n^{\pm} - \ng u^{\pm})(y)]\,.
\end{align*}
Thus the Gagliardo $H^{\frac{1}{2}}$-semi-norm of $\Phi_n$ can be estimated as follows:
\begin{align}\label{eq:stima}
\int_{\g}\int_{\g}&\frac{|\Psi_n(x)-\Psi_n(y)|^2}{|x-y|^2}\dh(x)\dh(y) \nonumber\\
&\leq \|\widetilde{\p}_n\|^2_{C^0(\g)} \|\lambda_n-\id\|^2_{C^0(\bar{\o};\R^{n^2})} \bigl[\, \|\ng u_n^{\pm}\|^2_{H^{\frac{1}{2}}(\g)} + \|\widetilde{\p}_n\|
              ^2_{H^{\frac{1}{2}}(\g)} \,\bigr] \nonumber\\
&\hspace{1cm}+ \mathcal{H}^1(\g) \|\widetilde{\p}_n\|^2_{C^0(\g)} \|\lambda_n-\id\|^2_{C^0(\bar{\o};\R^{n^2})}  \nonumber\\
&\hspace{1cm}+ \|\widetilde{\p}_n\|^2_{C^0(\g)} \|\ng u_n^{\pm} - \ng u^{\pm}\|^2_{H^{\frac{1}{2}}(\g)}  \nonumber\\
&\hspace{1cm}+ \|\widetilde{\p}_n\|^2_{H^{\frac{1}{2}}(\g)} \|\ng u_n^{\pm} - \ng u^{\pm}\|^2_{C^0(\g)}\,.  
\end{align}
To estimate the terms on the right-hand side we will use the following facts:
\begin{itemize}
\item $(\widetilde{\p}_n)_n$ is bounded in $H^{\frac{1}{2}}(\g)$ and in $C^0(\g)$,
\item $\lambda_n\rightarrow\id$ in $C^1(\o;\R^4)$,
\item $u_n\rightarrow u\,\,\,\,\,\,\text{ in }H^2\bigl((\o\meno\g)\cap V\bigr)$, where $V$ i s a neighborhood of $\g$ in $\o$ such that $V\cap\pdo=\emptyset$.
\end{itemize}
Indeed the first fact follows directly from the Sobolev embeddings, since $(\widetilde{\p}_n)_n$ is bounded in $H^1(\g)$,
the second convergence is easy from the fact that $\Phi_n\rightarrow\id$ in $C^2$, while the last claim is a consequence of the continuity property of elliptic boundary value problems: writing the equation satisfied by $u_n$ on $\o$ we notice that the coefficients of the elliptic operator converge
to those of the laplacian. Thus, by Theorem ~\ref{thm:ellcont} we get that $u_n\rightarrow u$ in $H^1(\o)$ and by the estimate ~\eqref{eq:aprst} that the convergence is actually in $H^2((\o\meno\g)\cap V)$ (notice that we have to restrict ourselves to a neighborhood of $\g$ in order to avoid the singularities of $u$ where the Neumann boundary condition transforms into a Dirichlet one).\\
Thus we conclude from \eqref{eq:stima} that $\Psi_n\rightarrow0$ in $H^{\frac{1}{2}}(\g)$.\\

Combining all the above convergence, one gets that
$$
\bigl| \partial^2\ms\bigl( (u_{\Phi_n},\g_{\Phi_n}); U  \bigr)[\p_n] - \partial^2\ms\bigl( (u,\g); U  \bigr)[\widetilde{\p}_n] \bigr| \rightarrow0\,,
$$
and hence, by \eqref{eq:abs}, that
$$
\partial^2\ms\bigl( (u,\g); U  \bigr)[\widetilde{\p}_n] \rightarrow0\,.
$$
But this is in contradiction with the result of Lemma ~\ref{lem:coerc}.
\end{proof}

\subsection{Proof of Theorem ~\ref{thm:minC2}}

We are now ready to prove Theorem ~\ref{thm:minC2}.

\begin{proof}[Proof of Theorem ~\ref{thm:minC2}.] Let $\Phi$ as in the statement of the theorem.

\emph{Step 1}. Suppose $\Phi$ satisfies the following additional hypothesis:
$\Phi\in C^3(\bar{\o};\bar{\o})$, $\Phi(x_0)\neq x_0$ and  $\Phi\bigl( \bar{\g}\cap B_r(x_0) \bigr)= \bar{\g}\cap B_r(x_0)+v$ for some $r>0$ and
$v\in\R^2$.\\

\emph{First step}. Consider the diffeomorphisms $(\Phi_t)_t$ given by Proposition ~\ref{prop:x}. Define the function $g(t):=\ms\bigl((u_t,\g_t);U\bigr)$. Since $(u,\g)$ is a critical point we have that $g'(0)=0$. Hence we can write
\begin{equation*}
\ms\bigl((u_{\Phi}, \g_{\Phi});U\bigr) - \ms\bigl((u,\g);U\bigr) = g(1) - g(0) = \int_0^1 (1-t)g''(t)\,\mathrm{d}t\,.
\end{equation*}
We claim that there exists $\bar{\delta}_1>0$, and a constant $C>0$ such that
\begin{equation}\label{eq:st}
g''(t)\geq C\|X\cdot\nu_t\|_{\widetilde{H}^1(\g_t)}^2\,,
\end{equation}
whenever $\|\Phi-\id\|_{C^2(\bar{\o};\bar{\o})}<\bar{\delta}<\bar{\delta}_1$.
This allows us to conclude. Indeed the local minimality follows directly from \eqref{eq:st}, while the isolated local minimality can be deduced from the fact that $\ms\bigl((u,\g);U\bigr)=\ms\bigl((v,\g_\Phi);U\bigr)$ implies $g''(t)=0$ for each $t\in[0,1]$. In particular $g''(0)=0$, and this implies that $X\cdot\nu_t\equiv0$ on $\g_t$. Looking at the construction of the vector field $X$ (see Proposition ~\ref{prop:cont}) this implies that $X\equiv0$ on $\g$, that is $\g_\Phi=\g$.
Since now the curve $\g$ is fixed and we already know that $u$ minimizes the Dirichlet integral over $\o\meno\g$, we obtain the isolated local minimality of $(u,\g)$ as wanted.\\

Let us now prove \eqref{eq:st}. First of all we notice that, by criticality of $(u,\g)$, $\g$ intersects $\po$ orthogonally and $\nu^i(x_0), \nu^j(x_0)$ are linear independent for $i\neq j$. Thus it is possible to take $\bar{\delta}$ sufficiently small in order to have the that $\g_t$ intersects $\po$ in a non tangent way and that $\nu^i_t(x_0), \nu^j_t(x_0)$ are still linearly independent for $i\neq j$. By the definition of $u_t$ we have that $\partial_{\nu_t} u_t^{\pm}=0$ on $\g_t$ and $\partial_{\nu_{\po}}u^{\pm}=0$ on $(\partial\o\meno\pdo)\cap\bar{U}$. Then
$$
\nabla_{\g}u^{\pm}(x_i^t)=0\quad\quad\text{ for }i=0,1,2,3\,.
$$
In particular $f_t-H_t=0$ on $\partial\g_t$.
Thus, by Remark ~\ref{rem:secvar}, we can write
\begin{align}\label{eq:g}
g''(t) & = \frac{\mathrm{d}^2}{\mathrm{d}s^2} {\ms\bigl((u_s,\g_s);U\bigr)}_{|s=t} = \partial^2\ms\bigl( (u_t,\g_t);U \bigr)[X\cdot\nu_t] \nonumber \\
& + \int_{\g_t}f_t\bigl[Z\cdot\nu_t -2X^{||}\cdot\nabla_{\g_t}(X\cdot\nu_t)+D\nu_t[X^{||},X^{||}] - H_t(X\cdot\nu_t)^2 \bigr]\dh \nonumber \\
& + \sum_{i=1}^3 (X\cdot\nu_t)^2D\nu_{\po}[\nu_t,\nu_t](x_i^t)+\int_{\partial\g_t}Z\cdot\eta_t\,\dc,
\end{align}
where we set $x_i^t:=\Phi_t(x_i)$ for $i=0,1,2,3$.
Now we need to estimate each of the above terms. Fix $\zeta>0$. For the first one we appeal to Proposition ~\ref{prop:cont} to obtain
\begin{equation}\label{eq:st1}
\partial^2\ms\bigl( (u_t,\g_t);U \bigr)[X\cdot\nu_t] \geq \bar{C} \|X\cdot\nu_t\|^2_{\widetilde{H}^1(\g_t)}\,.
\end{equation}

To estimate the second term of \eqref{eq:g} we recall that Remark ~\ref{rem:conv} and the continuity of the map $\Phi\mapsto H_{\Phi}$ assert that the map
$$
\Phi\in\mathcal{D}_{\bar{\delta}_1}(\o;U) \mapsto \bigl\| |\nabla_{\g_{\Phi}}u_{\Phi}^+|^2 - |\nabla_{\g_{\Phi}}u_{\Phi}^-|^2 + H_{\Phi} \bigr\|_{L^{\infty}(\g_\Phi)}
$$
is continuous with respect to the $C^2$-norm. Since by the criticality condition that quantity vanishes for $\Phi=\id$, possibly reducing $\bar{\delta}_1$, it is possible to have
$$
\bigl\| |\nabla_{\g_{\Phi}}u_{\Phi}^+|^2 - |\nabla_{\g_{\Phi}}u_{\Phi}^-|^2 + H_{\Phi} \bigr\|_{L^{\infty}(\g_\Phi)} \leq \zeta\,,
$$
for each $\Phi\in\mathcal{D}_{\bar{\delta}_1}(\o;U)$. Hence
\begin{align}\label{eq:st2}
\int_{\g_t} &f_t\bigl[Z\cdot\nu_t -2X^{||}\cdot\nabla_{\g_t}(X\cdot\nu_t)+D\nu_t[X^{||},X^{||}] - H_t(X\cdot\nu_t)^2 \bigr]\dh \nonumber \\
& \geq -\zeta \|Z\cdot\nu_t -2X^{||}\cdot\nabla_{\g_t}(X\cdot\nu_t)+D\nu_t[X^{||},X^{||}] - H_t(X\cdot\nu_t)^2 \|_{L^1(\g_t)} \nonumber \\
&\geq C \|X\cdot\nu_t\|^2_{\widetilde{H}^1(\g_t)}\,,
\end{align}
where in the last step we used estimates \eqref{eq:stima1} and \eqref{eq:stima2} provided by Proposition~\ref{prop:x}.

To estimate the last term we recall that $Z\equiv0$ in a neighborhood of $x_0$.
Thus, we can rewrite the last term as
\begin{align*}
\sum_{i=1}^3 (X\cdot\nu_t)^2D\nu_{\po}&[\nu_t,\nu_t](x_i^t)+\int_{\partial\g}Z\cdot\eta_t\,\dc\\
&=\sum_{i=1}^3 \bigl[(X\cdot\nu_t)^2D\nu_{\po}[\nu_t,\nu_t] + Z\cdot\eta_t \bigr](x_i^t)\\
&= \sum_{i=1}^3 \bigl[(X\cdot\nu_t)^2D\nu_{\po}[\nu_t,\nu_t] + Z\cdot(\eta_t-\nu_{\po}) + Z\cdot\nu_{\po} \bigr](x_i^t)\\
&= \sum_{i=1}^3 \bigl[-(X\cdot\eta_t)^2D\nu_{\po}[\eta_t,\eta_t] + Z\cdot(\eta_t-\nu_{\po}) \bigr](x_i^t)\,,
\end{align*}
where we have used equality \emph{(iii)} of Lemma ~\ref{lem:geom}.
We claim that it is possible to choose $\bar{\delta}_1$ in such a way that
\begin{equation}\label{eq:cond1}
|X\cdot\eta_t(x_i^t)|^2\leq\zeta|X\cdot\nu_t(x_i^t)|^2\,,
\end{equation}
\begin{equation}\label{eq:cond2}
|\eta_t-\nu_{\po}|(x_i^t)\leq\zeta\,,
\end{equation}
and
\begin{equation}\label{eq:cond3}
|Z(x_i^t)|\leq C\|X\cdot\nu_t\|^2_{H^1(\g^i)}\,,
\end{equation}
for all $i=1,2,3$. Indeed, \eqref{eq:cond2} follows easy by noticing that $\eta(x_i)=\nu_{\po}(x_i)$ and by the identity
$$
\eta_t=\frac{D\Phi_t[\eta]}{|D\Phi_t[\eta]|}\,.
$$
To obtain \eqref{eq:cond1} we notice that from $X=\nu$ on $\po\cap\partial\g$ we get $X\cdot\eta(x_i)=0$. Then we conclude thanks to the continuity of the maps
$$
G_i: \bigl\{(x,v,w)\in \bigl(\po\cap B_{\bar{\delta}}(x_i)\bigr)\times \bigl(B_{\bar{\delta}}(\eta(x))\cap S^1 \bigr)
\times \bigl( B_{\bar{\delta}}(\nu(x))\cap S^1\bigr) \bigr\}\rightarrow\R\,,
$$
given by
$$
G_i(x,v,w):=\frac{|F(x)\cdot v|}{|F(x)\cdot w|}\,.
$$
Finally, in order to obtain \eqref{eq:cond3}, we notice that, by construction of the vector field $X$, there exists a function $\Phi\in C^2((\g)_{\bar{\delta}})$ that is constant along the trajectories of $F$, such that $X=\Phi F$ near $\po$ (see Proposition ~\ref{prop:x}). Hence
$$
Z(x_i^t)=DX[X](x_i^t)=\Phi(x_i)^2 DF[F](x_i^t)\,.
$$
Reasoning in a similar way as above, taking a $\bar{\delta}_1$ sufficiently small, we have that $|F\cdot\nu_t(x_i^t)|\geq\frac{1}{2}$, and hence $\Phi(x_i)^2\leq 2(X\cdot\nu_t)(x)^2$ for $x\in B_{\bar{\delta}}(x_i)$. Thus, we obtain the estimate
$$
|Z(x_i^t)|\leq C|X\cdot\nu_t|^2\leq C_2\|X\cdot\nu_t\|_{\widetilde{H}^1(\g)}^2\,,
$$
where $C>0$ depends only on $F$ and $\bar{\delta}$ and the last inequality follows by the Sobolev embedding.
Using \eqref{eq:cond1}, \eqref{eq:cond2} and \eqref{eq:cond3} we obtain
\begin{equation}\label{eq:st3}
\sum_{i=1}^3 \bigl[-(X\cdot\eta_t)^2D\nu_{\po}[\eta_t,\eta_t] + Z\cdot(\eta_t-\nu_{\po}) \bigr](x_i^t)\geq-C_2\zeta\|X\cdot\nu_t\|_{\widetilde{H}^1(\g)}^2\,.
\end{equation}\\
Now, combining the estimates \eqref{eq:st1}, \eqref{eq:st2} and \eqref{eq:st3}, we get that:
$$
\int_0^1 g''(t) \mathrm{d}t \geq \bigl(\bar{C}-(C_1+C_2)\zeta\bigr) \int_0^1 \|X\cdot\nu_t\|_{\widetilde{H}^1(\g)}^2 \mathrm{d}t\,.
$$
Thus, by taking $\zeta$ sufficiently small, we finally have the claimed bound \eqref{eq:st}.\\

\emph{Step 2}. It is easy to see that, given $\Phi$ as in Step 1, but with $\Phi(x_0)=x_0$, it is possible to construct a family of diffeomorphisms
$\Psi_\varepsilon:\Phi(\bar{\g})\rightarrow\bar{\o}$ such that $\Psi_\varepsilon(x_0)\neq x_0$ and $\Psi_\varepsilon\rightarrow\id$ in the $C^2$ norm,
as $\varepsilon\rightarrow0$. This implies that
$$
\ms\bigl((u_{\Psi_\varepsilon}, \Psi_\varepsilon(\Phi(\g));U\bigr)\rightarrow \ms\bigl((u_{\Phi}, (\Phi(\g);U\bigr)\,.
$$
Thus the result follows by passing to the limit in the inequality proved in the previous case.\\

\emph{Step 3}. Notice that all the previous steps have been done just by using the closeness of $\Phi$ to the identity in the $C^2$-norm. So, given a diffeomorphism $\Phi\in C^2(\bar{\o};\bar{\o})$ such that
$\|\Phi-\id\|_{C^2(\g;\bar{\o})}<\bar{\delta}$, we can find
$(\Phi_\varepsilon)_\varepsilon\subset C^3(\bar{\o};\bar{\o})$ with $\Phi_\varepsilon=\id$ in $\pdo\cup(\o\meno U')$, where $U\subset U'$, with $U'$ an admissible subdomain, such that $\Phi_\varepsilon\rightarrow\Phi$ in $C^2(\bar{\o};\bar{\o})$.
Using Remark ~\ref{rem:monot}, we know that, if $U'$ is close to $U$ in the Hausdorff sense, then $(u,\g)$ is stable also in $U'$. The result follows by passing to the limit.\\

\emph{Step 4}. Finally, the local minimality with respect to $W^{2,\infty}$-perturbations can be obtained by approximating an admissible diffeomorphism
$\Phi\in W^{2,\infty}(\bar{\o};\bar{\o})$ with a sequence of diffeomorphisms of class $C^2$ converging to $\Phi$ in the $W^{2,\infty}$-topology.\\

Finally, the isolated local minimality follows by Remark~\ref{rem:cont}.
\end{proof}


\section{Application}

In this section we would like to give some examples of critical and strictly stable triple points.

\subsection{Local minimality in a tubular neighborhood}

Here we want to prove that, under an additional assumption (similar to those of \cite{CagMorMor} and \cite{BonMor}), every critical triple point is strictly stable in a suitable tubular neighborhood, and hence a local minimizer with respect to $W^{2,p}$-variations contained in that tubular neighborhood.

\begin{proposition}
Let $(u,\g)$ be a critical triple point, and suppose that
\begin{equation}\label{eq:condpos}
H_{\partial\o}(x_i)<0\,,
\end{equation}
for each $i=1,2,3$. Then there exists $\bar{\mu}>0$ such that, for all $\mu<\bar{\mu}$, $(u,\g)$ is strictly stable in $(\g)_\mu$.
\end{proposition}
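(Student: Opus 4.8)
The plan is to exploit that, for the tubular neighborhood $U=(\g)_\mu$, the quadratic form of Definition~\ref{def:h1} splits into a purely local part on $\g$, which is coercive on $\widetilde{H}^1(\g)$, plus the nonlocal term $-2\int_{(\g)_\mu}|\nabla v_{\p}|^2\dx$, which I claim becomes negligible as $\mu\to0^+$. First I would record the effect of hypothesis \eqref{eq:condpos}. Since $(u,\g)$ is critical, each $\g^i$ meets $\po$ orthogonally, so with the chosen orientation $\nu^i(x_i)=\tau_{\po}(x_i)$, and therefore $D\nu_{\po}[\nu^i,\nu^i](x_i)=D\nu_{\po}[\tau_{\po},\tau_{\po}](x_i)=H_{\po}(x_i)$. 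Hence the boundary contribution in $\partial^2\ms$ equals $-\sum_i\p_i^2(x_i)H_{\po}(x_i)=\sum_i\p_i^2(x_i)|H_{\po}(x_i)|\ge0$, and it is the strict positivity of the weights $|H_{\po}(x_i)|>0$ that will drive the argument. Writing $Q_0[\p]:=\int_\g|\ng\p|^2\dh+\int_\g H^2\p^2\dh+\sum_i\p_i^2(x_i)|H_{\po}(x_i)|$, we then have $\partial^2\ms\bigl((u,\g);(\g)_\mu\bigr)[\p]=Q_0[\p]-2\int_{(\g)_\mu}|\nabla v_{\p}|^2\dx$.

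Next I would prove that $Q_0$ is coercive: there is $c>0$ with $Q_0[\p]\ge c\|\p\|^2_{\widetilde{H}^1(\g)}$ for all $\p$. This I would do by the compactness scheme of Lemma~\ref{lem:coerc}: if $\|\p_n\|_{\widetilde{H}^1(\g)}=1$ and $Q_0[\p_n]\to0$, then each of the three nonnegative terms vanishes, so $\p_n\rightharpoonup\p$ in $\widetilde{H}^1(\g)$ with $\ng\p=0$, i.e. each $\p_i$ is constant, while $\sum_i\p_i^2(x_i)|H_{\po}(x_i)|\to0$ forces $\p_i(x_i)=0$ (here \eqref{eq:condpos} is essential), whence $\p\equiv0$; but $\int_\g|\ng\p_n|^2\dh\to0$ together with the strong $C^0$ (hence $L^2$) convergence $\p_n\to0$ would give $\|\p_n\|_{\widetilde{H}^1(\g)}\to0$, contradicting $\|\p_n\|_{\widetilde{H}^1(\g)}=1$.

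The core of the proof is the uniform smallness of the nonlocal term,
$$
\sup_{\|\p\|_{\widetilde{H}^1(\g)}=1}\int_{(\g)_\mu}|\nabla v_{\p}|^2\dx\longrightarrow0\qquad(\mu\to0^+).
$$
By the variational characterization of $v_{\p}$ in \eqref{eq:vphi}, $\|\nabla v_{\p}\|_{L^2((\g)_\mu)}$ equals the dual norm, relative to $\|\nabla\cdot\|_{L^2}$ on $H^1_U(\o\meno\g)$, of the functional $z\mapsto\langle\div_{\g}(\p\ng u^+),z^+\rangle-\langle\div_{\g}(\p\ng u^-),z^-\rangle$; thus it suffices to bound this functional by $\omega(\mu)\|\nabla z\|_{L^2((\g)_\mu)}$ with $\omega(\mu)\to0$, uniformly in $\p$ on the unit sphere. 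The delicate point is that the $H^{\frac12}(\g)$-trace of $z$ is merely bounded (not small) by $\|\nabla z\|$, so the $H^{-\frac12}$–$H^{\frac12}$ duality alone is useless. Instead I would combine the thin-domain bound $\|z^\pm\|_{L^2(\g)}\le C\sqrt\mu\,\|\nabla z\|_{L^2((\g)_\mu)}$ (from the fundamental theorem of calculus in the normal direction, since $z=0$ on $\partial(\g)_\mu$) with a density argument: the tangent field $P:=\p\ng u^\pm\in H^{\frac12}(\g)$ vanishes at $\partial\g$ because $\ng u^\pm(x_i)=0$, so it may be approximated in $H^{\frac12}$ by smooth fields $P_\delta$ supported away from $\partial\g$, and one splits
$$
|\langle\div_{\g}P,z^\pm\rangle|\le\Big|\int_\g(\div_{\g}P_\delta)\,z^\pm\dh\Big|+|\langle\div_{\g}(P-P_\delta),z^\pm\rangle|\le C\sqrt\mu\,\|\div_{\g}P_\delta\|_{L^2}\|\nabla z\|+C\|P-P_\delta\|_{H^{\frac12}}\|\nabla z\|.
$$
Choosing first $\delta$ and then $\mu$ makes the bracket arbitrarily small; uniformity over $\|\p\|_{\widetilde{H}^1(\g)}=1$ follows since $\{\p\ng u^\pm:\|\p\|_{\widetilde{H}^1(\g)}\le1\}$ is precompact in $H^{\frac12}(\g)$ (bounded in $H^1$, hence precompact in $H^{\frac12}$, and multiplication by the fixed field $\ng u^\pm$ is continuous by Lemma~\ref{lem:fss}). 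This yields $\int_{(\g)_\mu}|\nabla v_{\p}|^2\dx\le\eta(\mu)\|\p\|^2_{\widetilde{H}^1(\g)}$ with $\eta(\mu)\to0$.

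I expect this third step to be the main obstacle, precisely because the naive trace estimate does not register the confinement: the gain $\sqrt\mu$ appears only in the $L^2$-trace, so approximating the $H^{\frac12}$ datum $\div_{\g}(\p\ng u^\pm)$ by $L^2$ data is exactly what converts the thinness of $(\g)_\mu$ into genuine smallness, and this must be performed uniformly in $\p$. Finally I would combine the two estimates: for $\mu<\bar\mu$ chosen so that $2\eta(\mu)<c/2$,
$$
\partial^2\ms\bigl((u,\g);(\g)_\mu\bigr)[\p]\ge\bigl(c-2\eta(\mu)\bigr)\|\p\|^2_{\widetilde{H}^1(\g)}\ge\tfrac{c}{2}\|\p\|^2_{\widetilde{H}^1(\g)}>0
$$
for every $\p\in\widetilde{H}^1(\g)\meno\{0\}$, which is precisely the strict stability of $(u,\g)$ in $(\g)_\mu$.
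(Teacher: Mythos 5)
Your argument is correct, and it reaches the conclusion by a genuinely different route in the decisive step. The decomposition into a local form $Q_0$ plus the nonlocal term, and the use of \eqref{eq:condpos} through $D\nu_{\po}[\nu^i,\nu^i](x_i)=H_{\po}(x_i)$, coincide with the paper's Step 1; there the coercivity of $Q_0$ is obtained from the one-dimensional inequality $\int_{\g^i}|\nabla_{\g}\p_i|^2\dh-\p_i^2(x_i)H_{\po}(x_i)\geq C\int_{\g^i}|\p_i|^2\dh$ rather than by your compactness/contradiction scheme --- same content, different packaging. The real divergence is in proving $\sup_{\|\p\|_{\widetilde{H}^1(\g)}=1}\int_{(\g)_\mu}|\nabla v^\mu_\p|^2\dx\to0$. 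The paper selects a maximizer $\bar{\p}^\mu$ for each $\mu$, derives a $\mu$-uniform energy bound from the coercivity of the associated functional $F^\mu$, extracts a weak $H^1$-limit which must vanish, and then passes to the limit in the equation tested with $v^\mu_{\bar{\p}^\mu}$ itself, invoking compactness of the trace operator together with the uniform $H^{-\frac{1}{2}}$ bound on the data. You instead bound directly the dual norm of the right-hand side of \eqref{eq:vphi}, via the thin-domain trace inequality $\|z^\pm\|_{L^2(\g)}\le C\sqrt{\mu}\,\|\nabla z\|_{L^2((\g)_\mu)}$ combined with density of smooth compactly supported fields in $H^{\frac{1}{2}}(\g)$ (valid precisely because $s=\tfrac12$ is the critical exponent on a curve) and precompactness of $\{\p\nabla_\g u^\pm:\|\p\|_{\widetilde{H}^1(\g)}\le1\}$ in $H^{\frac{1}{2}}(\g)$ for the uniformity. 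This buys an explicit modulus $\eta(\mu)$ and, more importantly, makes precise exactly where compactness enters: weak convergence of the traces alone would not suffice against data that is merely bounded in $H^{-\frac{1}{2}}(\g)$, and your density step is what converts the $\sqrt{\mu}$ gain in $L^2$ into smallness of the $H^{-\frac{1}{2}}$--$H^{\frac{1}{2}}$ pairing. The only point deserving an extra word in a full write-up is the thin-domain trace inequality near $x_0$ and near the endpoints $x_i\in\po$, where the normal segments must be replaced by short transversal paths inside $\o$ reaching the set $\o\meno(\g)_\mu$ on which $z$ vanishes; this is routine and does not affect the argument.
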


\begin{proof}
\emph{Step 1}. First of all we prove that there exists a constant $C>0$ such that:
$$
\int_{\g} |\nabla_{\g}\p|^2\,\dh\,+\,\int_{\g}H^2\p^2\,\dh\,-\,\sum_{i=1}^3\p_i^2(x_i)H_{\partial\o}(x_i)\geq C\|\p\|^2_{\widetilde{H}^1(\g)}\,,
$$
for all $\p\in\widetilde{H}^1(\g)$. Indeed, by using the Sobolev embeddings, it is easy to see that
$$
\int_{\g^i} |\nabla_{\g}\p_i|^2\,\dh-\,\p_i^2(x_i)H_{\partial\o}(x_i)\geq C\int_{\g^i}|\p_i|^2\dh\,.\\
$$

\emph{Step 2}. The only thing we have to prove now is that
\begin{equation}\label{eq:conve}
\lim_{\mu\rightarrow0}\, \sup_{\substack{\p\in\widetilde{H}^1(\g),\\ \|\p\|_{\widetilde{H}^1(\g)}=1}}
                          \int_{(\g)_\mu} |\nabla v^\mu_\p|^2\,\mathrm{d}x = 0\,,
\end{equation}
where $v^\mu_{\p}\in H^1_{(\g)_\mu}(\o\meno\g)$ is the solution of
\begin{equation}\label{eq:v}
\int_{\o}\nabla v^\mu_{\p}\cdot\nabla z\,\mathrm{d}x = \langle \div_{\g}\bigl( \p\nabla_{\g}u^+ \bigr), z^+ \rangle_{H^{-\frac{1}{2}}(\g)\times H^{\frac{1}{2}}(\g)} - \langle \div_{\g}\bigl( \p\nabla_{\g}u^- \bigr), z^- \rangle_{H^{-\frac{1}{2}}(\g)\times H^{\frac{1}{2}}(\g)}\,,
\end{equation}
for every $z\in H^1_{(\g)_\mu}(\o\meno\g)$. For each $\mu>0$, let $\bar{\p}^\mu\in\widetilde{H}^1(\g)$, with $\|\bar{\p}^\mu\|_{\widetilde{H}^1(\g)}=1$, be such that
$$
\int_{(\g)_\mu} |\nabla v^\mu_{\bar{\p}^\mu}|^2\,\mathrm{d}x\,=\,\sup_{\substack{\p\in\widetilde{H}^1(\g),\\ \|\p\|_{\widetilde{H}^1(\g)}=1}}
                          \int_{(\g)_\mu} |\nabla v^\mu_\p|^2\,\mathrm{d}x\,.
$$
Consider, for $\mu>0$, the following minimum problem: 
$$
\min\{F^\mu(v)\,:\, v\in H^1_{(\g)_\mu}(\o\meno\g) \}\,,
$$
where we define the functional
\begin{align*}
F^\mu(v)&:=\frac{1}{2}\int_{(\g)_\mu}|\nabla v|^2\,\mathrm{d}x\,-\, \langle \div_{\g}\bigl( \bar{\p}^\mu\nabla_{\g}u^+ \bigr), v^+ \rangle_{H^{-\frac{1}{2}}(\g)\times H^{\frac{1}{2}}(\g)}\\
&\hspace{0.5cm}+ \langle \div_{\g}\bigl( \bar{\p}^\mu\nabla_{\g}u^- \bigr), v^- \rangle_{H^{-\frac{1}{2}}(\g)\times H^{\frac{1}{2}}(\g)}\,.
\end{align*}
Since the equation defining $v^\mu_{\bar{\p}^\mu}$ is the Euler-Lagrange equation for $F^\mu$, we have that $v^\mu_{\bar{\p}^\mu}$ is the unique solution of the above minimum problem. We claim that
\begin{equation}\label{eq:stf}
F^\mu(v)\geq \frac{1}{4}\int_{(\g)_\mu}|\nabla v|^2\,\mathrm{d}x\,-C\,,
\end{equation}
for a suitable constant $C>0$. Taking \eqref{eq:stf} for grant, we conclude. Indeed, noticing that
\begin{equation}\label{eq:monot}
\min\{F^\mu(v)\,:\, v\in H^1_{(\g)_\mu}(\o\meno\g) \}=-\frac{1}{2}\int_{(\g)_\mu}|\nabla v^\mu_{\bar{\p}^\mu}|^2\,\mathrm{d}x\,,
\end{equation}
from \eqref{eq:stf} we get that
$$
\sup_{\mu>0} \int_{(\g)_\mu}|\nabla v^\mu_{\bar{\p}^\mu}|^2\,\mathrm{d}x \leq M\,,
$$
for some $M>0$. So, up to a not relabelled subsequence, $v^\mu_{\bar{\p}^\mu}\rightharpoonup w$ weakly in $H^1(\o\meno\g)$, as $\mu\rightarrow0$. It is easy to see that $w=0$. Then, using equation \eqref{eq:v} where we take as a test function $\nabla v^\mu_{\bar{\p}^\mu}$ itself, the uniform bound on $\|\div_{\g}\bigl( \bar{\p}^\mu\nabla_{\g}u^\pm \bigr)\|_{H^{-\frac{1}{2}}(\g)}$, and the compactness of the trace operator, we finally get:
$$
\int_{(\g)_\mu}|\nabla v^\mu_{\bar{\p}^\mu}|^2\,\mathrm{d}x \rightarrow0\,,\quad\text{ as } \mu\rightarrow0.
$$
We are now left to prove estimate \eqref{eq:stf}. Fix $\bar{\mu}>0$ and let $\Phi_\mu:=\bar{\p}^\mu\nabla_{\g}u^+$. Then:
\begin{align*}
\langle \div_{\g}\bigl( \bar{\p}^\mu\nabla_{\g}u^\pm \bigr), v^+ \rangle_{H^{-\frac{1}{2}}(\g)\times H^{\frac{1}{2}}(\g)} & \leq 
               \|\div_{\g}\Phi_\mu\|^2_{H^{-\frac{1}{2}}(\g)} \|v^\pm\|^2_{H^{\frac{1}{2}}(\g)} \\
& \leq C\frac{\varepsilon^2}{2}\|v\|_{H^1((\g)_{\bar{\mu}})} + \frac{C}{2\varepsilon^2} \\
& \leq C\frac{\varepsilon^2}{2}\|\nabla v\|_{L^2((\g)_{\bar{\mu}})} + \frac{C}{2\varepsilon^2}\,.
\end{align*}
Thus, taking $\varepsilon>0$ sufficiently small, we obtain the desired estimate.
\end{proof}

\begin{remark}\label{rem:monot}
In view of the result of Theorem ~\ref{thm:minC2}, it is not restrictive to suppose an admissible set $U$ to be of class $C^\infty$, meeting $\partial\o$ orthogonally. Indeed, by \eqref{eq:monot}, it follows that 
$$
-\int_{U_1}|\nabla v_\p|^2\,\mathrm{d}x \geq -\int_{U_2}|\nabla v_\p|^2\,\mathrm{d}x\,,
$$
for every $\p\in\widetilde{H}^1(\g)$, whenever $U_1$ and $U_2$ are admissible subdomains such that $U_1\subset U_2$. Hence, given a regular critical and strictly stable triple point $(u,\g)$ and generic admissible subdomain $U$, we can write $U=\bigcap_n U_n$, with $U_n$ admissible subdomains where $(u,\g)$ is strictly stable, that are of class $C^\infty$ meeting $\partial\o$ orthogonally.
\end{remark}




\section{Appendix}

\subsection{Results on elliptic problems} 

The following theorem collects some regularity results on elliptic problems in domains with corners we will need in the following.
All these results can be found in the book of Grisvard (see \cite{Gris}).\\

\textbf{Notation}. In this section we will consider operators $L$ written in the form
$$
Lu = -\sum_{i,j=1}^2 D_i(a_{ij}D_ju) + \sum_{i=1}^2 a_i D_iu + a_0u\,,
$$
where $D_i$ denotes the partial derivatives with respect to the variable $x_i$.

\begin{definition}
We say that an open and bounded set $A\subset\R^2$ is a \emph{curvilinear polygon} of class $C^{r,s}$, with $r\in\N$ and $s\in(0,1]$, if the following are satisfied
\begin{itemize}
\item[(i)] $\partial A$ is a simple and connected curve that can be written as
$$
\partial A=\cup_{i=1}^k\bar{\gamma}_i\,,
$$
where each $\gamma_i$ is a open curve of class $C^{r,s}$ up to its closure, and $k\in\N$,
\item[(ii)] denoting by $P_i$ the common boundary point of $\gamma_i$ and $\gamma_{i+1}$ (and by $P_k$ the common one of $\gamma_k$ and $\gamma_1$), and by $\omega_i$ the angle in $P_i$ internal to $A$, we have $\omega_i\in(0,2\pi)$.
\end{itemize}
\end{definition}

\begin{theorem}\label{thm:regu}
Let $A$ be  a curvilinear polygon of class $C^{1,1}$, and let $L$ be an elliptic operator defined on $A$, with coefficients of class $C^{0,1}$.
Then, the following a priori estimate holds true:
\begin{equation}\label{eq:aprst}
\|u\|_{H^2(A)}\leq C_1\bigl( \|Lu\|_{L^2(A)} + \|\partial_{\nu}u\|_{H^{\frac{1}{2}}(\partial A)} + \|u\|_{H^{\frac{3}{2}}(\partial A)}  \bigr) + C_2\|u\|_{H^1(A)}\,,
\end{equation}
for suitable constants $C_1, C_2\geq 0$ and for all $u\in H^2(A)$.\\

Given $f\in L^2(A)$, let $u\in H^1(A)$ be a weak solution of the problem
$$
\left\{
\begin{array}{ll}
Lu=f & \text{ in } A\,,\\
\partial_{\nu}u = 0 & \text{ on } \gamma_i\,,\text{ for } i\in\mathcal{N}\,,\\
u = 0 & \text{ on } \gamma_i\,,\text{ for } i\in\mathcal{D}\,,\\
\end{array}
\right.
$$
where $\mathcal{N},\mathcal{D}$ is a partition of $\{1,\dots,N\}$. Then $u$ can be written as
$$
u=u_{reg}+\sum_{i=1}^{N}u^i_{sing}\,,
$$
where $u_{reg}\in H^2(A)$ and $u^i_{sing}\in H^1(A)$ are such that $u^i_{sing}\in H^2(V_i)$ for each open set $V_i$ such that $P_i\not\in\bar{V_i}$.\\

Finally, suppose that
$$
\omega_i\leq\left\{
\begin{array}{ll}
\pi & \text{ if } j,j+1\in\mathcal{D}\,,\text{ or } j,j+1\in\mathcal{N}\,,\\
\frac{\pi}{2} & \text{ otherwise }\,.
\end{array}
\right.
$$
Then $u\in H^2(A)$. Moreover, if $\mathcal{D}$ is not empty, \eqref{eq:aprst} holds with $C_2=0$.
\end{theorem}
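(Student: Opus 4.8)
The plan is to reduce all three assertions to local model problems, following the classical analysis of Grisvard, by means of a partition of unity and a freezing-of-coefficients argument. To prove the a priori estimate \eqref{eq:aprst}, I would cover $\bar{A}$ by finitely many open sets of three types: interior balls, boundary neighbourhoods centred at smooth points of $\partial A$, and one neighbourhood $W_i$ of each vertex $P_i$. Taking a subordinate partition of unity $\{\zeta_\alpha\}$ and estimating $\|\zeta_\alpha u\|_{H^2}$ on each piece, the interior pieces are handled by the standard Agmon--Douglis--Nirenberg interior estimate (valid since the $a_{ij}$ are $C^{0,1}$), and the smooth boundary pieces by flattening $\partial A$ with a $C^{1,1}$ chart, freezing the leading coefficients at the base point, and invoking the half-space $H^2$ estimate for the Neumann or Dirichlet problem. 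The discrepancy between the frozen and the true operator is controlled by the oscillation of the coefficients, which is small on a small neighbourhood and hence absorbable into the left-hand side, the remainder being dominated by $\|u\|_{H^1(A)}$.

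The genuinely new contribution comes from the vertex pieces: near each $P_i$ one freezes $L$ at $P_i$, maps $W_i$ onto a circular sector of opening $\omega_i$, and uses the constant-coefficient $H^2$ estimate on the sector with the boundary conditions prescribed on its two sides. Summing all the local estimates and collecting the lower-order and commutator contributions into $C_2\|u\|_{H^1(A)}$ yields \eqref{eq:aprst}. For the decomposition $u=u_{reg}+\sum_i u^i_{sing}$, the key observation is that the only obstruction to $H^2$ regularity is concentrated at the corners, so interior and flat-boundary regularity already give $u\in H^2$ away from the $P_i$'s. Near each vertex one analyses the operator pencil associated with the frozen problem on the sector: in polar coordinates $(r,\theta)$ the homogeneous solutions have the form $r^{\lambda}\phi_\lambda(\theta)$, where $\lambda$ ranges over the eigenvalues of the corresponding Sturm--Liouville problem on $(0,\omega_i)$ with the given conditions. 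The finitely many eigenvalues with $0<\lambda<1$ produce the singular functions $u^i_{sing}$; subtracting a smoothly truncated combination of them leaves $u_{reg}\in H^2(A)$, while each $u^i_{sing}$ is smooth, hence $H^2$, away from its own vertex, which is exactly the stated property on any $V_i$ with $P_i\notin\bar{V_i}$.

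The explicit spectrum then makes the last claim transparent. For the frozen Laplacian on a sector of opening $\omega$, the Dirichlet--Dirichlet and Neumann--Neumann problems give $\lambda=k\pi/\omega$ with $k\geq1$, whose smallest value is $\geq1$ precisely when $\omega\leq\pi$; the mixed Dirichlet--Neumann problem gives $\lambda=(2k-1)\pi/(2\omega)$ with $k\geq1$, smallest value $\geq1$ precisely when $\omega\leq\pi/2$. Under the stated hypotheses on the $\omega_i$ (the two alternatives corresponding exactly to $j,j+1$ lying in the same part or in different parts of $\mathcal{N},\mathcal{D}$), no eigenvalue lies in $(0,1)$, every singular function is already $H^2$, and the decomposition collapses to $u=u_{reg}\in H^2(A)$. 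When $\mathcal{D}\neq\emptyset$ the Dirichlet condition rules out constants, so a Poincar\'e inequality makes $L$ coercive on the energy space; this lets one absorb the $\|u\|_{H^1(A)}$ term and obtain \eqref{eq:aprst} with $C_2=0$.

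I expect the main obstacle to be the vertex analysis throughout: freezing the merely $C^{0,1}$ coefficients at each $P_i$ and showing the resulting perturbation is absorbable on a small enough $W_i$, together with correctly identifying the operator pencil and its spectrum for each combination of boundary conditions. This is precisely the point where the $C^{1,1}$ regularity of $\partial A$ and the Lipschitz regularity of the coefficients are used in full, and it is what renders the direct appeal to Grisvard's results the most economical route.
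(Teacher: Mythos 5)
The paper does not actually prove this theorem: it is presented in the appendix as a collection of classical results quoted from Grisvard's book \cite{Gris}, so there is no in-paper argument to compare against. Your sketch is a faithful outline of the standard proof found there --- localization by a partition of unity, interior and flat-boundary estimates after freezing the $C^{0,1}$ coefficients, reduction to the model problem on a sector of opening $\omega_i$, and the operator-pencil/Sturm--Liouville analysis producing the exponents $k\pi/\omega$ (same condition on both sides) and $(2k-1)\pi/(2\omega)$ (mixed conditions) --- and the thresholds $\omega\leq\pi$ and $\omega\leq\pi/2$ you extract match the hypotheses of the statement exactly. The one point where your argument is looser than it should be is the final claim that $\mathcal{D}\neq\emptyset$ yields $C_2=0$: for an operator with general lower-order terms $a_iD_iu+a_0u$ a Poincar\'e inequality alone does not make the bilinear form coercive, and the clean route is a compactness--uniqueness argument (assume the estimate fails along a normalized sequence, pass to a weak limit solving the homogeneous problem, and invoke uniqueness); in the paper's actual application the operators are small perturbations of the Laplacian, for which your coercivity argument does go through.
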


Finally, we need a continuity theorem for elliptic problems (for a proof see, \emph{e.g.}, \cite[Remark~2.2]{Lazz}).

\begin{theorem}\label{thm:ellcont}
Let $(L_s)_{s\in(-\delta,\delta)}$ be a family of uniformly elliptic operators defined on a curvilinear polygon $A$ of class $C^{1,1}$, and let $\mathcal{N},\mathcal{D}$ be a partition of $\{1,\dots,N\}$, with $\mathcal{D}\neq\emptyset$. Suppose that, for $s\in(-\delta,\delta)$, the functions
$$
s\mapsto a_{ij}(\cdot,s)\,\quad\quad s\mapsto a_i(\cdot,s)\,,\quad\quad s\mapsto a_0(\cdot,s)\,,
$$
and
$$
s\mapsto f_s\,\in H^{-1}(A)
$$
are continuous and that there exists a constant $M>0$ such that
$$
|a_{ij}(x,s)|\leq M\,,\quad\quad |a_i(x,s)|\leq M\,,\quad\quad |a_0(x,s)|\leq M \,,
$$
for all $s\in (-\delta,\delta)$ and for a.e. $x\in A$ and . Given $v\in H^1(A)$ let us consider the operator
$$
\begin{array}{cccc}
T: & (-\delta,\delta) & \rightarrow & H^1 \\
   & s & \mapsto & u_s
\end{array}
$$
where $u_s$ is a weak solution of the problem
$$
\left\{
\begin{array}{ll}
L_su=f_s & \text{ in } A\,,\\
\partial_{\nu}u = 0 & \text{ on } \gamma_i\,,\text{ for } i\in\mathcal{N}\,,\\
u_s = v & \text{ on } \gamma_i\,,\text{ for } i\in\mathcal{D}\,.\\
\end{array}
\right.
$$
Then $T$ is continuous.
\end{theorem}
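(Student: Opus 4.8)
The plan is to prove continuity of $T$ at an arbitrary fixed $s_0\in(-\delta,\delta)$ by a compactness-and-uniqueness argument, the crucial point being that the bilinear forms associated to the $L_s$ are in general only G\aa rding-coercive (because of the lower order terms $a_iD_i$ and $a_0$) rather than coercive, so one cannot simply invoke Lax--Milgram with a constant that is uniform in $s$. I would work in the closed energy space $V:=\{w\in H^1(A):w=0\text{ on }\gamma_i,\ i\in\mathcal{D}\}$, in which the Neumann conditions on the arcs $\gamma_i$, $i\in\mathcal{N}$, are encoded weakly, and for $w,\phi\in V$ set
$$
B_s(w,\phi):=\int_A\Big(\sum_{i,j}a_{ij}(\cdot,s)D_jw\,D_i\phi+\sum_i a_i(\cdot,s)D_iw\,\phi+a_0(\cdot,s)\,w\phi\Big)\,\mathrm dx,
$$
so that $u_s$ is characterized by $u_s-v\in V$ and $B_s(u_s,\phi)=\langle f_s,\phi\rangle$ for all $\phi\in V$. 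Since $\mathcal{D}\neq\emptyset$ and $A$ is Lipschitz (a curvilinear polygon), the Poincar\'e inequality holds on $V$, and uniform ellipticity yields the $s$-uniform G\aa rding bound $B_s(z,z)\ge\tfrac{\lambda}{2}\|\nabla z\|_{L^2}^2-C\|z\|_{L^2}^2$ for every $z\in V$. I also record that uniqueness of the solution of the homogeneous problem is part of the standing hypotheses, since it is exactly what makes $T$ a well-defined map.

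First I would establish a uniform a priori bound: there are $C_0>0$ and a neighbourhood of $s_0$ with $\|u_s\|_{H^1(A)}\le C_0$. I would argue by contradiction. If $\|w_{s_n}\|_{H^1}\to\infty$ for some $s_n\to s_0$, where $w_s:=u_s-v\in V$, normalise $\hat w_n:=w_{s_n}/\|w_{s_n}\|_{H^1}$, so $\|\hat w_n\|_{H^1}=1$, and extract (by Rellich on the Lipschitz domain) a subsequence with $\hat w_n\rightharpoonup\hat w$ in $H^1$ and $\hat w_n\to\hat w$ in $L^2$. Passing to the limit in the normalised equation for a fixed test $\phi\in V$, the products $a_{ij}(\cdot,s_n)D_i\phi\to a_{ij}(\cdot,s_0)D_i\phi$ strongly in $L^2$ (dominated convergence, using the continuity in $s$ and the uniform bound $M$) pair against the weakly convergent $D_j\hat w_n$, giving $B_{s_0}(\hat w,\phi)=0$ for all $\phi$, whence $\hat w=0$ by uniqueness. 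The G\aa rding inequality then forces $\tfrac{\lambda}{2}\|\nabla\hat w_n\|_{L^2}^2\le B_{s_n}(\hat w_n,\hat w_n)+C\|\hat w_n\|_{L^2}^2\to0$, so $\|\hat w_n\|_{H^1}\to0$, contradicting $\|\hat w_n\|_{H^1}=1$.

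With this bound in hand I would run the same scheme on the differences $z_s:=u_s-u_{s_0}\in V$. Subtracting the two equations and testing with $z_s$ gives
$$
B_s(z_s,z_s)=\langle f_s-f_{s_0},z_s\rangle-(B_s-B_{s_0})(u_{s_0},z_s),
$$
and the right-hand side tends to $0$ as $s\to s_0$: the first term because $\|f_s-f_{s_0}\|_{H^{-1}}\to0$ while $\|z_s\|_{H^1}$ stays bounded, and the second because each coefficient difference multiplying the \emph{fixed} function $u_{s_0}$ converges to $0$ strongly in $L^2$ (dominated convergence) against the bounded factors $\nabla z_s$, $z_s$. Hence $B_s(z_s,z_s)\to0$. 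Any weak $H^1$-limit $z$ of the bounded family $z_s$ satisfies $B_{s_0}(z,\phi)=0$, so $z=0$ by uniqueness, and Rellich gives $\|z_s\|_{L^2}\to0$; the G\aa rding inequality then yields $\|\nabla z_s\|_{L^2}\to0$ and finally $\|z_s\|_{H^1}\to0$. Since $s_0$ was arbitrary, $T$ is continuous.

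The main obstacle, and the reason a naive Lax--Milgram perturbation estimate fails, is precisely the absence of coercivity: the first order and zeroth order terms prevent a direct energy estimate with an $s$-uniform constant. One must therefore trade coercivity for the combination of (i) the $s$-uniform G\aa rding inequality, (ii) the compactness of the embedding $H^1(A)\hookrightarrow L^2(A)$ on the Lipschitz curvilinear polygon, and (iii) the uniqueness of the limiting problem, which together upgrade the weak/strong subconvergence to genuine strong $H^1$-convergence near $s_0$. A secondary technical care is that the passages to the limit must survive even though the test function $z_s$ itself varies with $s$; this is why the coefficient-difference contributions are arranged as strong-times-(weakly) bounded products in $L^2$ rather than estimated in operator norm. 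Note finally that the corner geometry plays no essential role here, since only the $H^1$-theory (valid on Lipschitz domains) is needed; the delicate corner regularity enters elsewhere, through the $H^2$ estimate of Theorem~\ref{thm:regu}.
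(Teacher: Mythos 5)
Your argument is correct, but there is nothing in the paper to compare it against: Theorem~\ref{thm:ellcont} is quoted from the literature (the text points to \cite[Remark~2.2]{Lazz}) and no internal proof is given. Your compactness--uniqueness scheme is the standard route and all steps check out: the G\aa rding inequality is uniform in $s$ because the ellipticity constant and the bound $M$ are; the passages to the limit in $B_{s_n}(\hat w_n,\phi)$ and in $(B_s-B_{s_0})(u_{s_0},z_s)$ are legitimate weak-against-strong pairings; and the subsequence argument correctly upgrades to convergence of the full family, after which G\aa rding plus Rellich converts weak into strong $H^1$ convergence. Two points are worth making explicit. First, you are right that uniqueness for the homogeneous problem at each $s$ must be read into the statement, since otherwise $T$ is not a map; in the paper's only application the operators are small $C^1$ perturbations of the Laplacian with $\mathcal{D}\neq\emptyset$, so the forms are genuinely coercive with an $s$-uniform constant and a direct Lax--Milgram perturbation estimate would in fact suffice there --- your proof is more general than what the paper needs. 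Second, the hypothesis that $s\mapsto a_{ij}(\cdot,s)$ is ``continuous'' is not given a topology in the statement; your use of dominated convergence presumes pointwise a.e.\ continuity in $s$ together with the uniform bound $M$, which is the weakest reasonable reading and the one under which the proof goes through (under the stronger reading of continuity into $L^\infty(A)$ the same steps only get easier).
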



\subsection{Extension results}

We start by stating the version of the Whitney's extension theorem needed in the proof of Proposition \ref{prop:pos}.
We first need to set some notation. Given $\textbf{k}=(k_1,\dots,k_n)\in\mathbb{N}^2$ and $v\in\R^2$, let
$$
|\textbf{k}|:=k_1+k_2\,,\quad\quad v^{\textbf{k}}:=v_1^{k_1}v_2^{k_2}\,.
$$
If $f$ is a $|\textbf{k}|$-times differentiable function, we set
$$
D^{\textbf{k}}f(x):=\frac{\partial^{|\textbf{k}|}f}{\partial x^{\textbf{k}}}(x)=\frac{\partial^{|\textbf{k}|}f}{\partial x_1^{k_1}x_2^{k_2}}(x)\,,
$$
where $D^{\textbf{0}}=f$.

\begin{definition}\label{def:regX}
Let $X$ be a compact subset of $\R^2$. We define the space $C^h(X)$ as the space of functions $f:X\rightarrow\R$ for which there exists a family
$\mathcal{F}:=\{F^{\textbf{k}}\}_{|\textbf{k}|\leq h}$ of continuous functions on $X$, with $F^{\textbf{0}}=f$, such that, for every $|\textbf{k}|\leq h$, it holds
\begin{equation}\label{eq:W}
\sup_{x,y\in X,\, 0<|x-y|<r}\,\Bigl| F^{\textbf{k}}(x)-F^{\textbf{k}}(y) - \sum_{|\textbf{j}|=1}^{h-|\textbf{k}|} F^{\textbf{j}}(x)(y-x)^{\textbf{k}+\textbf{j}} \Bigr|=o(r^{h-|\textbf{k}|})\,.
\end{equation}
Moreover, we define
$$
\|\mathcal{F}\|_{C^h(X)}:=\sum_{|\textbf{k}|\leq h}\|F^{\textbf{k}}\|_{C^0(X)}\,.
$$
\end{definition}

\begin{theorem}[Whitney's extension theorem]
For every $h\geq1$ and $L>0$ there exists a constant $C_0>0$, depending on $h$ and $L$, with the following property: if $X\subset B_L$ is a compact set of $\R^2$ and $f\in C^h(X)$, then there exists a function $\widetilde{f}\in C^{\infty}(\R^2\meno X)\cap C^{h}(\R^2)$ such that
$$
D^{\textbf{k}}\widetilde{f} = F^{\textbf{k}}\quad\text{on } X\,,\,\,\,\text{ for every } |\textbf{k}|\leq h\,,
$$
and
$$
\|\widetilde{f}\|_{C^h(\R^2)}\leq C_0\|\mathcal{F}\|_{C^h(X)}\,.\\
$$
\end{theorem}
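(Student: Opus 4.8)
The plan is to reproduce the classical Whitney construction: decompose the complement of $X$ into dyadic cubes, glue local Taylor polynomials (built from the prescribed jet $\mathcal{F}$) by means of a smooth subordinate partition of unity, and then verify that the resulting function attains the prescribed derivatives on $X$ together with the quantitative bound. First I would perform a Whitney decomposition of the open set $\R^2\meno X$ into a countable family of closed dyadic cubes $\{Q_j\}_j$ with pairwise disjoint interiors, chosen so that the side length $\ell_j$ of $Q_j$ is comparable to its distance from $X$, say $\ell_j\leq\mathrm{dist}(Q_j,X)\leq 4\sqrt{2}\,\ell_j$. This comparability is the geometric engine of the whole argument: it guarantees that the slightly dilated cubes $Q_j^{\ast}:=\frac{9}{8}Q_j$ have uniformly bounded overlap and that, for $x\in Q_j$, a nearest point $p_j\in X$ satisfies $|x-p_j|\leq C\ell_j$. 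Subordinate to the covering $\{Q_j^{\ast}\}_j$ I would fix a smooth partition of unity $\{\varphi_j\}_j$ with the standard derivative bounds $|D^{\textbf{k}}\varphi_j|\leq C\ell_j^{-|\textbf{k}|}$ for every $|\textbf{k}|\leq h$, where $C$ depends only on $h$.

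Next, to each cube $Q_j$ I would associate the point $p_j\in X$ above and the Taylor polynomial $P_j(x):=\sum_{|\textbf{k}|\leq h}\frac{1}{\textbf{k}!}F^{\textbf{k}}(p_j)(x-p_j)^{\textbf{k}}$, and then define
$$
\widetilde{f}(x):=\begin{cases} f(x) & x\in X\,,\\ \sum_j\varphi_j(x)P_j(x) & x\in\R^2\meno X\,. \end{cases}
$$
On $\R^2\meno X$ this is a locally finite sum of polynomials multiplied by smooth cut-offs, so $\widetilde{f}\in C^{\infty}(\R^2\meno X)$ is immediate, and all of its derivatives there are controlled by differentiating the sum and using the bounds on the $\varphi_j$ together with $\|\mathcal{F}\|_{C^h(X)}$ (the inclusion $X\subset B_L$ bounds the factors $(x-p_j)^{\textbf{k}}$ uniformly, which is where the dependence of $C_0$ on $L$ enters).

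The hard part will be to show that $\widetilde{f}\in C^h(\R^2)$ with $D^{\textbf{k}}\widetilde{f}=F^{\textbf{k}}$ on $X$, that is, that near every point of $X$ each derivative $D^{\textbf{k}}\widetilde{f}$ extends continuously with the prescribed value and obeys the Whitney condition \eqref{eq:W}. The estimates needed are of two kinds. First, for adjacent cubes $Q_i,Q_j$ one must bound the difference of the associated polynomials, $|D^{\textbf{k}}(P_i-P_j)(x)|\leq C\,\ell_j^{\,h-|\textbf{k}|}\,\rho(\ell_j)$ for $x\in Q_i^{\ast}\cap Q_j^{\ast}$, where $\rho(r)\to0$ as $r\to0^+$ encodes the $o(r^{h-|\textbf{k}|})$ in \eqref{eq:W}; this is obtained by expressing $P_i-P_j$ as a Taylor-type remainder and invoking the compatibility hypothesis at $p_i,p_j\in X$, whose mutual distance is comparable to $\ell_j$. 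Since $\sum_j\varphi_j\equiv1$, on each cube one may subtract a fixed $P_{i_0}$ and write $D^{\textbf{k}}\widetilde{f}-D^{\textbf{k}}P_{i_0}=\sum_j D^{\textbf{k}}\bigl(\varphi_j(P_j-P_{i_0})\bigr)$; distributing the derivative and combining the two families of bounds (those on $\varphi_j$ and those on $P_i-P_j$) produces the decay $o(r^{h-|\textbf{k}|})$ as $x$ approaches $X$. Second, a direct Taylor estimate at $p_j$ shows $D^{\textbf{k}}\widetilde{f}(x)\to F^{\textbf{k}}(p)$ as $x\to p\in X$, which identifies the boundary values. Assembling these over all $|\textbf{k}|\leq h$ and tracking that every constant depends only on $h$ and $L$ yields simultaneously $\widetilde{f}\in C^h(\R^2)$ and the bound $\|\widetilde{f}\|_{C^h(\R^2)}\leq C_0\|\mathcal{F}\|_{C^h(X)}$. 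I expect the genuinely delicate point to be precisely the adjacent-cube polynomial estimate and its telescoping through the partition of unity to recover the exact Whitney decay rate near limit points of $X$ approached from the complement, since that is where the compatibility condition \eqref{eq:W} must be used at full strength.
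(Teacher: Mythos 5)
The paper does not prove this statement at all: it is quoted as the classical Whitney extension theorem (with a pointer to the standard literature implicit), so there is no ``paper proof'' to compare against. Your proposal is the standard Whitney--Stein construction --- dyadic Whitney decomposition of $\R^2\meno X$ with side lengths comparable to the distance to $X$, a subordinate partition of unity with derivative bounds $\ell_j^{-|\textbf{k}|}$, Taylor polynomials of the jet at nearest points, and the telescoping estimate on $P_i-P_j$ for adjacent cubes via the compatibility condition --- and as an outline it is correct; this is exactly how the theorem is proved in the references the paper relies on. Two small points deserve attention. First, as stated the global bound $\|\widetilde{f}\|_{C^h(\R^2)}\leq C_0\|\mathcal{F}\|_{C^h(X)}$ does not follow from the raw construction, because for $x$ far from $X$ the polynomials $P_j(x)$ grow like $|x|^{h}$; one must multiply the extension by a smooth cutoff equal to $1$ on, say, $B_{2L}$ and supported in $B_{3L}$, which is also where the dependence of $C_0$ on $L$ genuinely enters (your remark that $X\subset B_L$ bounds $(x-p_j)^{\textbf{k}}$ is only valid on a bounded set). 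Second, you have silently corrected the paper's displayed compatibility condition \eqref{eq:W}, which as printed lacks the factorials and uses $F^{\textbf{j}}$ where the standard jet condition requires $F^{\textbf{k}+\textbf{j}}/\textbf{j}!$; your adjacent-cube estimate needs the standard (corrected) form, and since $X$ is compact the $o(r^{h-|\textbf{k}|})$ can indeed be taken with a uniform modulus, as your argument implicitly assumes.
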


We now present a technical result regarding the extension of a function defined on the boundary of a set to functions defined in the whole set.
The \emph{raison d'\^{e}tre} of this result (instead of making use of the Whitney extension Theorem) is because we will apply it in \ref{prop:x}, where we are not able to provide the estimates at $x_0$ needed in order to apply Whitney's theorem. Moreover we also need to control the behavior of the functions $t\mapsto\Phi_t(x)$ for $x\in\o$ and this turns out to be more clear by using our extension result.

\begin{lemma}\label{lem:ext}
Let $A\subset\R^2$ be an open bounded set whose boundary is a curvilinear polygon of class $C^k$.
Let us write $\partial A=\cup_{i=1}^N\bar{\gamma}^i$, with $\gamma^i$ open curve of class $C^k$.
Assume that for each $i=1,\dots,N$ we are given a function $f^i:\gamma^i\rightarrow\R^2$ of class $C^k$ in such a way that $\cup_{i=1}^N f^i(\gamma^i)$ turns out to be a curvilinear polygon of class $C^k$. Let us suppose that each internal angle of $\partial A$ and of $\partial B$ is less than or equal to $\pi$, where $B\subset\R^2$ is the open set whose boundary is given by $\cup_{i=1}^N f^i(\gamma^i)$. Finally, let $g:A\rightarrow B$ be a $C^{k-1}$ function and let $K\subset\o$ be a compact set.
Then there exists a function $f:\bar{A}\rightarrow\R^2$ such that
\begin{itemize}
\item[(i)] $f\in C^{k-1}\bigl(A \cup_{i=1}^N\gamma^i \bigr)$,
\item[(ii)] $f=f^i$ on $\gamma^i$,
\item[(iii)] $f=g$ on $K$.
\end{itemize}  
\end{lemma}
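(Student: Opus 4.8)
The plan is to build $f$ from three local constructions that are glued together: a normal-collar extension of each arc datum $f^i$, an interpolation between adjacent arc extensions across each vertex, and a final cut-off that reconnects the resulting near-boundary map to $g$ in the interior. The decisive simplification is that (i) only asks for $C^{k-1}$ regularity on $A\cup\bigcup_i\gamma^i$, that is, \emph{away from the vertices} $P_i$: this is precisely why a Whitney-type extension (which would force a compatibility of the jets of $f^i$ and $f^{i+1}$ at $P_i$) is neither available nor needed, and it is what makes the corner interpolation below legitimate.

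First I would work away from the vertices. For $\epsilon>0$ small each arc $\gamma^i$ has a one-sided $C^k$ tubular neighborhood with normal coordinates $x=c_i(s)+t\,\nu_i(s)$, $t\in[0,\epsilon)$, where $\nu_i$ is the inner normal; the associated nearest-point projection $p_i$ onto $\gamma^i$ is of class $C^{k-1}$. I would then set $F^i:=f^i\circ p_i$, the extension that is constant along inner normals. Since $f^i\in C^k$ and $p_i\in C^{k-1}$ one has $F^i\in C^{k-1}$, and $F^i=f^i$ exactly on $\gamma^i$ (there $t=0$).

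The core of the argument, and the only place where the hypothesis on the angles enters, is the vertex. Fix a vertex $P_i=\bar\gamma^i\cap\bar\gamma^{i+1}$ and, on a small sector $W_i\subset\bar A$ around it, let $d_i,d_{i+1}$ denote the signed distances to $\gamma^i$ and $\gamma^{i+1}$, positive inside $A$ (these are $C^k$ near the respective arcs). Because the internal angle at $P_i$ is $\le\pi$, the sector $A\cap W_i$ lies in the convex wedge bounded by the two edges, so $d_i,d_{i+1}\ge0$ on $A\cap W_i$, the feet of the inner normals genuinely land on the \emph{open} arcs (hence $F^i,F^{i+1}$ are both defined on $W_i$ for $\epsilon$ small), and $d_i+d_{i+1}>0$ on $W_i\meno\{P_i\}$. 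I would then interpolate by
\[
F:=w\,F^i+(1-w)\,F^{i+1},\qquad w:=\frac{d_{i+1}}{d_i+d_{i+1}}\quad\text{on }W_i.
\]
On $\gamma^i$ one has $d_i=0$, hence $w=1$ and $F=F^i=f^i$; symmetrically $F=f^{i+1}$ on $\gamma^{i+1}$. Off the vertex $w$ is of class $C^k$ (the denominator is positive), so $F\in C^{k-1}$ there; at $P_i$ itself $w\in[0,1]$ while $F^i,F^{i+1}\to f^i(P_i)=f^{i+1}(P_i)$ (the common value forced by $\partial B$ being a curvilinear polygon), so $F$ extends continuously to $P_i$ with no regularity claimed there. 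Had the angle exceeded $\pi$, part of $A$ near $P_i$ would fail to be reached by the inner normals of either arc and $d_i+d_{i+1}$ would vanish along an interior ray, so this step would break down; the twin hypothesis on $\partial B$ plays the analogous role on the target side needed in the application. Patching these sector definitions with the $F^i$ along the arcs (via a partition of unity subordinate to the cover by arc-neighborhoods and vertex-sectors, all of whose members agree with the relevant $F^i$ on $\partial A$, so that (ii) is not spoiled) produces a single $F\in C^{k-1}$ on a neighborhood $\mathcal N$ of $\partial A$ in $\bar A$, continuous up to the vertices, with $F=f^i$ on each $\gamma^i$.

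Finally I would reconnect to $g$. Since $K$ is a compact subset of the open set $A$, $\mathrm{dist}(K,\partial A)>0$, so there is a smooth Urysohn cut-off $\chi\in C^\infty(A)$ with $\chi\equiv1$ on a thin collar of $\partial A$ contained in $\mathcal N$, $\chi\equiv0$ on a neighborhood of $K$, and $\mathrm{supp}\,\chi$ disjoint from $K$. Setting $f:=\chi F+(1-\chi)g$ yields a well-defined map on $\bar A$: on $\partial A$ we have $\chi=1$ and $F=f^i$, giving (ii); on $K$ we have $\chi=0$, giving (iii); and since $\chi\in C^\infty$, $g\in C^{k-1}$, and $F\in C^{k-1}$ on $A\cup\bigcup_i\gamma^i$, the combination is $C^{k-1}$ there, giving (i). The main obstacle throughout is the vertex, and the whole difficulty is absorbed by the convexity hypothesis on the angles together with the deliberate weakening of (i) to exclude the points $P_i$.
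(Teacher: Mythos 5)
Your overall architecture (normal-collar extensions of the $f^i$, an interpolation across each vertex, a final cut-off reconnecting with $g$) parallels the paper's, and the last gluing step is fine; the gap is in the vertex step, which is exactly the point the paper's more elaborate construction is designed to circumvent. You extend each $f^i$ by the nearest-point (inner-normal) projection $p_i$ onto the \emph{open} arc $\gamma^i$ and assert that, because the internal angle at $P_i$ is $\le\pi$, both $F^i=f^i\circ p_i$ and $F^{i+1}=f^{i+1}\circ p_{i+1}$ are defined on the whole sector $W_i\cap A$. This is false as soon as the angle $\theta$ at $P_i$ exceeds $\pi/2$. In the model of a wedge of opening $\theta$ with edges $\gamma^i$ (the positive $x$-axis) and $\gamma^{i+1}$, a point at polar angle $\phi\in(0,\theta)$ has its normal foot on the open arc $\gamma^i$ only if $\phi<\pi/2$, and on $\gamma^{i+1}$ only if $\phi>\theta-\pi/2$; for $\theta>\pi/2$ the subsector $\phi\in(0,\theta-\pi/2)$, which contains $\gamma^i$ itself near $P_i$, lies outside the one-sided collar of $\gamma^{i+1}$, so $F^{i+1}$ --- and hence your convex combination $w\,F^i+(1-w)F^{i+1}$, whose weight $1-w$ is strictly positive there --- is simply undefined on it; the same happens symmetrically near $\gamma^{i+1}$. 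Since the lemma is applied precisely to corners of opening $2\pi/3$ (the triple junction) and $\pi/2$ (where the arcs meet $\partial\Omega$), this is not a borderline pathology but the generic situation, and it prevents you from even writing down, let alone verifying, (i) and (ii) on the arcs near the vertices.

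The paper avoids this by never using normal projections near a corner: for $y$ near a vertex $P$ it projects onto $\gamma^j$ \emph{obliquely}, along a vector field that near $P$ is the (translated) tangent direction of the other curve $\gamma^i$; transversality at the corner, $|\tau^i(P)\cdot\nu^j(P)|>0$, makes this projection well defined and $C^{k-1}$ on the whole corner region, and the pair of oblique projections is then used as a curvilinear coordinate system which is transported to the target side via $(f^i,f^j)$ and inverted there. Your argument could be repaired in the same spirit without the coordinate transport: extend each closed arc $\bar{\gamma}^i$ to a slightly longer $C^k$ curve past $P_i$ and extend $f^i$ to it (possible since $f^i$ is $C^k$ up to the closure), so that the two-sided tubular neighborhood of the extended curve covers the full sector and $d_i$, $d_{i+1}$ become genuine $C^k$ signed distances, strictly positive on the open sector whenever $\theta<\pi$; your interpolation then goes through, with the case $\theta=\pi$ still requiring separate care since the extended curves become tangent at $P_i$. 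As written, however, the vertex step does not work.
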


\begin{proof}
It is clear that the technical difficulties are only due to the presence of the corners in the boundary, because otherwise we can just use a standard extension. Thus the idea is to provide a local extension near each corner point and then to glue it with a standard extension in the rest of the boundary.\\
So, let us concentrate in a neighborhood of a corner point $P$ (that is, in what follows we will always suppose everything to be done in a small ball around $P$), and let us denote by $\gamma^i$ and $\gamma^j$ the two curves meeting in $P$. We are now going to describe a construction of a vector field in a tubular neighborhood of $\gamma^j$ that will be subsequently applied to other (couple of) curves. In what follows the normal vector field to the curve will be the one pointing inside the set $A$ or $B$.

For each point $x\in\gamma^j$ let us consider the curve $\gamma^i+(x-P)$, that is the curve $\gamma^i$ translated in $x$.
By the implicit function theorem there exists $\delta>0$ such that for each $y\in(\gamma^j)_\delta$ (the $\delta$-tubular neighborhood of $\gamma^j$ intersected with the set $A$) there exists a unique point $\pi^j_i(y)\in\gamma^j$ such that $y$ belongs to the curve $\gamma^i+(x-P)$. This is possible because $|\tau^i(P)\cdot\nu^j(P)|>0$ and we are working only in a ball around the point $P$. Notice that the map $y\mapsto \pi^j_i(y)$ is of class $C^{k-1}$.
Let us define the vector field $V^j:(\gamma^j)_\delta\rightarrow\R^2$ by
\[
V^j(y):=\chi_\mu\bigl( \pi^j_i(y) \bigr) \widetilde{\tau}^i(y) +
\bigl(\, 1-\chi_\mu\bigl(\pi^j_i(y)\bigr) \,\bigr)\nu^j(y)\,,
\]
where $\chi_\mu(x):=\chi\Bigl( \frac{|x-P|^2}{\mu^2} \Bigr)$ and with $\widetilde{\tau}^i(y)$ we denote the vector
$\tau^i\bigl(y-(x-P)\bigr)$. By construction, the vector field $V^j$ is of class $C^{k-1}$.
By using the implicit function theorem we obtain the existence of $\delta>0$ such that for each $y\in(\gamma^j)_\delta$ there exist a unique $\pi^V_j(y)\in\gamma^j$ whose trajectory along the vector field $V^j$ passes through $y$. Notice that the map $\pi^V_j$ is of class $C^{k-1}$.


We will apply the above construction, \emph{mutatis mutandis}, also to the curve $\gamma^i$ and to the couple of curves
$f^i(\gamma^i)$ and $f^j(\gamma^j)$ and we will denote by $\widetilde{\pi}^V_i$ and $\widetilde{\pi}^V_j$ respectively the projections on $f^i(\gamma_i)$ and on $f^j(\gamma^j)$. Let $\delta>0$ such that all the above projections are well defined in the $\delta$-tubular neighborhood of each curve. Let us denote by $C^{ij}$ the region of $f(A)$ where both the projections $\widetilde{\pi}^V_j$ and $\widetilde{\pi}^V_i$ are defined. There we can define the projection
$\pi: C^{ij}\rightarrow f^i(\gamma^i)\times f^j(\gamma^j)$ as $\pi(y):=\bigl(\, \widetilde{\pi}^V_i(y),\, \widetilde{\pi}^V_j(y) \,\bigr)$. Notice that $\pi$ is of class $C^{k-1}$.
Finally we denote by $N_\delta^i$ the region of $(\gamma^i)_\delta$ where the vector field $V^i$ is the normal vector field $\nu^i$ and by $N_\delta^j$ the region of $(\gamma^j)_\delta$ where the vector field $V^j$ is the normal vector field $\nu^j$.

We can now define our extension as follows: for $y\in(\gamma^i)_\delta\cup(\gamma^j)_\delta$, let
\begin{align*}
f(y)&:=\chi_\lambda(y)\pi^{-1}\bigl(\, f^i\bigl( \pi^V_i(y)\bigr), f^j\bigl( \pi^V_j(y) \bigr) \bigr) \,\bigr)\\
&\hspace{1cm}+(1-\chi_\lambda(y))\Bigl[ \mathbbm{1}_{N^i_\delta}(y)\bigl[ f^i\bigl( \pi^V_i(y)\bigr) + \mathrm{d}(y,\gamma^i)\widetilde{\nu}^i\bigl( f^i\bigl( \pi^V_i(y)\bigr) \bigr) \bigr]\\
&\hspace{1cm}+ \mathbbm{1}_{N^j_\delta}(y)\bigl[ f^j\bigl( \pi^V_j(y)\bigr) + \mathrm{d}(y,\gamma^j)\widetilde{\nu}^j\bigl( f^j\bigl( \pi^V_j(y)\bigr) \bigr) \bigr]   \Bigr]\,,
\end{align*}
where with $\widetilde{\nu}^j$ and $\widetilde{\nu}^j$ we denote the normal vector fields to $f^i(\gamma^i)$ and $f^j(\gamma^j)$ respectively. Notice that $f$ agrees with $f^i$ and $f^j$ on $\gamma^i$ and $\gamma^j$ respectively.
Moreover, the function $f$ is, by construction, of class $C^{k-1}$.

\begin{figure}[H]
\includegraphics[scale=0.6]{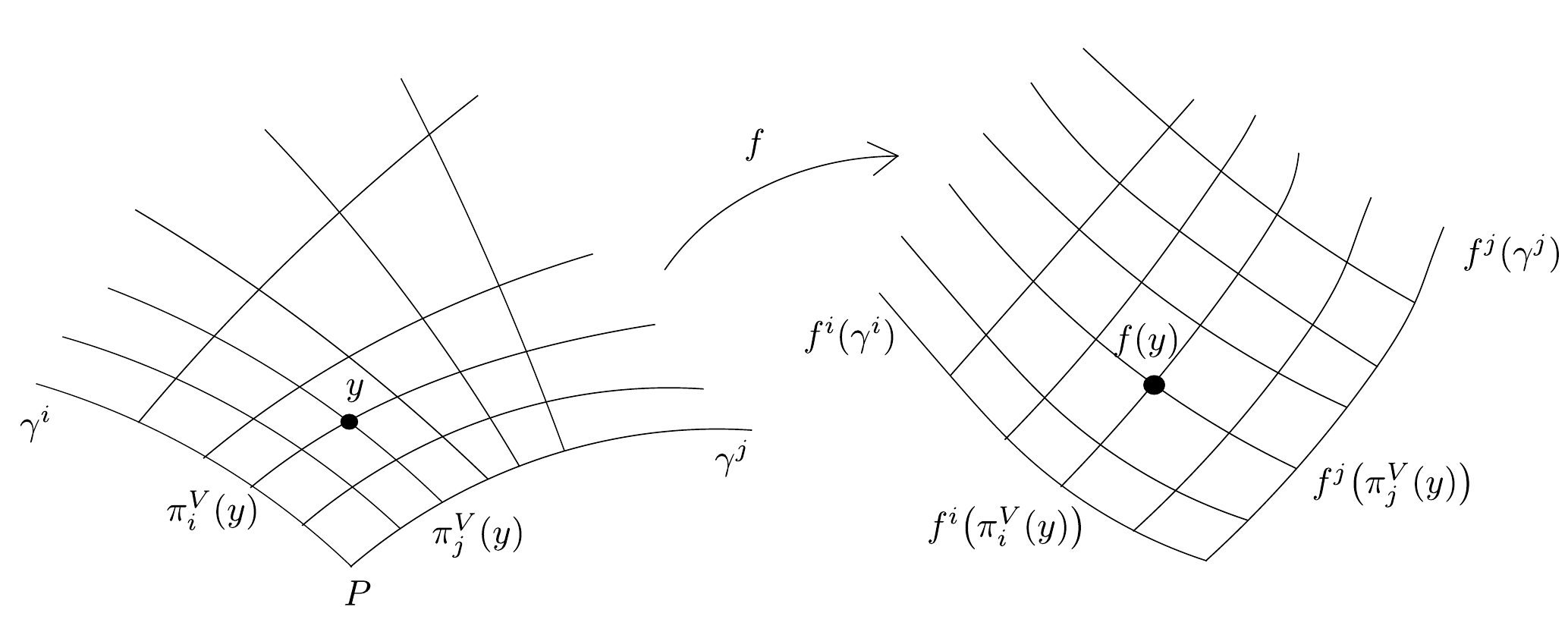}
\caption{The idea of the construction of the extension $f$.}
\label{fig:f}
\end{figure}

We repeat this procedure for all the corner points of $\partial A$, obtaining a function
$\widetilde{f}:(\partial A)_\delta\rightarrow B$.
We have now to glue $f$ with $g$. Let $\bar{\delta}:=\min\{\, \delta, \mathrm{d}(K,\partial A) \,\}$ and take a smooth curve $\widetilde{\gamma}\subset A\cup(\delta A)_{\frac{\bar{\delta}}{2}}$
that does not touch $\partial A$, and let us denote by $\mathrm{d}(\cdot,\widetilde{\gamma})$ the signed distance from $\widetilde{\gamma}$, where  is oriented in such a way that the normal points inside $A$. We then define the function $f$ as
\[
f(y):=\chi\Biggl( \frac{\mathrm{d}^2(y,\widetilde{\gamma})}{\bigl(\frac{\delta}{2}\bigr)^2} \Biggr)\widetilde{f}(y)
+ \Biggl(1-\chi\Biggl( \frac{\mathrm{d}^2(y,\widetilde{\gamma})}{\bigl(\frac{\delta}{2}\bigr)^2} \Biggr)\Biggr)g(y)\,.
\]
\end{proof}


\subsection{Technical results}

We now prove a technical result.

\begin{lemma}\label{lem:3W}
Let $\gamma\subset\o$ be a simple curve of class $C^1$ meeting $\partial\o$ orthogonally in a point $\bar{x}$. Let $X\in C^1(\gamma;\R^2)$ be such that
$X(\bar{x})=\tau_{\partial\o}(\bar{x})$. Then, the vector field defined as
$$
\widetilde{X}:=
\left\{
\begin{array}{ll}
X & \text{ on } \gamma\,,\\
\tau_{\partial\o} & \text{ on } \partial\o\,,
\end{array}
\right.
$$
belongs to $C^1(\bar{\g}\cup\partial\o;\R^2)$.
\end{lemma}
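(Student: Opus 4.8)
The plan is to read $C^1(\bar\gamma\cup\po;\R^2)$ in the sense of Definition~\ref{def:regX} (applied componentwise) and to verify directly the first order Whitney condition on the compact set $X:=\bar\gamma\cup\po$; by the Whitney extension theorem stated above this will even exhibit $\widetilde X$ as the restriction to $X$ of a genuine $C^1$ field on $\R^2$. Away from $\bar x$ there is nothing to do: on $\bar\gamma\setminus\{\bar x\}$ we have $\widetilde X=X\in C^1(\gamma;\R^2)$, while on $\po\setminus\{\bar x\}$ we have $\widetilde X=\tau_{\partial\o}$, which is of class $C^1$ since $\po$ is of class $C^2$, and the only junction of the two curves is $\bar x$. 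Continuity of $\widetilde X$ at $\bar x$ is precisely the hypothesis $X(\bar x)=\tau_{\partial\o}(\bar x)$. Hence the whole question reduces to producing a single, consistent first order jet at $\bar x$.

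This is where orthogonality enters, and it is the conceptual heart of the statement. Write $\tau_\gamma,\nu_\gamma$ for the unit tangent and normal of $\gamma$, and recall that the arc–length derivative $X'$ of $X$ along $\gamma$ and the arc–length derivative $\tau_{\partial\o}'$ of the unit tangent along $\po$ are continuous. Since $\gamma$ meets $\po$ orthogonally at $\bar x$, the pair $\tau_\gamma(\bar x),\tau_{\partial\o}(\bar x)$ is an orthonormal basis of $\R^2$, so there is a \emph{unique} linear map $G(\bar x)$ with
\[
G(\bar x)\,\tau_\gamma(\bar x)=X'(\bar x),\qquad G(\bar x)\,\tau_{\partial\o}(\bar x)=\tau_{\partial\o}'(\bar x),
\]
and this $G(\bar x)$ is simultaneously compatible with the tangential derivative of $\widetilde X$ along $\gamma$ and along $\po$. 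I would then extend $G$ to a continuous matrix field on all of $X$ by choosing on $\bar\gamma$ any continuous $G$ with tangential action $G\tau_\gamma=X'$ and value $G(\bar x)$ at $\bar x$ (the normal component being free), and symmetrically on $\po$; this is possible exactly because the two prescriptions agree at $\bar x$. Had the curves been tangent, the two conditions would constrain the same direction and leave the transverse derivative undetermined, so transversality is indispensable here.

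With $G$ fixed I would verify $\widetilde X(q)-\widetilde X(p)-G(p)(q-p)=o(|q-p|)$ uniformly. For $p,q$ both on $\gamma$ (resp.\ both on $\po$) this follows from the $C^1$ regularity along the curve together with $G\tau_\gamma=X'$ (resp.\ $G\tau_{\partial\o}=\tau_{\partial\o}'$), since to first order $q-p$ is tangent to the curve. The genuinely new, and only delicate, case is a \emph{mixed} pair $p\in\po$, $q\in\gamma$ near $\bar x$, which I would treat by inserting $\bar x$:
\begin{align*}
\widetilde X(q)-\widetilde X(p)-G(p)(q-p)
&=\bigl[\widetilde X(q)-\widetilde X(\bar x)-G(\bar x)(q-\bar x)\bigr]\\
&\quad+\bigl[\widetilde X(\bar x)-\widetilde X(p)-G(p)(\bar x-p)\bigr]\\
&\quad+\bigl[G(\bar x)-G(p)\bigr](q-\bar x).
\end{align*}
The first two brackets are $o(|q-\bar x|)$ and $o(|p-\bar x|)$ by the within–curve estimates, and the third is $o(1)\,|q-\bar x|$ by continuity of $G$ at $\bar x$. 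The crux is again orthogonality: since near $\bar x$ the two curves are $C^1$–close to two perpendicular lines, there is $c>0$ with $|q-p|\ge c\,(|q-\bar x|+|p-\bar x|)$ for $q\in\gamma$, $p\in\po$ close enough to $\bar x$, whence $|q-\bar x|,|p-\bar x|\le C|q-p|$ and all three terms are $o(|q-p|)$. I expect this mixed–pair estimate, resting on the comparability of distances forced by transversality, to be the main obstacle; everything else is routine. Combining the three cases gives the first order Whitney condition on $X$, and hence $\widetilde X\in C^1(\bar\gamma\cup\po;\R^2)$.
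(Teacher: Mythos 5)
Your proposal is correct and follows essentially the same route as the paper: both construct a first-order Whitney jet on $\bar{\gamma}\cup\po$ whose tangential actions along each curve are the given derivatives and which is made consistent at $\bar{x}$ by the orthogonality of the two curves (the paper fixes the free normal component via a cutoff, you leave it as any continuous choice), and both verify condition \eqref{eq:W} for mixed pairs by inserting $\bar{x}$. If anything, your treatment of the mixed pair is the more careful one: the needed inequality $|q-\bar{x}|+|p-\bar{x}|\le C|q-p|$ is the \emph{reverse} of the triangle inequality and genuinely uses transversality, a point the paper's one-line justification glosses over.
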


\begin{proof}
Denote by $\tau$ the tangent vector field on $\gamma$. Define
$$
D\widetilde{X}(x)[\tau(x)]:=DX(x)[\tau(x)]\,,\quad
      D\widetilde{X}(x)[\nu(x)]:=\chi\Bigl( \frac{|x-\bar{x}|^2}{\varepsilon^2} \Bigr)D\tau_{\partial\o}(\bar{x})[\tau_{\partial\o}(\bar{x})]\,,
$$
for $x\in\gamma$, and
$$
D\widetilde{X}(x)[\tau_{\partial\o}(x)]:=D\tau_{\partial\o}(x)[\tau_{\partial\o}(x)]\,,\quad
      D\widetilde{X}(x)[\nu_{\partial\o}(x)]:=\chi\Bigl( \frac{|x-\bar{x}|^2}{\varepsilon^2} \Bigr)DX(\bar{x})[\tau(\bar{x})]\,,
$$
for $x\in\partial\o$, for a constant $\varepsilon>0$. Then condition \eqref{eq:W} is easily satisfied if $x,y\in\gamma$ or $x,y\in\partial\o$. In the case $x\in\gamma$ and $y\in\partial\o$, we simply write $y-x=(y-\bar{x})-(x-\bar{x})$ and we use the triangular inequality we get the desired estimate.
\end{proof}

We now prove the necessity of non-negativity of the quadratic form for local minimizers.

\begin{proof}[Proof of Proposition ~\ref{prop:pos}]\label{sec:prop}
\emph{Step 1}. Suppose the statement holds true for all functions $\p\in\widetilde{H}^1(\g)$ such that
$\p^i\in C^\infty(\bar{\g}_i)$ for $i=1,2,3$. Then the result follows.
Indeed, fix $\p\in\widetilde{H}^1(\g)$ and consider approximation by convolution $(\p^i_\varepsilon)_\varepsilon$ of each $\p_i$ (where we have previously extended each $\p^i$ to an $H^1$ function defined in a regular extension of $\g_i$). Now let $h_\varepsilon:=\bigl( \p^1_\varepsilon+\p^2_\varepsilon+\p^3_\varepsilon \bigr)(x_0)$ and define
$$
\widetilde{\p}^3_\varepsilon=\p^3_\varepsilon-h_\varepsilon\,.
$$
Then $\p_\varepsilon:=(\p^1_\varepsilon, \p^2_\varepsilon, \widetilde{\p}^3_\varepsilon)\in\widetilde{H}^1(\g)$, $\p^i_\varepsilon\rightarrow\p^i$ in $H^1(\g^i)$ for $i=1,2$ and $\widetilde{\p}^3_\varepsilon\rightarrow\p^3$ in $H^1(\g_3)$. By the continuity of the quadratic form $\partial^2\ms\bigl( (u,\g); U \bigr)$ with respect to the $H^1$ convergence, we obtain that
$$
\partial^2\ms\bigl( (u,\g); U \bigr)[\p]=\lim_{\varepsilon\rightarrow0}\partial^2\ms\bigl( (u,\g); U \bigr)[\p_\varepsilon]\geq0\,.\\
$$

\emph{Step 2}. Let $\p\in\widetilde{H}^1(\g)$ such that $\p^i\in C^\infty(\bar{\g}_i)$ for all $i=1,2,3$.
The idea is the following: let $X\in C^2(\bar{\o};\R^2)$ be a vector field such that $X\cdot\nu_{\po}=0$ on $\po$, $X\equiv0$ in $\pdo\cup(\o\meno U)$ and suppose $X\cdot\nu^i=\p^i$ on $\g^i$. Then, by considering the flow
$(\Phi_t)_t$ generated by $X$, we have
\[
0\leq \frac{\mathrm{d}^2}{\mathrm{d} t^2}{\ms\bigl( (u,\g);U \bigr)}_{|t=0}=
           \partial^2\ms\bigl( (u,\g); U \bigr)[\p]\,,
\]
where the inequality follows by the local minimality property of $(u,\g)$.
Our strategy would be to work by approximation, \emph{i.e.}, we will construct a family of vector fields $(X^\varepsilon)_\varepsilon\subset C^2(\bar{\o};\R^2)$ satisfying $X^\varepsilon\cdot\nu_{\po}=0$ on $\po$, $X^\varepsilon\equiv0$ in $\pdo\cup(\o\meno U)$ and such that $X^\varepsilon\cdot\nu^i\rightarrow\p^i$ on $\g^i$
as $\varepsilon\rightarrow0$.

For every $x\in\bar{\g}_i$ define the vector
$$
Y(x):=\p^i(x)\nu_i(x)+b^i(x)\tau_i(x)\,,
$$
for some function $b^i\in C^1(\bar{\g}_i)$, that we have to choose.
The functions $b_i$'s will be chosen in order to satisfy the following conditions:
\begin{itemize}
\item[(i)] $Y(x)=\p^i(x)\nu^i(x)$ for $x\in\g_i\meno B_{\delta}(x_0)$, for some $\delta>0$,
\item[(ii)] $Y$ is well defined in $x_0$,
\item[(iii)] $DY(x_0)[\tau_1+\tau_2+\tau_3]=0$.
\end{itemize}
Thus, we require $b^i(x)\equiv0$ if $|x-x_0|\geq \delta$ in order to satisfy $(i)$ and, in order to obey also $(ii)$ and $(iii)$, we impose the following conditions
$$
\Bigl(\p^i + \frac{1}{2}\p^j - \frac{\sqrt{3}}{2}b^j  \Bigr)(x_0)=0\,,
$$
for all $i\neq j=1,2,3$, and
$$
\sum_{i=1}^3\bigl[ \nu_i D_{\g^i}\p^i + \tau_i D_{\g^i}b^i \bigr](x_0)=0\,,
$$
respectively, where in the derivation of the last one we have used the fact that $H_i(x_0)=0$, and hence $D_{\g^i}\nu_i(x_0)=D_{\g^i}\tau_i(x_0)=0$. Notice that it is possible to choose the $b^i$'s in such a way that all the above conditions are satisfied.

So, choose functions $b^i$'s satisfying the above conditions, and define the vector field $\bar{Y}:\g\cup\po\cup(\o\meno U)\rightarrow\R^2$ as follows:
$$
\bar{Y}:=
\left\{
\begin{array}{ll}
Y(x) & \text{ if } x\in\g\,,\\
a(x)\tau_{\partial\o}(x) & \text{ if } x\in\partial\o\,,\\
0 & \text{ in } \o\meno U\,,
\end{array}
\right.
$$
where $a\in C^1(\partial\o)$ is any function such that $a\equiv1$ in a neighborhood of $\partial \g \cap\partial\o$ and $a\equiv0$ in $\pdo\cup(\bar{U}\cap\partial\o)$.
Applying Lemma ~\ref{lem:3W} we can easily infer that $\bar{Y}\in C^1(\bar{\g}\cup\partial\o\cup(\o\meno U);\R^2)$.
So, by using Whitney's extension theorem, we can extend $\bar{Y}$ to a vector field
$\widetilde{Y}\in C^1(\bar{\o};\R^2)$.

Finally, we need to take care of the regularity of the vector field and of the tangential condition on $\po$.
To do so, by using convolutions, we can approximate $\widetilde{Y}$ with vector fields $\widetilde{X}^\varepsilon\in C^\infty(\bar{\o};\R^2)$ such that $\widetilde{X}^\varepsilon\rightarrow \widetilde{Y}$ in $C^1(\bar{\o};\R^2)$. Notice that $\supp\widetilde{X}^\varepsilon\subset\subset U'\meno\pdo$, where $U'\supset U$ is an admissible subdomain. Now define
$$
X^{\varepsilon}(x):=
\left\{
\begin{array}{ll}
\widetilde{X}^\varepsilon(x)-\chi\Bigl(\frac{s^2}{\delta^2}\Bigr)\bigl( (\widetilde{X}^\varepsilon\cdot\nu_{\partial\o})\nu_{\partial\o} \bigr)(y) & \text{ if } x=y+s\nu_{\partial\o}(y)\,,\, s<\delta,\\
\widetilde{X}^\varepsilon & \text{ otherwise}.\\
\end{array}
\right.
$$
In this way $X^{\varepsilon}\cdot\nu_{\partial\o}=0$ on $\partial\o$, and we still have that $X^{\varepsilon}\rightarrow\widetilde{Y}$ in $C^1(\bar{\o};\R^2)$.
In particular we have that $\p^\varepsilon_i:=X^{\varepsilon}\cdot\nu^i\rightarrow X\cdot\nu^i=\p^i$ on $\g^i$.
This allows to conclude. Indeed, let $(\Phi^\varepsilon)_t$ be the flow generated by the vector field $X^\varepsilon$, with $\Phi_0^\varepsilon=\id$, and
let $\g^\varepsilon_t:=\Phi^\varepsilon_t(\g)$ be the evolution of $\g$ through this flow.
Then, we have that
\begin{align*}
\partial^2\ms\bigl( (u,\g); U \bigr)[\p] &= 
   \lim_{\varepsilon\rightarrow0} \partial^2\ms\bigl( (u^\varepsilon,\g^\varepsilon); U \bigr)[\p^\varepsilon]\\
&= \lim_{\varepsilon\rightarrow0} \frac{\mathrm{d}^2}{\mathrm{d} t^2}{\ms\bigl( (u^\varepsilon,\g^\varepsilon);U \bigr)}_{|t=0} \geq 0\,,
\end{align*}
where in the first equality we have used the continuity of $\partial^2\ms$.
\end{proof}


\bigskip
\bigskip
\noindent
{\bf Acknowledgments.}
{The author wishes to thank Massimiliano Morini for introduced me to the study of this problem and for multiple helpful discussions we had during the preparation of this paper.

The author thanks SISSA and the Center for Nonlinear Analysis at Carnegie Mellon University for their support
during the preparation of the manuscript.
The research was partially supported by National Science Foundation under Grant No. DMS-1411646.
The results contained in the paper are part of the Ph.D. thesis of the author.
}


\bibliographystyle{siam} 
\bibliography{bibliografia}

\end{document}